\newtheorem{thm}{Theorem}[section]
\newtheorem{cor}[thm]{Corollary}
\newtheorem{lem}[thm]{Lemma}
\newtheorem{proposition}[thm]{Proposition}
\newtheorem{example}[thm]{Example}
\theoremstyle{definition}
\newtheorem{definition}[thm]{Definition}
\theoremstyle{remark}
\newtheorem{remark}[thm]{Remark}
\newcommand{\dl}{\displaystyle}
\newcommand{\subjclass}[2][1991]{%
  \let\@oldtitle\@title%
  \gdef\@title{\@oldtitle\footnotetext{#1 \emph{Mathematics subject classification}: #2}}%
}
\newcommand{\keywords}[1]{%
  \let\@@oldtitle\@title%
  \gdef\@title{\@@oldtitle\footnotetext{\emph{Keywords}: #1.}}%
}
\title{Generalized Derivations and Rota-Baxter Operators of $n$-ary Hom-Nambu Superalgebras}%
\author{Sami Mabrouk$^1$, Othmen Ncib$^1$, Sergei Silvestrov$^2$  \\
\footnotesize{$^1$Faculty of Sciences, University of Gafsa, BP 2100, Gafsa, Tunisia \authorcr
mabrouksami00@yahoo.fr, othmenncib@yahoo.fr   \authorcr
$^2$Division of Applied Mathematics, School of Education, Culture and Communication, \authorcr
M\"{a}lardalen University,
Box 883, 72123 V\"{a}ster{\aa}s, Sweden \authorcr
sergei.silvestrov@mdh.se}}
\subjclass[2010]{17A30,17A36,17A40,17A42}
\keywords{Hom-Lie superalgebras, $n$-ary Nambu superalgebras, $n$-ary Hom-Nambu superalgebras, $n$-Hom-Lie superalgebras, derivations, quasiderivations, Rota-Baxter operators, $3$-Hom-pre-Lie algebras}
\date{}
\begin{document}

\maketitle

\begin{abstract}
The aim of this paper is to generalise the construction of $n$-ary Hom-Lie bracket by means of an $(n-2)$-cochain of given Hom-Lie algebra to super case inducing a $n$-Hom-Lie superalgebras. We study the notion of generalized derivation and Rota-Baxter operators of $n$-ary Hom-Nambu and $n$-Hom-Lie superalgebras and their relation with generalized derivation and Rota-Baxter operators  of Hom-Lie superalgebras. We also introduce the notion of $3$-Hom-pre-Lie superalgebras which is the generalization of $3$-Hom-pre-Lie algebras.
\end{abstract}


\section*{Introduction}

Hom-Lie algebras and more general quasi-Hom-Lie algebras were introduced first by Hartwig, Larsson and Silvestrov in \cite{HartLarsSilv20032006:DefLiealgsigmaderiv}, where
the general quasi-deformations and discretizations of Lie algebras of vector fields using more general $\sigma$-derivations (twisted derivations) and a general method for construction of deformations of Witt and Virasoro type algebras based on twisted derivations have been developed.
The general quasi-Lie algebras and the subclasses of quasi-Hom-Lie algebras and Hom-Lie algebras and their more general color Hom-algebra counterparts as well as corresponding general quasi-Leibniz algebras, and thus also Hom-Leibniz algebras in context of Hom-Lie algebras, have been introduced in \cite{HartLarsSilv20032006:DefLiealgsigmaderiv,LarssonSilvJA2005:QuasiHomLieCentExt2cocyid,LarssonSilv2005:QuasiLieAlg,LarssonSilv:GradedquasiLiealg,SigSilv2006:gradedquasiLiealgWitt}.
In \cite{MakhSilv:homstructure}, the Hom-associative algebras have been introduced and shown to be Hom-Lie admissible, in the usual sense of leading to Hom-Lie algebras using commutator map as new product, thus constituting a natural generalization of associative algebras known to be Lie admissible algebras in the same sense of yielding Lie algebras using the commutator product. Moreover, in \cite{MakhSilv:homstructure}, more general $G$-Hom-associative algebras including Hom-associative algebras, Hom-Vinberg algebras (Hom-left symmetric algebras), Hom-pre-Lie algebras (Hom-right symmetric algebras) and some other Hom-algebra structures, were introduced and shown to be Hom-Lie admissible. Also, flexible Hom-algebras have been introduced and some connections to some Hom-algebra generalizations of derivations and of adjoint maps have been noticed, and the variety of $n$-dimensional Hom-Lie algebras have been considered and some classes of low-dimensional Hom-Lie algebras have been described. Since the pioneering works \cite{HartLarsSilv20032006:DefLiealgsigmaderiv,LarssonSilvJA2005:QuasiHomLieCentExt2cocyid,LarssonSilv2005:QuasiLieAlg,LarssonSilv:GradedquasiLiealg,LarssonSilv:QuasidefSl2,LarssonSigSilvJGLTA2008:QuasiLiedefFttN,MakhSilv:homstructure,RichardSilvJA2008:quasiLiesigderCtpm1,SigSilv:GLTbdSpringer2009}, Hom-algebra structures have developed in a popular broad area with increasing number of publications in various directions.
In Hom-algebra structures, defining algebra identities are twisted by linear maps. Hom-algebra structures of a given type include their classical counterparts and open more possibilities for deformations, Hom-algebra extensions of cohomological structures and representations, formal deformations of Hom-associative and Hom-Lie algebras,
Hom-Lie admissible Hom-coalgebras, Hom-coalgebras, Hom-Hopf algebras \cite{AmmarEjbehiMakhlouf:homdeformation,BenMakh:Hombiliform,LarssonSilvJA2005:QuasiHomLieCentExt2cocyid,MakhSil:HomHopf,MakhSilv:HomAlgHomCoalg,MakhSilv:HomDeform,Sheng:homrep,Yau:HomHom,Yau:HomEnv}.

The $n$-Lie algebras found their applications in many fields of Mathematics and Physics.
Ternary Lie algebras appeared first in Nambu generalization of Hamiltonian
mechanics \cite{Nambu:GenHD} using ternary bracket generalization of Poisson algebras.
The algebraic foundations of Nambu mechanics and foundations of
the theory of Nambu-Poisson manifolds have been developed in the works of Takhtajan and Daletskii in  \cite{DalTakh:leibnizLiealgNambualg,Takhtajan:foundgenNambuMech,Takhtajan:cohomology}. Filippov, in \cite{Filippov:nLie} introduced $n$-Lie algebras. Further properties, classification, and connections to other structures such as bialgebras, Yang-Baxter equation and Manin triples for $3$-Lie algebras of $n$-ary algebras were studied in
\cite{BaiRBaiCWang:realizations3Liealg,BaiGuoSheng:BialgYangBaxtereqManintri3Liealg,BaiWuLiZhou:Constrnplus1nLiealg,BaiRSongZhang:nLieclas,BaiRWangXiaoAn:nLieclaschar2,BaiRAnLi:CentStrnLiealg,BaiRChenmeng:FrattinisubalgnLiealg,BaiRMengD:CentrextnLiealg,BaiRMengD:CentrnLiealg,BaiRZhangLiShi:Innerderivalgnplus1dimnLiealg,Kasymov:nLie}.
Hom-algebra generalization of $n$-ary algebras, such as $n$-Hom-Lie algebras and other $n$-ary Hom algebras of Lie type and associative type, were introduced in \cite{AtMaSi:GenNambuAlg}, by twisting the defining identities using a set of linear maps.
A way to generate examples of such $n$-ary Hom-algebras from $n$-ary algebras of the same type has been described.
Representations and cohomology of $n$-ary multiplicative Hom-Nambu-Lie algebras have been considered in \cite{AmmarSamiMakhloufNov2010}.
Further properties, construction methods, examples, cohomology and central extensions of $n$-ary Hom-algebras have been considered in
\cite{ArlindKitMakhSilv:trucCohom3LiealgindLiealg,ArnMakhSilv:ternHomNambuLiealgindHomLie,ArnMakhSilv:ConstrnLienaryHomNambuLie,km:nary,kms:nhominduced,Yau:HomHom,Yau:HomassnaryHomNambualg,Yau:HomNambuHomMalcev,Yau:HomNambuLie}.
These generalizations include $n$-ary Hom-algebra structures generalizing the $n$-ary algebras of Lie type including $n$-ary Nambu algebras, $n$-ary Nambu-Lie algebras and $n$-ary Lie algebras, and $n$-ary algebras of associative type including $n$-ary totally associative and $n$-ary partially associative algebras.

The construction of $(n+1)$-Lie algebras induced by $n$-Lie algebras using combination of bracket multiplication with a trace, motivated by the work of Awata et al. \cite{almy:quantnambu} on the quantization of the Nambu brackets, was generalized using the brackets of general Hom-Lie algebra or $n$-Hom-Lie algebra and trace-like linear forms satisfying some conditions depending on the linear maps defining the Hom-Lie or $n$-Hom-Lie algebras in \cite{ArnMakhSilv:ternHomNambuLiealgindHomLie,ArnMakhSilv:ConstrnLienaryHomNambuLie}. The structure of $3$-Lie algebras induced by Lie algebras, classification of $3$-Lie algebras and application to constructions of B.R.S. algebras have been considered in \cite{Abramov2018:WeilAlg3LiealgBRSalg,AbramovLatt2016:classifLowdim3Liesuperalg,Abramov2017:Super3LiealgebrasinducedsuperLiealg}.
Interesting constructions of ternary Lie superalgebras in connection to superspace extension of Nambu-Hamilton equation is considered in \cite{AbramovLatt2020TernaryLieSuper}. In \cite{AbdelkhaderSamiOthmen}, a method was demonstrated of how to construct $n$-ary multiplications from the binary multiplication of a Hom-Lie algebra and a $(n-2)$-linear function satisfying certain compatibility conditions. Solvability and Nilpotency for $n$-Hom-Lie Algebras and
$(n+1)$-Hom-Lie Algebras Induced by $n$-Hom-Lie Algebras have been considered in \cite{kms:solvnilpnhomlie2020}.
In \cite{CasasLodayPirashvili:Leibniznalg}, Leibniz $n$-algebras have been studied.
The general cohomology theory for $n$-Lie algebras and Leibniz $n$-algebras was established in \cite{Rotkiewicz:cohomringnLiealg}.
The structure and classification of finite-dimensional $n$-Lie algebras were considered in \cite{LingW:StrnLiealgPhThes1993} and many
other authors. For more details of the theory and applications of $n$-Lie algebras, see \cite{DeAzcarragaIzquierdo:nAryalgrevappl}
and references therein.

Derivations and generalized derivations of different algebraic structures are an important subject of study in algebra and diverse areas. They appear in many fields of Mathematics and Physics. In particular, they appear in representation theory and cohomology theory among other areas. They have various applications relating algebra to geometry and allow the construction of new algebraic structures.  There are many generalizations of derivations. For example, Leibniz derivations \cite{KaygorodovPopov2014:Altalgadmderiv} and $\delta$-derivations of prime Lie and Malcev algebras \cite{Filippov1998:deltaderivLiealg,Filippov1999:deltaderivprimeLiealg,Filippov2000:deltaderivprimealternMalcevalg}.
The properties and structure of generalized derivations algebras of a Lie algebra and their subalgebras and quasi-derivation algebras were systematically studied in \cite{LegerLuks:GenDerivLiealg}, where it was proved for example that the quasi-derivation algebra of a Lie algebra can be embedded into the derivation algebra of a larger Lie algebra. Derivations and generalized derivations of $n$-ary algebras were considered in \cite{PojidaevSaraiva:Derivternmalcevalg,Williams:NilpnLiealg} and it was demonstrated substantial differences in structures and properties of derivations on Lie algebras and on $n$-ary Lie algebras for $n>2$. Generalized derivations of Lie superalgebras have been considered in \cite{ZhangZhang:GenDerLieSuperalg}.
Generalized derivations of Lie color algebras and $n$-ary (color) algebras have been studied in \cite{ChenMaNi:GenDerLieColAlg, Kaygorodov2012:deltaDerivnaryalg, Kaygorodov2011:nplus1Aryderivsimplenaryalg, Kaygorodov2014:nplus1AryderivsemisimpleFilipovalg, KaygorodovPopov2016:GeneralizedderivcolornLiealg}. Generalized derivations of Lie triple systems have been considered in \cite{ChenMaZhou:GenDerLieTripSyst}. Generalized derivations of various kinds can be viewed as a generalization of $\delta$-derivation. Quasi-Hom-Lie and Hom-Lie structures for $\sigma$-derivations and $(\sigma,\tau)$-derivations have been considered in \cite{ElchLundMakhSilv:bracktausigmaderiv,HartLarsSilv20032006:DefLiealgsigmaderiv,LarssonSilv:QuasidefSl2,RichardSilvJA2008:quasiLiesigderCtpm1,RichardSilvestrovGLTMPBSpr2009:QuasiLieHomLiesigmaderiv}.
Graded $q$-differential algebra and applications to semi-commutative Galois Extensions and Reduced Quantum Plane and $q$-connection was studied in \cite{AbramovJNMPh2006:gradedqdifalg,AbramovGLTMPBSpr2009:GradedqDiffAlgqConnect,AbramovRaknuz2016EngMathSpr:SemicomGaloisExtRedQPl}.
Generalized $N$-complexes coming from twisted derivations where considered in \cite{LarssonSilvestrovGLTMPBSpr2009:GenNComplTwistDer}.

Generalizations of derivations in connection with extensions and enveloping algebras of Hom-Lie color algebras have been considered in \cite{ArmakanSilvFarh:envelopalgcolhomLiealg,ArmakanSilvFarh:exthomLiecoloralg,BakyokoSilvestrov:MultiplicnHomLiecoloralg}. Generalized derivations of multiplicative $n$-ary Hom-$\Omega$ color algebras have been studied in \cite{BeitesKaygorodovPopov}. Derivations, $L$-modules, $L$-comodules and Hom-Lie quasi-bialgebras have been considered in \cite{Bakayoko:LmodcomodhomLiequasibialg,Bakayoko:LaplacehomLiequasibialg}. In \cite{kms:narygenBiHomLieBiHomassalgebras2020}, constructions of $n$-ary generalizations of BiHom-Lie algebras and BiHom-associative algebras have been considered. Generalized Derivations of $n$-BiHom-Lie algebras have been studied in \cite{BenAbdeljElhamdKaygorMakhl201920GenDernBiHomLiealg}. Color Hom-algebra structures associated to Rota-Baxter operators have been considered in context of Hom-dendriform color algebras in \cite{BakyokoSilvestrov:HomleftsymHomdendicolorYauTwi}. Rota-Baxter bisystems and covariant bialgebras, Rota-Baxter cosystems, coquasitriangular mixed bialgebras, coassociative Yang-Baxter pairs, coassociative Yang-Baxter equation and generalizations of Rota-Baxter systems and algebras, curved $\mathcal{O}$-operator systems and their connections with (tri)dendriform systems and pre-Lie algebras have been considered in \cite{MaMakhSil:CurvedOoperatorSyst,MaMakhSil:RotaBaxbisyscovbialg,MaMakhSil:RotaBaxCosyCoquasitriMixBial}.
Generalisations of derivations are important for Hom-Gerstenhaber algebras, Hom-Lie algebroids and Hom-Lie-Rinehart algebras and Hom-Poisson homology
\cite{MishraSilv:RevHomGerstalgHomLiealg}.

This paper is organized as follows. In Section \ref{sec:preliminaries} we review basic concepts of Hom-Lie, $n$-ary Hom-Nambu superalgebras and $n$-Hom-Lie algebras. We also recall some examples and classification of Hom-Lie superalgebras of dimension two. We recall  the definition of generalized  derivations of $n$-Hom-Lie superalgebras and $n$-ary Hom-Nambu superalgebras. In Section \ref{sec:nhomLiesuperindhomLiesuper} we provide a construction procedure of $n$-Hom-Lie
superalgebras starting from a binary bracket of a Hom-Lie superalgebra and multilinear form satisfying certain
conditions. To this end, we give the relation between generalized derivations  of Hom-Lie superalgebra and generalized derivations $n$-Hom-Lie algebras. In Section \ref{sec:naryhomnambusuperindHomLiesuper}, we provide a construction for $n$-ary Hom-Nambu algebra using Hom-Lie algebra. In Section \ref{sec:rotabaxternaryhomNambusuperalg} the notion of Rota-Baxter operators of $n$-ary Hom-Nambu superalgebras are introduced and some results obtained. Finally, we give the definition of $3$-Hom-pre-Lie superalgebras generalizing $3$-Hom-pre-Lie algebras in graded case.

\section{Preliminaries on $n$-ary Hom-Lie algebras and Hom-Lie superalgebras} \label{sec:preliminaries}

Throughout this paper, we will for simplicity of exposition assume that $\mathbb{K}$ is an algebraically closed
field of characteristic zero, even though for most of the general definitions and results in the paper this
assumption is not essential.

Let $V= V_{\overline 0}\oplus V_{\overline 1}$ be a finite-dimensional $\mathbb{Z}_2$-graded linear space.
Let $\mathcal{H}(\mathfrak {g})= V_{\overline 0} \cup V_{\overline 1}$ denote the set of homogeneous elements of $\mathfrak{g}$.
If $v \in V$ is a homogenous element, then its degree will be denoted by $|v|$,
where $|v|\in \mathbb{Z}_2$ and $\mathbb{Z}_2=\{\overline 0,\overline 1\}$. Let $End(V)$ be the $\mathbb{Z}_2$-graded linear space of
endomorphisms of a $\mathbb{Z}_2$-graded linear space $V= V_{\overline 0}\oplus V_{\overline 1}$ . The composition of
two endomorphisms $a\circ b$ determines the structure of superalgebra in $End(V)$,
and the graded binary commutator $[a, b] = a\circ b - (-1)^{|a||b|}b \circ a$ induces the
structure of Lie superalgebras in $End(V)$.
\subsection{Definitions and Notations}

\begin{definition}
A Hom-Lie superalgebra is a $\mathbb{Z}_2$-graded linear space $\mathfrak{g} = \mathfrak{g}_0\oplus \mathfrak{g}_1$ over a field $\mathbb{K}$
equipped with an even bilinear map $[\cdot,\cdot] : \mathfrak{g}\times \mathfrak{g} \to \mathfrak{g}$, (meaning that $[\mathfrak{g}_i,\mathfrak{g}_j ]
 \subset \mathfrak{g}_{i+j} ,\;\forall i, j\in \mathbb{Z}_2$) and an even linear
map $\alpha: \mathfrak{g}\to\mathfrak{g}$ (meaning that $\alpha(\mathfrak{g}_i)\subseteq \mathfrak{g}_i, \forall i\in \mathbb{Z}_2$).
\begin{eqnarray*}
[x,y] = -(-1)^{|x||y|} [y,x] &&  \text{(super-skew-symmetry)}  \\
\displaystyle\circlearrowleft_{x,y,z}(-1)^{|x||z|}[\alpha(x),[y,z]]=0 && \text{(super-Hom-Jacobi identity)}
\end{eqnarray*}
for all $x, y ,z  \in\mathcal{H}(\mathfrak g)$, where $\displaystyle\circlearrowleft_{x,y,z}$ denotes summation over the cyclic permutations of $x, y ,z$.
\end{definition}
\begin{definition}
  A Hom-Lie superalgebra $(\mathfrak{g},[\cdot,\cdot],\alpha)$ is called multiplicative if $\alpha([x,y])=[\alpha(x),\alpha(y)]$ for all $x,y\in \mathfrak{g}$.
\end{definition}
For any $x\in\mathfrak{g}$, define $ad_x\in End_{\mathbb{K}}(\mathfrak{g})$
by $ad_x(y) = [x, y]$, for any $y \in \mathfrak{g}$. Then the super-Hom-Jacobi identity can be written as
\begin{equation}\label{RepresentationAdjointe}
\text{ad}_{[x,y]}(\alpha(z))=\text{ad}_{\alpha(x)}\circ \text{ad}_y(z)- (-1)^{|x||y|}\text{ad}_{\alpha(y)}\circ \text{ad}_x(z)
\end{equation}
for all $\ x,y,z\in\mathcal{H}(\mathfrak{g})$.
\begin{remark}
An ordinary Lie superalgebra is a Hom-Lie superalgebra when $\alpha=id$.
\end{remark}
\begin{example}[\cite{AbdaouiMabroukMakhlouf}]
Let $\mathcal{A}$ be the complex superalgebra $\mathcal{A} = \mathcal{A}_0\oplus \mathcal{A}_1$ where $\mathcal{A}_0 = \mathbb{C}[t, t^{-1}]$ is the Laurent polynomials
in one variable and $\mathcal{A}_1 = \theta \mathbb{C}[t, t^{-1}]$, where $\theta$ is the Grassman variable $(\theta^2 = 0)$. We assume that $t$
and $\theta$ commute. The generators of $\mathcal{A}$ are of the form $t^n$ and $\theta t^n$ for $n \in \mathbb{Z}$.
For $q\in \mathbb{C}\backslash\{0, 1\}$ and $n\in  \mathbb{N}$, we set $\{n\} = \frac{1-q^n}{1-q}$, a $q$-number. The $q$-numbers have the following
properties $$\{n + 1\} = 1 + q\{n\} = \{n\} + q^n \text{\, and\, } \{n + m\} = \{n\} + q^n\{m\}.$$

Let $\mathfrak{A}_q$  be a superspace with basis $\{L_m,\ I_m| m\in\mathbb{Z}\}$ of parity $0$ and $\{ G_m,\ T_m| m\in\mathbb{Z}\}$ of parity $1$, where $L_m=-t^mD,\ I_m=-t^m,\ G_m=-\theta t^mD,\ T_m=-\theta t^m$ and $D$ is a $q$-derivation on $\mathcal{A}$ such that
$$D(t^m)=\{m\}t^m,\ D(\theta t^m)=\{m+1\}\theta t^m.$$
We define the bracket  $[\cdot,\cdot]_q:\mathfrak{A}_q\times\mathfrak{A}_q\to\mathfrak{A}_q$, with respect the super-skew-symmetry for $n,m\in\mathbb{Z}$  by
\begin{align}
&\label{crochet1}[L_m,L_n]_q=(\{m\}-\{n\})L_{m+n},\\
&\label{crochet2}[L_m,I_n]_q=-\{n\}I_{m+n},\\
&\label{crochet3}[L_m,G_n]_q=(\{m\}-\{n+1\})G_{m+n},\\
&\label{crochet4}[I_m,G_n]_q=\{m\}T_{m+n},\\
&\label{crochet5}[L_m,T_n]_q=-\{n+1\}T_{m+n},\\
&\label{crochet6}[I_m,I_n]_q=[I_m,T_n]_q=[T_m,G_n]_q=[T_m,T_n]_q=[G_m,G_n]_q=0.
\end{align}
Let $\alpha_q$ be an even linear map on $\mathfrak{A}_q$  defined on the generators by
\begin{eqnarray*}
\alpha_q(L_n)&=&(1+q^n)L_n,\hskip0.5cm \alpha_q(I_n)=(1+q^n)I_n,\\
\alpha_q(T_n)&=&(1+q^{n+1})G_n,~~\alpha_q(T_n)=(1+q^{n+1})T_n.
\end{eqnarray*}
The triple $(\mathfrak{A}_q, [\cdot,\cdot]_q, \alpha_q)$ is a Hom-Lie superalgebra, called
{\it $q$-deformed Heisenberg-Virasoro superalgebra of Hom-type}.
\end{example}
\begin{example} \label{Sl2HomLieExemple}
In {\rm \cite{AmmarMakhloufJA2010}},
the authors construct an example of Hom-Lie superalgebra, which is not
a Lie superalgebra starting from the orthosymplectic Lie superalgebra.
We consider in the sequel the matrix realization of this Lie superalgebra.

Let $osp(1,2)=V_0 \oplus V_1$  \ be  the Lie superalgebra where
$V_0$ is generated by:
$$ H=\left(
  \begin{array}{ccc}
  1 & 0& 0 \\
  0 &0 & 0 \\
    0 & 0 & -1\\
  \end{array}\right), \ \ X=\left(
  \begin{array}{ccc}
  0 & 0 & 1 \\
  0 & 0 & 0 \\
  0 & 0 & 0\\
  \end{array}
\right),\ \ Y=\left(
  \begin{array}{ccc}
  0 & 0& 0 \\
  0 & 0 & 0 \\
  1 & 0 & 0\\
  \end{array}
\right), $$ and $V_1$ is generated by
$$ F=\left(
  \begin{array}{ccc}
  0 & 0 & 0 \\
  1 & 0 & 0 \\
  0 & 1 & 0\\
  \end{array}
\right) , \ \ G=\left(
  \begin{array}{ccc}
  0 & 1& 0 \\
  0 & 0 & -1 \\
  0 & 0 & 0\\
  \end{array}
\right) .$$
Those of the defining relations that have nonzero elements in the right-hand side are
$$[H,X]=2X, \ [H,Y]=-2Y,\ [X,Y]=H,$$
$$[Y,G]=F,\ [X,F]=G,\ [H,F]=-F, \ [H,G]=G,$$
$$\ [G,F]=H, \ [G,G]=-2X,\ [F,F]=2Y. $$
 Let $\lambda \in \mathbb{R}^*$, we consider the linear map
$\alpha_{\lambda}: osp(1,2)\to osp(1,2)$ defined by:
\begin{eqnarray*}
\alpha_{\lambda}(X)=\lambda^2 X, \ \ \alpha_{\lambda}(Y)=\frac{1}{\lambda^2}Y, \ \ \alpha_{\lambda}(H)=H,\\
\alpha_{\lambda}(F)=\frac{1}{\lambda}F, \ \ \alpha_{\lambda}(G)=\lambda G.
\end{eqnarray*}
We provide a family of Hom-Lie superalgebras
$osp(1,2)_\lambda=(osp(1,2),[\cdot,\cdot]_{\alpha_\lambda},\alpha_\lambda)$, where the Hom-Lie superalgebra bracket
$[\cdot,\cdot]_{\alpha_\lambda}$ on the basis elements is given, for
$\lambda\neq 0$, by:
$$[H,X]_{\alpha_\lambda}=2\lambda^2 X,\ \
[H,Y]_{\alpha_\lambda}=-\frac{2}{\lambda^2}Y, \ \
[X,Y]_{\alpha_\lambda}=H,$$ $$
[Y,G]_{\alpha_\lambda}=\frac{1}{\lambda}F,\ \
[X,F]_{\alpha_\lambda}=\lambda G,\ \
[H,F]_{\alpha_\lambda}=-\frac{1}{\lambda}F,\ \
[H,G]_{\alpha_\lambda}=\lambda G,$$ $$ [G,F]_{\alpha_\lambda}=H,\ \
[G,G]_{\alpha_\lambda}=-2\lambda^2X,\ \
[F,F]_{\alpha_\lambda}= \frac{2}{\lambda^2}Y. $$
These Hom-Lie superalgebras are not Lie superalgebras for $\lambda\neq 1$.
 \end{example}
 \begin{thm}[\cite{WangZhangWei2016:claslowdimmulthomliesuperalg}]
 \label{classification2dimension}
   Every $2$-dimensional multiplicative Hom-Lie superalgebra $(\mathfrak{g}=\mathfrak{g}_0\oplus\mathfrak{g}_1,[\cdot,\cdot],\alpha)$ generated by $\{e_1,e_2\}$ is isomorphic to one of the following nonisomorphic Hom-Lie superalgebras. Each algebra is denoted by $\mathfrak{g}^k_{i,j}$, where $i$ is the dimension of $\mathfrak{g}_0$, $j$ is the dimension of
$\mathfrak{g}_1$, $k$ is the number.
\begin{enumerate}[label=\rm{\arabic*)}]
                                      \item $\mathfrak{g}^1_{0,2}$: is an abelian Hom-Lie superalgebra.
                                      \item $\mathfrak{g}^2_{1,1}$: is an abelian Hom-Lie superalgebra.
                                      \item $\mathfrak{g}^3_{1,1}$: $[e_0,e_1]=e_1$, $[e_1,e_1]=0$ and $\alpha=\left(
                                                                                                                \begin{array}{cc}
                                                                                                                  1 & 0 \\
                                                                                                                  0 & a \\
                                                                                                                \end{array}
                                                                                                              \right)
                                      $, $a\in \mathbb{K}$.
                                      \item $\mathfrak{g}^4_{1,1}$:$[e_0,e_1]=e_1$, $[e_1,e_1]=0$ and $\alpha=\left(
                                                                                                                \begin{array}{cc}
                                                                                                                  a & 0 \\
                                                                                                                  0 & 0 \\
                                                                                                                \end{array}
                                                                                                              \right)
                                      $, $a\neq0,1$.
                                      \item $\mathfrak{g}^5_{1,1}$: $[e_0,e_1]=0$, $[e_1,e_1]=e_0$ and $\alpha=\left(
                                                                                                                \begin{array}{cc}
                                                                                                                  a^2 & 0 \\
                                                                                                                  0 & a \\
                                                                                                                \end{array}
                                                                                                              \right)
                                      $, $a\neq0$.
                                    \end{enumerate}
 \end{thm}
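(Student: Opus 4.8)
The plan is to prove the classification by an exhaustive case analysis on the grading type $(i,j)=(\dim\mathfrak{g}_0,\dim\mathfrak{g}_1)$, subject to $i+j=2$. First I would fix a homogeneous basis adapted to the grading and write down the most general even super-skew-symmetric bracket together with the most general even map $\alpha$; since $\alpha$ preserves the grading it is block-diagonal, hence literally diagonal whenever each graded piece is one-dimensional. Because the bracket is even, $[\mathfrak{g}_i,\mathfrak{g}_j]\subseteq\mathfrak{g}_{i+j}$, which immediately annihilates most products. In the type $(0,2)$ every bracket of two odd elements would land in $\mathfrak{g}_0=0$, so the algebra is forced to be abelian and yields $\mathfrak{g}^1_{0,2}$ at once; the purely even type $(2,0)$ is just a $2$-dimensional Hom-Lie algebra, leaving $(1,1)$ as the substantive case.

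For type $(1,1)$ I would write $\mathfrak{g}_0=\mathbb{K}e_0$, $\mathfrak{g}_1=\mathbb{K}e_1$, $\alpha(e_0)=a\,e_0$, $\alpha(e_1)=b\,e_1$. Super-skew-symmetry forces $[e_0,e_0]=0$ (char zero) and leaves exactly two free structure constants, $[e_0,e_1]=c\,e_1$ and $[e_1,e_1]=d\,e_0$. I would then impose the two remaining axioms. Evaluating the super-Hom-Jacobi identity on the only homogeneous triples that do not vanish by degree, namely $(e_1,e_1,e_1)$ and $(e_0,e_1,e_1)$, both collapse to the single relation $bcd=0$. Multiplicativity applied to $[e_0,e_1]$ and to $[e_1,e_1]$ gives $bc(a-1)=0$ and $d(a-b^2)=0$, respectively. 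Thus the entire problem reduces to solving this small system in $(a,b,c,d)$.

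Next I would solve the system and read off normal forms by splitting on whether $c$ and $d$ vanish. The branch $c=d=0$ is the abelian family $\mathfrak{g}^2_{1,1}$. The branch $c\neq0$ forces, via Jacobi and the first multiplicativity relation, $d=0$ together with either $a=1$ (giving $\mathfrak{g}^3_{1,1}$) or $b=0$ (giving $\mathfrak{g}^4_{1,1}$ when the even eigenvalue satisfies $a\neq0,1$). The branch $c=0$, $d\neq0$ forces $a=b^2$ and produces $\mathfrak{g}^5_{1,1}$. Finally I would quotient by graded isomorphisms: rescaling $e_0\mapsto\mu e_0$, $e_1\mapsto\nu e_1$ with $\mu,\nu\neq0$ fixes the eigenvalues $a,b$ of $\alpha$ while transforming $c\mapsto\mu c$ and $d\mapsto(\nu^2/\mu)d$. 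This lets me normalize the nonzero structure constant to $1$ in each branch, and the resulting representatives are pairwise non-isomorphic because their $\alpha$-spectra and the location of the surviving bracket are invariants.

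The main obstacle I anticipate is bookkeeping rather than any single computation: verifying that the case split on $(c,d)$ combined with the degenerate values of $a,b$ is both exhaustive and non-overlapping, and that the normalization by $(\mu,\nu)$ genuinely separates the families instead of silently merging them. I would be especially careful at the boundary parameter values, since, for instance, the distinction between $\mathfrak{g}^3_{1,1}$ and $\mathfrak{g}^4_{1,1}$ hinges precisely on whether the even eigenvalue equals $1$; the invariants separating the classes are exactly the constraints that multiplicativity imposes, so the hard part is tracking those constraints cleanly through every sub-branch.
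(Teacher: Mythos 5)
The paper never proves this theorem: it is imported verbatim from the cited reference \cite{WangZhangWei2016:claslowdimmulthomliesuperalg} as background, so there is no internal proof to compare against and your proposal must stand on its own merits. Your setup is the natural (and surely the intended) one, and your reduction of type $(1,1)$ to the system $bcd=0$, $bc(a-1)=0$, $d(a-b^2)=0$ in the structure constants $[e_0,e_1]=c\,e_1$, $[e_1,e_1]=d\,e_0$, $\alpha=\mathrm{diag}(a,b)$ is exactly what the axioms give.

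However, your solving of that system contains a genuine error. You claim that $c\neq 0$ forces $d=0$ ``via Jacobi and the first multiplicativity relation.'' It does not: Jacobi only gives $bd=0$, so when $b=0$ the constraint on $d$ disappears entirely, and the second multiplicativity relation then only gives $ad=0$. Hence $(a,b,c,d)=(0,0,1,1)$, i.e.\ the algebra $[e_0,e_1]=e_1$, $[e_1,e_1]=e_0$, $\alpha=0$, satisfies all of your equations and is a multiplicative Hom-Lie superalgebra with \emph{both} brackets nonzero. Since each graded piece is one-dimensional, every even isomorphism is a diagonal rescaling, which fixes the eigenvalues of $\alpha$; so this algebra is not isomorphic to any item on the list. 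The same degeneracy is silently dropped in your other branches: the solution $(c\neq 0, d=0)$ with $a=b=0$ is excluded from $\mathfrak{g}^4_{1,1}$ (which requires $a\neq 0$), and the solution $(c=0, d\neq 0)$ with $b=0$ (hence $a=b^2=0$) is excluded from $\mathfrak{g}^5_{1,1}$ (which also requires $a\neq 0$). A correct proof must confront these $\alpha$-degenerate solutions explicitly, either by exhibiting a convention under which the statement excludes them (e.g.\ some regularity hypothesis on $\alpha$) or by recognizing them as further classes; writing ``when the even eigenvalue satisfies $a\neq 0,1$'' merely restates the restriction in the target list, it does not derive it. Finally, you acknowledge the purely even type $(2,0)$ but never reconcile it with the statement: the list contains no $(2,0)$ entry, yet $2$-dimensional purely even multiplicative Hom-Lie superalgebras certainly exist (any $2$-dimensional multiplicative Hom-Lie algebra viewed with $\mathfrak{g}_1=0$), so a proof of the statement as literally written must explain why these are excluded. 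As it stands, your case analysis is neither exhaustive over the types nor does every surviving solution of your own equations land in the asserted normal forms.
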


Now, we recall the definitions  of $n$-ary Hom-Nambu superalgebras and $n$-Hom-Lie superalgebras, generalizing of $n$-ary Nambu superalgebras and $n$-Lie superalgebras (see \cite{AbdaouiMabroukMakhlouf}).
\begin{definition}
An \emph{$n$-ary Hom-Nambu} superalgebra $(\mathcal{N}, [\cdot,\dots,\cdot],  \widetilde{\alpha} )$ is a triple consisting of a linear space   $\mathcal{N}=\mathcal{N}_0\oplus\mathcal{N}_1$, an even
$n$-linear map $[\cdot ,\dots, \cdot ]:  \mathcal{N}^{ n}\to \mathcal{N}$ such that $[\mathcal{N}_{k_1} ,\dots,\mathcal{N}_{k_n}]
 \subset \mathcal{N}_{k_1+\dots +k_n}$ and a family
$\widetilde{\alpha}=(\alpha_i)_{1\leq i\leq n-1}$ of even linear maps $ \alpha_i:\ \ \mathcal{N}\to \mathcal{N}$, satisfying
  \begin{eqnarray}\label{NambuIdentity}
&& \forall (x_1,\dots, x_{n-1})\in \mathcal{H}(\mathcal{N})^{n-1}, \,\, (y_1,\dots,  y_n)\in \mathcal{H}(\mathcal{N})^{ n}: \nonumber \\
&& \big[\alpha_1(x_1),\dots.,\alpha_{n-1}(x_{n-1}),[y_1,\dots.,y_{n}]\big]= \\ \nonumber
&& \sum_{i=1}^{n}(-1)^{|X||Y|^{i-1}}\big[\alpha_1(y_1),\dots.,\alpha_{i-1}(y_{i-1}),[x_1,\dots.,x_{n-1},y_i], 
\alpha_i(y_{i+1}),\dots,\alpha_{n-1}(y_n)\big],
  \end{eqnarray}
where
$|X|=\displaystyle\sum_{k=1}^{n-1}|x_k|$ and $|Y|^{i-1}=\displaystyle\sum_{k=1}^{i-1}|y_k|.$

The identity \eqref{NambuIdentity} is called \emph{super-Hom-Nambu identity}.
  \end{definition}
Let $\widetilde{\alpha}:\mathcal{N}^{n-1}\to\mathcal{N}^{n-1}$ be even linear maps defined for all $X=(x_1,\ldots,x_{n-1})\in \mathcal{N}^{n-1}$ by
$\widetilde{\alpha}(X)=(\alpha_1(x_1),\ldots,\alpha_{n-1}(x_{n-1}))\in\mathcal{N}^{n-1}$.
For all $X=(x_1,\ldots,x_{n-1})\in \mathcal{N}^{n-1}$,  the map $\text{ad}_X:\mathcal{N}\to\mathcal{N}$ defined by
\begin{equation}\label{adjointMapNaire}
\text{ad}_X(y)=[x_{1},\dots,x_{n-1},y],\quad  \forall y\in \mathcal{N},
\end{equation}
is called adjoint map. Then the super-Hom-Nambu identity \eqref{NambuIdentity} may be written in terms of adjoint map as
\begin{eqnarray*}
\text{ad}_{\widetilde{\alpha} (X)}( [y_1,\dots,y_n])=
\sum_{i=1}^{n-1}(-1)^{|X||Y|^{i-1}} \left[\alpha_1(y_1),\dots,\alpha_{i-1}(y_{i-1}),
\text{ad}_X(y_{i}),\right. \\
\hspace{7cm} \left. \alpha_{i}(y_{i+1}) \dots,\alpha_{n-1}(y_n)\right].
\end{eqnarray*}

\begin{definition}
  An $n$-ary Hom-Nambu superalgebra $(\mathcal{N}, [\cdot,\dots,\cdot],  \widetilde{\alpha} )$ is called $n$-Hom-Lie superalgebra if the bracket
  $[\cdot,\dots,\cdot]$ is super-skewsym\-metric that is
  \begin{align}
& \forall\; 1\leq i\leq n-1: \nonumber \\
& [x_1,\dots,x_i,x_{i+1},\dots,x_n]=-(-1)^{|x_i||x_{i+1}|}[x_1,\dots,x_{i+1},x_i,\dots,x_n].
\label{SuperSkewSym}
  \end{align}
  It is equivalent to
 \begin{align}\label{SuperSkewSym1}
&\forall\;1\leq i<j\leq n: \nonumber \\
&\left[x_1,\dots,x_i,\dots,x_j,\dots,x_n \right]=-(-1)^{|X|^{j-1}_{i+1}(|x_i|+|x_j|)+|x_i||x_j|}  \left[x_1,\dots,x_j,\dots,x_i,\dots,x_n\right]
 \end{align}
 where $x_1,\dots ,x_n\in \mathcal{H}(\mathcal{N})$ and $|X|_{i}^{j}=\displaystyle\sum_{k=i}^{j}|x_k|.$
\end{definition}
\begin{remark}
When the maps $(\alpha_i)_{1\leq i\leq n-1}$ are all identity maps, one recovers the classical $n$-ary Nambu superalgebras.
\end{remark}

Let $(\mathcal{N},[\cdot,\dots,\cdot],\widetilde{\alpha})$ and
$(\mathcal{N}',[\cdot,\dots,\cdot]',\widetilde{\alpha}')$ be two $n$-ary Hom-Nambu
superalgebras  where $\widetilde{\alpha}=(\alpha_{i})_{1\leq i\leq n-1}$ and
$\widetilde{\alpha}'=(\alpha'_{i})_{1\leq i\leq n-1}$. A linear map $f:\mathcal{N}\to \mathcal{N}'$ is an
$n$-ary Hom-Nambu superalgebras \emph{morphism}  if it satisfies
\begin{eqnarray*}&f([x_{1},\dots,x_n])=[f(x_{1}),\dots,f(x_n)]',\\
&f \circ \alpha_i=\alpha'_i\circ f, \quad \forall i=1,\dots,n-1.
\end{eqnarray*}

In the sequel we deal sometimes with a particular class of $n$-ary Hom-Nambu superalgebras which we call $n$-ary multiplicative Hom-Nambu  superalgebras.

\begin{definition}
A \emph{multiplicative $n$-ary Hom-Nambu superalgebra }
(resp. \emph{ multiplicative $n$-Hom-Lie superalgebra}) is an $n$-ary Hom-Nambu superalgebra  (resp. $n$-Hom-Lie superalgebra) $(\mathcal{N}, [\cdot,\dots,\cdot],  \widetilde{ \alpha})$ with  $\widetilde{\alpha}=(\alpha_i)_{1\leq i\leq n-1}$
where  $\alpha_1= \dots =\alpha_{n-1}=\alpha$  and satisfying
\begin{equation}
\alpha([x_1,\dots,x_n])=[\alpha(x_1),\dots,\alpha(x_n)],\ \  \forall x_1,\dots,x_n\in \mathcal{N}.
\end{equation}
For simplicity, denote the $n$-ary multiplicative Hom-Nambu superalgebra as $(\mathcal{N}, [\cdot,\dots,\cdot],  \alpha)$ where $\alpha :\mathcal{N}\to \mathcal{N}$ is an even linear map. Also by misuse of language an element  $X\in \mathcal{N}^n$ refers to  $X=(x_1,\dots,x_{n})$ and  $\alpha(X)$ denotes
$(\alpha (x_1),\dots,\alpha (x_n))$.
\end{definition}
\begin{definition}
A multiplicative $n$-ary Hom-Nambu superalgebra $(\mathcal{N}, [\cdot ,\dots, \cdot],  \alpha)$ is called regular if $\alpha$ is bijective.
\end{definition}

\subsection{Derivations, Quasiderivations and generalized derivations  of multiplicative $n$-ary Hom-Nambu superalgebras}

In this section we recall the definition of derivation, quasiderivation and generalized derivation of multiplicative $n$-ary Hom-Nambu superalgebras.\\
Let $(\mathcal{N}, [\cdot,\dots,\cdot],  \alpha )$ be a  multiplicative $n$-ary Hom-Nambu superalgebra. We
denote by $\alpha^k$ the $k$-times composition of $\alpha$(i.e.  $\alpha^k=\displaystyle\underbrace{\alpha\circ \dots \circ\alpha}_{k-times}$ ).
In particular $\alpha^0=Id$ and $\alpha^1=\alpha$. If $(\mathcal{N}, [\cdot,\dots,\cdot],  \alpha )$ is a regular Hom-Lie superalgebra.

\begin{definition}
For any $k\geq1$, we call $\mathfrak{D}\in End(\mathcal{N})$ an $\alpha^k$-\emph{derivation }of the
 multiplicative $n$-ary Hom-Nambu superalgebra $(\mathcal{N}, [\cdot,\dots,\cdot],  \alpha )$ if
\begin{equation}\label{alphaKderiv1}[\mathfrak{D},\alpha]=0\ \ \textrm{i.e.}\ \ \mathfrak{D}\circ\alpha=\alpha\circ \mathfrak{D};\end{equation}
\begin{equation}\label{alphaKderiv2}
\mathfrak{D}[x_1,\dots,x_n]=\sum_{i=1}^n(-1)^{|\mathfrak{D}||X|^{i-1}}[\alpha^k(x_1),\dots,\alpha^k(x_{i-1}),\mathfrak{D}(x_i),\alpha^k(x_{i+1}),\dots,\alpha^k(x_n)].
\end{equation}
We denote by $Der_{\alpha^k}(\mathcal{N})$ the set of $\alpha^k$-derivations of
the  multiplicative $n$-Hom-Lie  superalgebra $\mathcal{N}$.
\end{definition}

For $X=(x_1,\dots,x_{n-1})\in \mathcal{N}^{ n-1}$ satisfying $\alpha(X)=X$ and $k\geq 1$,
we define the map $\text{ad}^k_X\in End(\mathcal{N})$ by
\begin{equation}\label{ad_k(u)}
\text{ad}^k_X(y)=[x_1,\dots,x_{n-1},\alpha^k(y)]\ \ \forall y\in \mathcal{N}.
\end{equation}
Then, we find the following result.

\begin{lem}
The map $\text{ad}^k_X$ is an $\alpha^{k+1}$-derivation (called inner $\alpha^{k+1}$-derivation), and $|\text{ad}^k_X|=|X|$.
\end{lem}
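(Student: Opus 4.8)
The plan is to verify the two defining properties of an $\alpha^{k+1}$-derivation directly for $\text{ad}^k_X$, namely the commutation $[\text{ad}^k_X,\alpha]=0$ and the twisted Leibniz rule \eqref{alphaKderiv2} with $k$ replaced by $k+1$, and then to read off the parity. First I would establish the commutation relation. Using multiplicativity of $\alpha$ and the hypothesis $\alpha(X)=X$ (so $\alpha(x_i)=x_i$ for $1\le i\le n-1$), I compute $\alpha\circ\text{ad}^k_X(y)=\alpha([x_1,\dots,x_{n-1},\alpha^k(y)])=[\alpha(x_1),\dots,\alpha(x_{n-1}),\alpha^{k+1}(y)]=[x_1,\dots,x_{n-1},\alpha^{k+1}(y)]$, which equals $\text{ad}^k_X(\alpha(y))$ since $\alpha$ commutes with its own powers. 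This gives \eqref{alphaKderiv1} for $\text{ad}^k_X$.

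The main content is the Leibniz-type identity, and this is where the super-Hom-Nambu identity \eqref{NambuIdentity} does the work. I would apply \eqref{NambuIdentity} with the first slot filled by $X=(x_1,\dots,x_{n-1})$ and the bracket $[y_1,\dots,y_n]$ in the last slot, but applied to $\alpha^k(y_1),\dots,\alpha^k(y_n)$ rather than to the $y_i$ themselves. Concretely, I evaluate $\text{ad}^k_X([y_1,\dots,y_n])=[x_1,\dots,x_{n-1},\alpha^k([y_1,\dots,y_n])]$ and use multiplicativity to rewrite $\alpha^k([y_1,\dots,y_n])=[\alpha^k(y_1),\dots,\alpha^k(y_n)]$. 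Then the left-hand side becomes $[\alpha_1(x_1),\dots,\alpha_{n-1}(x_{n-1}),[\alpha^k(y_1),\dots,\alpha^k(y_n)]]$ after invoking $\alpha(x_i)=x_i$, which is precisely the left-hand side of \eqref{NambuIdentity} with $y_i$ replaced by $\alpha^k(y_i)$. Expanding by \eqref{NambuIdentity} produces a sum over $i$ of terms $(-1)^{|X||Y|^{i-1}}[\alpha^{k+1}(y_1),\dots,[x_1,\dots,x_{n-1},\alpha^k(y_i)],\dots,\alpha^{k+1}(y_n)]$, where the inner bracket is exactly $\text{ad}^k_X(y_i)$ and the untouched slots carry one extra $\alpha$ from the identity composed with the $\alpha^k$ already present, giving $\alpha^{k+1}$. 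This matches \eqref{alphaKderiv2} for an $\alpha^{k+1}$-derivation.

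The parity claim $|\text{ad}^k_X|=|X|$ follows because the bracket is even, the $\alpha_i$ are even, and inserting $\alpha^k(y)$ into the last argument shifts degree by $|y|$, so $\text{ad}^k_X$ sends a homogeneous $y$ to an element of degree $|X|+|y|$; hence $\text{ad}^k_X$ is homogeneous of degree $|X|=\sum_{k=1}^{n-1}|x_k|$. I expect the only delicate point to be the bookkeeping of the sign exponents: I must check that the sign $(-1)^{|X||Y|^{i-1}}$ supplied by \eqref{NambuIdentity} coincides with the required $(-1)^{|\mathfrak{D}||X|^{i-1}}$ in \eqref{alphaKderiv2} once we set $\mathfrak{D}=\text{ad}^k_X$, using $|\mathfrak{D}|=|X|$ and noting that the degrees of the $y_i$ are unchanged by $\alpha^k$ since $\alpha$ is even. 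Verifying this agreement of signs, rather than the algebraic manipulation itself, is the genuine obstacle.
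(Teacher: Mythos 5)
Your proof is correct. The paper states this lemma without proof, and your argument --- rewriting $\text{ad}^k_X([y_1,\dots,y_n])$ via multiplicativity as $[\alpha(x_1),\dots,\alpha(x_{n-1}),[\alpha^k(y_1),\dots,\alpha^k(y_n)]]$ using $\alpha(X)=X$, then expanding by the super-Hom-Nambu identity \eqref{NambuIdentity} and matching signs via $|\mathfrak{D}|=|X|$ and the evenness of $\alpha$ --- is exactly the verification the authors leave implicit.
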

We denote by $Inn_{\alpha^k}(\mathcal{N})$ the space generate by all the inner $\alpha^{k+1}$-derivations.
For any $\mathfrak{D}\in Der_{\alpha^k}(\mathcal{N})$ and $\mathfrak{D}'\in Der_{\alpha^{k'}}(\mathcal{N})$ we define their supercommutator $[\mathfrak{D},\mathfrak{D}']$ as usual:
\begin{equation}\label{DerivationsCommutator}[\mathfrak{D},\mathfrak{D}']=\mathfrak{D}\circ \mathfrak{D}'
-(-1)^{|\mathfrak{D}||\mathfrak{D}'|}\mathfrak{D}'\circ \mathfrak{D},
\end{equation}
then $[\mathfrak{D},\mathfrak{D}']\in Der_{\alpha^{k+k'}}(\mathcal{N})$.
Set $Der(\mathcal{N})=\dl\bigoplus_{k\geq 0}Der_{\alpha^k}(\mathcal{N})$ and $Inn(\mathcal{N})=\dl\bigoplus_{k\geq 0}Inn_{\alpha^k}(\mathcal{N})$,
 the pair $(Der(\mathcal{N}),[\cdot,\cdot])$ is a Lie superalgebra.

\begin{definition}
  Let $(\mathcal N, [\cdot,\dots,\cdot], \alpha)$ be a multiplicative $n$-ary Hom-Nambu superalgebra. An endomorphism $\mathfrak{D}\in End(\mathcal N)$ is said to be an $\alpha^k-$quasiderivation, if
there exists  an endomorphism $\mathfrak{D}'\in End(\mathcal N)$ such that
$$\displaystyle\sum_{i=1}^n(-1)^{|\mathfrak{D}||X|^{i-1}}[\alpha^k(x_1),\dots,\mathfrak{D}(x_i),\dots, \alpha^k(x_n)]=\mathfrak{D}'([x_1,\dots, x_n]),$$
for all $x_1,\dots,x_n\in \mathcal N$.
We  call $\mathfrak{D}'$ the endomorphism associated to $\alpha^k-$quasiderivation $\mathfrak{D}$.
\end{definition}
We denote the set of $\alpha^k$-quasiderivations by $QDer_{\alpha^k}(\mathcal N)$ and $$QDer(\mathcal N)=\dl\bigoplus_{k\geq0}QDer_{\alpha^k}(\mathcal N).$$
\begin{definition} An endomorphism $\mathfrak{D}$ of a multiplicative $n$-ary Hom-Nambu superalgebra $(\mathcal N, [\cdot,\dots,\cdot], \alpha)$  is called a generalized $\alpha^k$-derivation   if there
exist linear mappings $$\mathfrak{D}',\mathfrak{D}'', \dots ,\mathfrak{D}^{(n-1)},\mathfrak{D}^{(n)} \in End(\mathcal{N}) $$ such that
\begin{equation}\label{genereliseDerivetin}
\mathfrak{D}^{(n)}([x_1, \dots , x_n])=\sum_{i=1}^n(-1)^{|\mathfrak{D}^{(i-1)}||X|^{i-1}}[\alpha^k(x_1), \dots ,\mathfrak{D}^{(i-1)}(x_i), \dots, \alpha^k(x_n)],
\end{equation}
for all $x_1,\dots , x_n\in\mathcal{N}$. An $(n + 1)$-tuple $(\mathfrak{D},\mathfrak{D}',\mathfrak{D}'', \dots ,\mathfrak{D}^{(n-1)},\mathfrak{D}^{(n)})$ is called an $(n + 1)$-ary
$\alpha^k$-derivation.
\end{definition}
We denote the set of generalized $\alpha^k$-derivations by $GDer_{\alpha^k}(\mathcal{N})$ and    $$GDer(\mathcal{N})=\dl\bigoplus_{k\geq0}GDer_{\alpha^k}(\mathcal{N}).$$


\section{$n$-Hom-Lie superalgebras induced by Hom-Lie superalgebras} \label{sec:nhomLiesuperindhomLiesuper}

In \cite{GuanChenSun}, the authors introduced a construction of a $3$-Hom-Lie superalgebra from a Hom-Lie
superalgebra. It is called $3$-Hom-Lie superalgebra induced by Hom-Lie superalgebra. In this section we generalize this construction to the $n$-ary Hom-algebras
by the approach in \cite{Abramov2017:Super3LiealgebrasinducedsuperLiealg}.

Let  $ (\mathfrak{g},[\cdot,\cdot],\alpha)$ be a multiplicative Hom-Lie superalgebra and $ \mathfrak{g}^*$ be its dual superspace. Fix an even
element of the dual space $\varphi\in \mathfrak{g}^*$. Define the triple product
as follows
\begin{align}\label{TripleProduct}
&\forall\ x,\ y,\ z\in \mathcal{H}(\mathfrak{g}): \nonumber \\
&[x,y,z]=\varphi(x)[y,z]+(-1)^{|x|(|y|+|z|)}\varphi(y)[z,x]+(-1)^{|z|(|x|+|y|)}\varphi(z)[x,y].
\end{align}
Obviously this triple product is   super-skew-symmetric. It is straightforward to compute the left-hand side and the
right-hand side of the super-Hom-Nambu identity \eqref{NambuIdentity} if $\varphi\circ\alpha=\varphi$
and
\begin{equation}\label{ConditionTrace}
\varphi(x)\varphi([y,z])+(-1)^{|x|(|y|+|z|)}\varphi(y)\varphi([z,x])+(-1)^{|z|(|x|+|y|)}\varphi(z)\varphi([x,y])=0.
\end{equation}

Now we consider $\varphi$ as an even $\mathbb{K}$-valued cochain of degree one of the Chevalley-Eilenberg
complex of a Hom-Lie superalgebra $\mathfrak g$. Let coboundary operator $\delta:\wedge^{k}\mathfrak{g}^*\to\wedge^{k}\mathfrak{g}^*$ be defined by
 \begin{equation}\label{OperatorCobord}
    \delta f(x_1,\dots,x_{k+1})=\sum_{i<j}(-1)^{i+j+1}(-1)^{\gamma^X_{ij}}f([x_i,x_j]_{\mathfrak g},\alpha(x_1)\dots,\widehat{x_i},\dots,\widehat{x_j},\dots,\alpha(x_{k+1})),
 \end{equation}
 where $\gamma^X_{ij}=|X|^n_{j+1}(|x_i|+|x_j|)+|x_i||X|_{j-1}^{i+1}$, for $f\in\wedge^{k}\mathfrak{g}^*$  and for all $ x_1,\dots,x_{k+1}\in \mathcal{H}(\mathfrak{g})$.
Then, $\delta\varphi(x, y) = \varphi([x, y])$. Finally, we can define the wedge product of two cochains $\varphi$ and $\delta\varphi$, which is the cochain of
degree three by
$$\varphi\wedge\delta\varphi(x, y, z) =\varphi(x)\varphi([y,z])+(-1)^{|x|(|y|+|z|)}\varphi(y)\varphi([z,x])+(-1)^{|z|(|x|+|y|)}\varphi(z)\varphi([x,y]).$$
Hence \eqref{ConditionTrace} is equivalent to $\varphi\wedge\delta\varphi=0$.
Thus, if an $1$-cochain $\varphi$ satisfies the equation \eqref{ConditionTrace}, then the triple product \eqref{TripleProduct} is the
$3$-Hom-Lie bracket, and  we will call this multiplicative $3$-Hom-Lie bracket
the quantum Hom-Nambu bracket induced by an even $1$-cochain.
\begin{definition}
Let $\phi\in \wedge^{n-2}\mathfrak{g}^*$ be an even $(n-1)$-cochain, we define the $n$-ary product as follows
\begin{equation}\label{nProduct}
[x_1,\dots,x_n]_\phi=\sum_{i<j}^{n}(-1)^{i+j+1}(-1)^{\gamma^X_{ij}}\phi(x_1,\dots,\hat{x_i},\dots,\hat{x_j},\dots,x_n)[x_i,x_j],
\end{equation}
for all $x_1,\dots,x_n\in\mathcal{H}(\mathfrak{g})$.
\end{definition}
It is clair that $[\cdot,\dots,\cdot ]_\phi$ is an even $n$-linear map.
\begin{proposition}
  The $n$-ary product $[\cdot,\dots,\cdot ]_\phi$ is super-skew-symmetric.
\end{proposition}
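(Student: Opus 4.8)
The plan is to establish the adjacent-transposition form \eqref{SuperSkewSym} of super-skew-symmetry directly, since adjacent transpositions generate the symmetric group and \eqref{SuperSkewSym} is exactly what the definition of super-skew-symmetry requires. Fix a position $p$ with $1\le p\le n-1$, write $X=(x_1,\dots,x_n)$, and let $X'$ denote the tuple obtained by swapping the entries in slots $p$ and $p+1$. I would show that $[X']_\phi=-(-1)^{|x_p||x_{p+1}|}[X]_\phi$ by splitting the sum \eqref{nProduct} into three groups indexed by how the summation pair $\{i,j\}$ meets $\{p,p+1\}$: namely (a) $\{i,j\}=\{p,p+1\}$, (b) exactly one of $i,j$ lies in $\{p,p+1\}$, and (c) $\{i,j\}\cap\{p,p+1\}=\emptyset$. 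The goal is to check that each group transforms under the swap by the same overall factor $-(-1)^{|x_p||x_{p+1}|}$.

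First I would dispose of the transparent cases (a) and (c). In case (a) both $x_p$ and $x_{p+1}$ are deleted from the cochain argument, so the factor $\phi(x_1,\dots,\hat{x}_p,\dots,\hat{x}_{p+1},\dots,x_n)$ is untouched; the exponent $\gamma^X_{p,p+1}$ is also unchanged, since the middle block between slots $p$ and $p+1$ is empty and the remaining partial degree is symmetric in $|x_p|,|x_{p+1}|$; and the bracket contributes $[x_{p+1},x_p]=-(-1)^{|x_p||x_{p+1}|}[x_p,x_{p+1}]$ by super-skew-symmetry of the Hom-Lie bracket. In case (c) the bracket $[x_i,x_j]$ and the positional sign $(-1)^{i+j+1}$ are untouched, and $\gamma^X_{ij}$ is invariant because the consecutive slots $p,p+1$ either both belong to or both miss each partial-degree range occurring in $\gamma^X_{ij}$; the only effect of the swap is that $x_p$ and $x_{p+1}$ exchange two adjacent slots of $\phi$, and the super-skew-symmetry of $\phi\in\wedge^{n-2}\mathfrak{g}^*$ then supplies exactly the factor $-(-1)^{|x_p||x_{p+1}|}$.

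The hard part is case (b), which is the main obstacle because the terms must be re-indexed before they can be compared. Here I would pair the $X'$-term carrying the summation pair $\{p,m\}$ with the $X$-term carrying $\{p+1,m\}$ (and symmetrically $\{p+1,m\}$ with $\{p,m\}$), for each $m\notin\{p,p+1\}$. After deleting the two relevant slots, the surviving arguments of $\phi$ appear in the same order for both tuples, so the cochain factors coincide and the brackets agree as well. What remains is to track two sign sources: the positional sign $(-1)^{i+j+1}$, which flips because one index shifts by one; and the exponent $\gamma$, where a short computation of the partial-degree sums gives $\gamma^{X'}_{p,m}-\gamma^X_{p+1,m}\equiv|x_p||x_{p+1}|\pmod 2$, the extra contribution arising because $x_p$ (now sitting in slot $p+1$ of $X'$) falls inside the middle block of the pair $\{p,m\}$. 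Combining the sign flip with this shift yields the factor $-(-1)^{|x_p||x_{p+1}|}$ for each matched pair; the mirror subcase $m<p$ is handled by the identical computation.

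Summing the three groups, every term of \eqref{nProduct} acquires the common factor $-(-1)^{|x_p||x_{p+1}|}$ under the swap $x_p\leftrightarrow x_{p+1}$, which is precisely \eqref{SuperSkewSym}; hence $[\cdot,\dots,\cdot]_\phi$ is super-skew-symmetric. The delicate point throughout is the bookkeeping in case (b), where the displacement of one index through the middle block of the exponent $\gamma$ must be shown to reproduce exactly the graded transposition sign.
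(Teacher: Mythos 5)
Your proposal is correct and takes essentially the same route as the paper's proof: the paper also reduces to an adjacent transposition and splits the sum \eqref{nProduct} into seven partial sums $S_1,\dots,S_7$, which are exactly your cases (a), (b), (c) refined by position. Its pairings $S'_3=-(-1)^{|x_i||x_{i+1}|}S_6$, $S'_4=-(-1)^{|x_i||x_{i+1}|}S_5$, etc., are your case-(b) matching of the pair $\{p,m\}$ in the swapped tuple with $\{p+1,m\}$ in the original, and its computation $\zeta^X_{il}=\gamma^X_{i+1,l}+|x_i||x_{i+1}|$ is precisely your shift of the exponent $\gamma$ by $|x_p||x_{p+1}|$ combined with the flip of the positional sign.
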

\begin{proof}
  Let $x_1,\dots,x_n\in \mathcal{H}(\mathfrak g)$ and fix an integer  $1\leq i\leq n-1$. Then,
  \begin{align*}
  &[x_1,\dots,x_i,x_{i+1},\dots,x_n]_{\phi}= \\
  &\displaystyle\sum_{k<l<i}
  (-1)^{k+l+1}(-1)^{\gamma^X_{kl}}\phi(x_1,\dots,\widehat{x}_k,\dots,\widehat{x}_l\dots,x_i,\dots,x_n)[x_k,x_l]\\
  &+\displaystyle\sum_{i+1<k<l} (-1)^{k+l+1}(-1)^{\gamma^X_{kl}}\phi(x_1,\dots,x_i,x_{i+1},\dots,\widehat{x}_k\dots,\widehat{x}_l,\dots,x_n)[x_k,x_l]\\
  &+\displaystyle\sum_{k=i<l\neq i+1} (-1)^{i+l+1}(-1)^{\gamma^X_{il}}\phi(x_1,\dots,\widehat{x}_i,x_{i+1},\dots,\widehat{x}_l,\dots,x_n)[x_i,x_l]\\ &+\displaystyle\sum_{k<l=i} (-1)^{k+i+1}(-1)^{\gamma^X_{ki}}\phi(x_1,\dots,\widehat{x}_k,\dots,\widehat{x}_i,x_{i+1},\dots,x_n)[x_k,x_i]\\
  &+\displaystyle\sum_{i\neq k<l=i+1} (-1)^{k+i}(-1)^{\gamma^X_{k,i+1}}\phi(x_1,\dots,\widehat{x}_k,\dots,x_i,\widehat{x}_{i+1},\dots,x_n)[x_k,x_{i+1}]\\
  &+\displaystyle\sum_{k=i+1<l} (-1)^{l+i}(-1)^{\gamma^X_{i+1,l}}\phi(x_1,\dots,x_i,\widehat{x}_{i+1},\dots,\widehat{x}_l,\dots,x_n)[x_{i+1},x_l]\\
  &+ (-1)^{\gamma^X_{i,i+1}}\phi(x_1,\dots,\widehat{x}_i,\widehat{x}_{i+1},\dots,x_n)[x_i,x_{i+1}]\\
  &=S_1+\dots+S_7
  \end{align*}
  and
  \begin{align*}
  &[x_1,\dots,x_{i+1},x_i,\dots,x_n]_{\phi}=\\
  &\displaystyle\sum_{k<l<i}
  (-1)^{k+l+1}(-1)^{\gamma^X_{kl}}\phi(x_1,\dots,\widehat{x}_k,\dots,\widehat{x}_l\dots,x_{i+1},x_i,\dots,x_n)[x_k,x_l]\\
  &+\displaystyle\sum_{i+1<k<l} (-1)^{k+l+1}(-1)^{\gamma^X_{kl}}\phi(x_1,\dots,x_{i+1},x_i,\dots,\widehat{x}_k\dots,\widehat{x}_l,\dots,x_n)[x_k,x_l]\\
  &+\displaystyle\sum_{k=i<l\neq i+1} (-1)^{i+l+1}(-1)^{\zeta^X_{il}}\phi(x_1,\dots,\widehat{x}_{i+1},x_i,\dots,\widehat{x}_l,\dots,x_n)[x_{i+1},x_l]\\ &+\displaystyle\sum_{k<l=i} (-1)^{k+i+1}(-1)^{\zeta^X_{ki}}\phi(x_1,\dots,\widehat{x}_k,\dots,\widehat{x}_{i+1},x_i,\dots,x_n)[x_k,x_{i+1}]\\
  &+\displaystyle\sum_{i\neq k<l=i+1} (-1)^{k+i}(-1)^{\zeta^X_{k,i+1}}\phi(x_1,\dots,\widehat{x}_k,\dots,x_{i+1},\widehat{x}_i,\dots,x_n)[x_k,x_i]\\
  &+\displaystyle\sum_{k=i+1<l} (-1)^{l+i}(-1)^{\zeta^X_{i+1,l}}\phi(x_1,\dots,x_{i+1},\widehat{x}_i,\dots,\widehat{x}_l,\dots,x_n)[x_i,x_l]\\
  &+ (-1)^{\gamma^X_{i,i+1}}\phi(x_1,\dots,\widehat{x}_i,\widehat{x}_{i+1},\dots,x_n)[x_{i+1},x_i]\\
  &=S'_1+\dots+S'_7
  \end{align*}
  where
  \begin{align*}
  \zeta^X_{il}&=|X|^n_{l+1}(|x_{i+1}|+|x_l|)+|x_{i+1}|(|x_i|+|x_{i+2}|+\dots+|x_{l-1}|)\\
  &=|X|^n_{l+1}(|x_{i+1}|+|x_l|)+|x_{i+1}|(|x_{i+2}|+\dots+|x_{l-1}|)+|x_{i+1}||x_i|\\
  &=\gamma^X_{i+1,i}+|x_{i+1}||x_i|.
  \end{align*}
So we conclude that $S'_3=-(-1)^{|x_i||x_{i+1}|}S_6$.
In the same way, it is easy to see also that
$S'_4=-(-1)^{|x_i||x_{i+1}|}S_5$, $S'_5=-(-1)^{|x_i||x_{i+1}|}S_4,$ and $S'_6=-(-1)^{|x_i||x_{i+1}|}S_3$.

The super-skew-symmetry of $\phi$ gives that $S'_1=-(-1)^{|x_i||x_{i+1}|}S_1,\;S'_2=-(-1)^{|x_i||x_{i+1}|}S_2$ and
  $S'_7=-(-1)^{|x_i||x_{i+1}|}S_7$.
  Finally we get $$[x_1,\dots,x_i,x_{i+1},\dots,x_n]_{\phi}=-(-1)^{|x_i||x_{i+1}|}[x_1,\dots,x_{i+1},x_i,\dots,x_n]_{\phi}.$$
\end{proof}
Given $X=(x_1,\dots,x_{n-3})\in \wedge^{n-3}\mathcal{H}(\mathfrak g)$, $Y=(y_1,\dots,y_{n})\in \wedge^{n}\mathcal{H}(\mathfrak g)$
and $z\in \mathcal{H}(\mathfrak g)$, we define the linear map $\phi_X$ by $\phi_X(z)=\phi(X,z),$ and
\begin{align*}
\phi\wedge\delta\phi_X(Y)&=\sum_{i<j}^{n}(-1)^{i+j}(-1)^{\gamma^Y_{ij}}\phi(y_1,\dots \hat{y_i}\dots\hat{y_j}\dots,y_{n})\delta\phi_X(y_i,y_j)\\
&=\sum_{i<j}^{n}(-1)^{i+j}(-1)^{\gamma^Y_{ij}}\phi(y_1,\dots \hat{y_i}\dots\hat{y_j}\dots,y_{n})\phi_X([y_i,y_j]).\end{align*}
\begin{thm}\label{HomLieSupAlToNaryHomLieSupAlg}
  Let $(\mathfrak g,[\cdot,\cdot],\alpha)$ be a  multiplicative Hom-Lie superalgebra, $\mathfrak g^*$  its dual and $\phi$ be
an even cochain of degree $n-2$, i.e. $\phi\in\wedge^{n-2}\mathfrak g^*$. The linear space   $\mathfrak g$
equipped with the n-ary product \eqref{nProduct} and the even linear map $\alpha$ is a multiplicative n-Hom-Lie superalgebra if and only if
\begin{align}\label{NHomLieProduct}
 & \phi\wedge\delta\phi_X=0,\ \forall X\in \wedge^{n-3}\mathcal{H}(\mathfrak{g}),\\
  & \phi\circ(\alpha\otimes Id\otimes\dots\otimes Id)=\phi.\label{NHomLieProduct1}
\end{align}
\end{thm}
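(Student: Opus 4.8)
The plan is to reduce the statement to verifying the two defining properties of a multiplicative $n$-Hom-Lie superalgebra that are not yet established, namely the multiplicativity $\alpha([x_1,\dots,x_n]_\phi)=[\alpha(x_1),\dots,\alpha(x_n)]_\phi$ and the super-Hom-Nambu identity \eqref{NambuIdentity}; the super-skew-symmetry of $[\cdot,\dots,\cdot]_\phi$ has already been proved in the preceding Proposition and holds with no hypothesis on $\phi$. I would treat the two remaining properties separately, showing that multiplicativity is equivalent to \eqref{NHomLieProduct1} and that the super-Hom-Nambu identity is equivalent to \eqref{NHomLieProduct}.

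For multiplicativity, applying $\alpha$ to \eqref{nProduct} and using that $\alpha([x_i,x_j])=[\alpha(x_i),\alpha(x_j)]$ (multiplicativity of $\mathfrak g$) turns the left-hand side into $\sum_{i<j}(-1)^{i+j+1}(-1)^{\gamma^X_{ij}}\phi(x_1,\dots,\hat x_i,\dots,\hat x_j,\dots,x_n)[\alpha(x_i),\alpha(x_j)]$, whereas $[\alpha(x_1),\dots,\alpha(x_n)]_\phi$ carries the coefficient $\phi(\alpha(x_1),\dots,\widehat{\alpha(x_i)},\dots,\widehat{\alpha(x_j)},\dots,\alpha(x_n))$ in front of the same bracket $[\alpha(x_i),\alpha(x_j)]$. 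Thus multiplicativity amounts to the full $\alpha$-invariance $\phi\circ\alpha^{\otimes(n-2)}=\phi$. It remains to see that this is equivalent to the apparently weaker \eqref{NHomLieProduct1}, in which $\alpha$ acts only in the first slot: since $\alpha$ is even and degree-preserving, I can move any slot to the first position by super-skew-symmetry of $\phi$, strip off $\alpha$ via \eqref{NHomLieProduct1}, and move it back, the two Koszul signs cancelling; iterating over all slots yields $\phi\circ\alpha^{\otimes(n-2)}=\phi$.

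The substantial part is the super-Hom-Nambu identity. I would substitute \eqref{nProduct} into the left-hand side $[\alpha(x_1),\dots,\alpha(x_{n-1}),[y_1,\dots,y_n]_\phi]_\phi$ and into each summand of the right-hand side of \eqref{NambuIdentity}, expanding every nested $n$-ary bracket. This produces two kinds of terms: those in which the inner Hom-Lie bracket $[y_a,y_b]$ survives untouched as one argument of an outer $\phi$-weighted Hom-Lie bracket, and those producing triple Hom-Lie brackets of the form $[\alpha(x_p),[y_a,y_b]]$. For the latter I would invoke the super-Hom-Jacobi identity of $\mathfrak g$ in the form \eqref{RepresentationAdjointe}, which matches the triple brackets appearing on the two sides and cancels them. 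After this cancellation, the difference of the two sides collapses to a sum of terms, each being a Hom-Lie bracket $[y_a,y_b]$ multiplied by a product of two $\phi$-values: one $\phi$ evaluated on $n-2$ of the $y$'s and one on the remaining $n-3$ of the $x$'s together with $[y_a,y_b]$. These are precisely the scalars $\phi(y_1,\dots,\hat y_i,\dots,\hat y_j,\dots,y_n)\,\phi_X([y_i,y_j])$ assembled in the displayed formula for $\phi\wedge\delta\phi_X(Y)$, with $X$ ranging over the $(n-3)$-tuples extracted from the $x$'s. Hence the identity holds for all arguments if $\phi\wedge\delta\phi_X=0$ for every $X\in\wedge^{n-3}\mathcal H(\mathfrak g)$, which is \eqref{NHomLieProduct}; the converse is read off the same computation, isolating \eqref{NHomLieProduct} and \eqref{NHomLieProduct1} by evaluating on homogeneous elements chosen so that the surviving brackets $[y_a,y_b]$ are independent.

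I expect the main obstacle to be exactly this sign and index bookkeeping: tracking the permutation signs $(-1)^{i+j+1}$ and the Koszul signs $(-1)^{\gamma^X_{ij}}$ through the double expansion, and verifying that the Jacobi-type cancellations leave behind precisely the wedge-product coefficients $\phi\wedge\delta\phi_X$ and not some other combination. Organizing the terms by which pair $\{y_a,y_b\}$ remains inside the innermost Hom-Lie bracket, exactly as the skew-symmetry proof split its sum into the blocks $S_1,\dots,S_7$, should make the matching systematic and keep the computation under control.
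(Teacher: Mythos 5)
Your proposal follows essentially the same route as the paper: multiplicativity reduces to the $\alpha$-invariance of $\phi$ (the paper silently uses the full invariance $\phi\circ\alpha^{\otimes(n-2)}=\phi$, so your remark that \eqref{NHomLieProduct1} together with super-skew-symmetry of $\phi$ yields it is a welcome extra detail), and the super-Hom-Nambu identity is checked by the same double expansion of \eqref{nProduct}, cancellation of the triple brackets $[\alpha(x_k),[y_i,y_j]]$ via the super-Hom-Jacobi identity against the corresponding terms on the right-hand side, and identification of the surviving coefficients with $\phi\wedge\delta\phi_X(Y)$. One small slip in your description: the surviving bracket factors after the Jacobi cancellation are the brackets $[\alpha(x_k),\alpha(x_l)]$ of the $x$'s, with $[y_i,y_j]$ appearing only inside $\phi_X$, not brackets $[y_a,y_b]$ of the $y$'s as you wrote; this does not affect the structure of the argument.
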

\begin{proof}
Firstly, if $(x_1,\dots,x_n)\in  \wedge^{ n}\mathcal{H}(\mathfrak g)$, then
\begin{align*}
&[\alpha(x_1),\dots,\alpha(x_n)]_{\phi} =\\
&=\sum_{i<j}^{n}(-1)^{i+j+1}(-1)^{\gamma^Y_{ij}}\phi(\alpha(x_1),
\dots,\widehat{\alpha(x_i)},\dots,\widehat{\alpha(x_j)},\dots,\alpha(x_n))[\alpha(x_i),\alpha(x_j)]\\
&=\sum_{i<j}^{n}(-1)^{i+j+1}(-1)^{\gamma^X_{ij}}\phi(x_1,\dots,\hat{x_i},\dots,\hat{x_j},\dots,x_n)\alpha([x_i,x_j])\\
&=\alpha([x_1,\dots,x_n]_{\phi}).
\end{align*}
  Secondly, for $(x_1,\dots,x_{n-1})\in \wedge^{ n-1}\mathcal{H}(\mathfrak g)$ and $(y_1,\dots,y_n)\in \wedge^{ n}\mathcal{H}(\mathfrak g)$, we have
\begin{align*}
   &[\alpha(x_1),\dots,\alpha(x_{n-1}),[y_1,\dots,y_n]_{\phi}]_{\phi}\\&=\displaystyle\sum_{i<j}(-1)^{i+j+1} (-1)^{\gamma^Y_{ij}}\phi(y_1,\dots,\widehat{y}_i,\dots,\widehat{y}_j,\dots,y_n)[\alpha(x_1),\dots,\alpha(x_{n-1}),[y_i,y_j]]_{\phi}\\
   &=\displaystyle\sum_{i<j}\displaystyle\sum_{k<l\leq n-1}(-1)^{i+j+k+l}
   (-1)^{\gamma^Y_{ij}+\gamma^X_{kl}}(-1)^{(|x_k|+|x_l|)(|x_i|+|x_j|)} \\
   &\phi(\alpha(x_1),\dots,\widehat{\alpha(x_k)},\dots,\widehat{\alpha(x_l)},\dots,[y_i,y_j]) \phi(y_1,\dots,\widehat{y}_i,\dots,\widehat{y}_j,\dots,y_n)[\alpha(x_k),\alpha(x_l)]\\
   &+\displaystyle\sum_{i<j}\displaystyle\sum_{k<n}(-1)^{i+j+k}
   (-1)^{\gamma^Y_{ij}}(-1)^{|x_k||X^{k+1}|} \\
  &\phi(\alpha(x_1),\dots,\widehat{\alpha(x_k)},\dots,\alpha(x_{(n-1)}),\dots,\widehat{[y_i,y_j]}) \phi(y_1,\dots,\widehat{y}_i,\dots,\widehat{y}_j,\dots,y_n)[\alpha(x_k),[y_i,y_j]].
 \end{align*}
 The terms $[\alpha(x_k),[y_i,y_j]]$ are simplified by identity of Jacobi in the second half of the Filippov identity. Now, we group together the other terms according to their coefficient $[\alpha(x_i),\alpha(x_j)]$. For example, if we fixed $(k, l)$ and, if we collect all the terms containing the commutator $[\alpha(x_k),\alpha(x_l)]$,  then we get the expression
\begin{multline*}
\hspace{-7pt} \Big(\displaystyle\sum_{i<j}(-1)^{i+j+k+l}(-1)^{\gamma^Y_{ij}+\gamma^X_{kl}}(-1)^{(|x_k|+|x_l|)(|x_i|+|x_j|)}
   \phi(\alpha(x_1),\dots,\widehat{\alpha(x_k)},\dots,\widehat{\alpha(x_l)},\dots,[y_i,y_j]) \\
\phi(y_1,\dots,\widehat{y}_i,\dots,\widehat{y}_j,\dots,y_n)\Big)[\alpha(x_k),\alpha(x_l)].
\end{multline*}

Hence the $n$-ary product \eqref{nProduct} will satisfy the $n$-ary Filippov-Jacobi identity if for
any elements $X=(x_1,\dots,x_{n-3})\in\wedge^{n-3}\mathcal{H}(\mathfrak g)$ and $Y=(y_1,\dots,y_n)\in\wedge^n \mathcal{H}(\mathfrak g)$  we require
$$\Big(\displaystyle\sum_{i<j}^n(-1)^{i+j}(-1)^{\gamma^Y_{ij}}
   \phi(\alpha(x_1),\dots,\alpha(x_{n-3}),[y_i,y_j])
   \phi(y_1,\dots,\widehat{y}_i,\dots,\widehat{y}_j,\dots,y_n)\Big)=0.$$
\end{proof}
\begin{example}[\cite{WangZhangWei2016:claslowdimmulthomliesuperalg}] Consider a $3$-dimensionally graded linear space $L=L_0\oplus L_1$, where $L_0$ is generated by $e_1,e_2$ and   $L_1$ is generated by $e_3$. Define an even linear map $\alpha:L\to L$ by
$$\alpha(e_1)=a^2e_1,\ \ \alpha(e_2)=e_2 \ \ \alpha(e_3)=ae_2$$ and an even super-skewsymmetric bilinear map $[\cdot,\cdot]:L\times L\to L$ given by
$$[e_1,e_2]=[e_1,e_3]=0,\ \ [e_2,e_3]=e_3,\ \ [e_3,e_3]=e_1.$$
Then $(L,[\cdot,\cdot],\alpha)$ is a multiplicative Hom-Lie superalgebra. Define an even linear form $\phi:L\to \mathbb{K}$ given by $\phi(e_1)=0$ and $\phi(e_2)=b$. Then, we have
$$\phi\circ\alpha=\phi\ \ \textrm{and}\ \ \phi\wedge\delta\phi=0.$$
Therefore, using Theorem \ref{HomLieSupAlToNaryHomLieSupAlg}, we can construct a multiplicative $3$-Hom-Lie superalgebra $(L,[\cdot,\cdot,\cdot]_\phi,\alpha)$, where the ternary bracket $[\cdot,\cdot,\cdot]_\phi$ is given by $[e_2,e_3,e_3]=\phi(e_1)[e_3,e_3]=be_1$ and $a,b$ are parameters.

\end{example}
\begin{definition}
  Let $\phi:\mathfrak g\wedge\dots\wedge \mathfrak  g\to \mathbb{K}$ be an even super-skewsymmetric linear form of   the multiplicative  Hom-Lie superalgebra $(\mathfrak g,[\cdot,\cdot],\alpha)$, then $\phi$ is called supertrace if:
   $$\phi\circ([\cdot,\cdot]\wedge Id\wedge\dots\wedge Id)=0~~~~~\text{and}~~ \phi\circ(\alpha\wedge Id\wedge\dots\wedge Id)=\phi.
   ~$$
\end{definition}
\begin{cor}
  Let $\phi:\mathfrak g^{\wedge n-2}\to \mathbb{K}$ be a supertrace of  Hom-Lie superalgebra $(\mathfrak g,[\cdot,\cdot],\alpha)$, then $\mathfrak g_\phi=(\mathfrak g,[.,\dots,.]_\phi,\alpha)$ is a $n$-Hom-Lie superalgebra.
\end{cor}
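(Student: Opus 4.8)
The plan is to deduce the Corollary directly from Theorem \ref{HomLieSupAlToNaryHomLieSupAlg}. Since $\phi\in\wedge^{n-2}\mathfrak g^*$ is by hypothesis an even super-skew-symmetric form, the $n$-ary product $[\cdot,\dots,\cdot]_\phi$ is already well defined and super-skew-symmetric, so it suffices to check that the two defining properties of a supertrace imply exactly the two conditions \eqref{NHomLieProduct} and \eqref{NHomLieProduct1} that the theorem requires in order for $(\mathfrak g,[\cdot,\dots,\cdot]_\phi,\alpha)$ to be a multiplicative $n$-Hom-Lie superalgebra.

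The invariance condition \eqref{NHomLieProduct1} is immediate: the second supertrace property $\phi\circ(\alpha\wedge Id\wedge\dots\wedge Id)=\phi$ is the same statement, because $\phi$ is super-skew-symmetric and hence its evaluation on a wedge agrees with its evaluation on the corresponding tensor on homogeneous arguments, giving $\phi\circ(\alpha\otimes Id\otimes\dots\otimes Id)=\phi$.

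For \eqref{NHomLieProduct} I would argue that \emph{every individual summand} of $\phi\wedge\delta\phi_X$ already vanishes. Recall that
$$\phi\wedge\delta\phi_X(Y)=\sum_{i<j}^{n}(-1)^{i+j}(-1)^{\gamma^Y_{ij}}\phi(y_1,\dots,\widehat{y_i},\dots,\widehat{y_j},\dots,y_n)\,\phi_X([y_i,y_j]),$$
where $\phi_X([y_i,y_j])=\phi(x_1,\dots,x_{n-3},[y_i,y_j])$. Using the super-skew-symmetry of $\phi$, I would transpose the bracketed entry $[y_i,y_j]$ past the $n-3$ entries $x_1,\dots,x_{n-3}$ into the first slot, picking up a single Koszul sign, rewriting it as $\pm\,\phi([y_i,y_j],x_1,\dots,x_{n-3})$. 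The first supertrace property $\phi\circ([\cdot,\cdot]\wedge Id\wedge\dots\wedge Id)=0$ then forces this to be $0$. Since the factor $\phi_X([y_i,y_j])$ is zero in each term, the whole sum vanishes for every $Y$, giving $\phi\wedge\delta\phi_X=0$ for all $X\in\wedge^{n-3}\mathcal{H}(\mathfrak g)$, which is \eqref{NHomLieProduct}.

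With both hypotheses \eqref{NHomLieProduct} and \eqref{NHomLieProduct1} in hand, Theorem \ref{HomLieSupAlToNaryHomLieSupAlg} yields that $\mathfrak g_\phi=(\mathfrak g,[\cdot,\dots,\cdot]_\phi,\alpha)$ is a multiplicative $n$-Hom-Lie superalgebra. The only delicate point is the sign bookkeeping when moving $[y_i,y_j]$ to the front, but because the supertrace annihilates \emph{any} occurrence of a bracket inside $\phi$ regardless of its position, the precise value of that sign is irrelevant—the term is zero either way—so I expect no genuine obstacle beyond this cosmetic step.
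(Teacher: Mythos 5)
Your proof is correct and is precisely the argument the paper intends: the corollary appears with no written proof, being treated as an immediate consequence of Theorem~\ref{HomLieSupAlToNaryHomLieSupAlg}, and your deduction fills in exactly that route. In particular, your key observation is sound—the second supertrace axiom is condition \eqref{NHomLieProduct1}, and each summand of $\phi\wedge\delta\phi_X$ vanishes termwise because the super-skew-symmetry of $\phi$ moves $[y_i,y_j]$ to the first slot (up to an irrelevant Koszul sign), where the first supertrace axiom annihilates it, giving \eqref{NHomLieProduct}.
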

%
\begin{proposition}
Let $(\mathfrak g,[\cdot,\cdot],\alpha)$ be a Hom-Lie superalgebra, and let $\mathfrak{D} \in Der(\mathfrak{g})$ be an $\alpha^k$-derivation such that
$$\sum_{i=1}^{n-2}(-1)^{|\mathfrak{D}||X|^{i-1}}\phi(x_1,\dots \mathfrak{D}(x_i),\dots,x_{n-2})=0.$$
Then $\mathfrak{D}$ is  an $\alpha^k$-derivation of the $n$-Hom-Lie superalgebra $(\mathfrak g,[\cdot,\dots,\cdot]_\phi,\alpha)$.
\end{proposition}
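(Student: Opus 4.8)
The plan is to verify the $n$-ary $\alpha^k$-derivation identity \eqref{alphaKderiv2} for the induced bracket $[\cdot,\dots,\cdot]_\phi$ of \eqref{nProduct} by expanding both sides and matching terms. The commutation $\mathfrak{D}\circ\alpha=\alpha\circ\mathfrak{D}$ is inherited verbatim from $\mathfrak{D}\in Der(\mathfrak g)$, so only the Leibniz-type rule \eqref{alphaKderiv2} has to be checked. Applying $\mathfrak{D}$ to $[x_1,\dots,x_n]_\phi$ and using that $\phi$ is $\mathbb{K}$-valued, $\mathfrak{D}$ passes through the scalar coefficients and acts only on the binary brackets; the binary $\alpha^k$-derivation rule $\mathfrak{D}[x_i,x_j]=[\mathfrak{D}(x_i),\alpha^k(x_j)]+(-1)^{|\mathfrak{D}||x_i|}[\alpha^k(x_i),\mathfrak{D}(x_j)]$ then presents the left-hand side as a sum over $i<j$ of the coefficient $(-1)^{i+j+1}(-1)^{\gamma^X_{ij}}\phi(x_1,\dots,\widehat{x}_i,\dots,\widehat{x}_j,\dots,x_n)$ times the two brackets just displayed.

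Next I would expand the right-hand side of \eqref{alphaKderiv2}: for each $m$ I expand $[\alpha^k(x_1),\dots,\mathfrak{D}(x_m),\dots,\alpha^k(x_n)]_\phi$ by \eqref{nProduct}, summing over pairs $p<q$, and split according to whether $m\in\{p,q\}$ or $m\notin\{p,q\}$. In the first group the binary bracket is $[\mathfrak{D}(x_m),\alpha^k(x_q)]$ when $p=m$ and $[\alpha^k(x_p),\mathfrak{D}(x_m)]$ when $q=m$; in the second group it is $[\alpha^k(x_p),\alpha^k(x_q)]=\alpha^k([x_p,x_q])$ by multiplicativity of $\alpha$. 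In every term the slots feeding $\phi$ carry $\alpha^k$, which I strip off using the fact that $\alpha$ in any single slot preserves $\phi$; this single-slot invariance follows from \eqref{NHomLieProduct1} together with super-skew-symmetry of $\phi$, and iterating it clears $\alpha^k$ from every argument of $\phi$.

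I then match the two groups against the two pieces of the left-hand side. The $m\in\{p,q\}$ terms, after the $\alpha^k$-stripping in $\phi$, reproduce exactly the families $[\mathfrak{D}(x_i),\alpha^k(x_j)]$ (arising from $p=m<q=j$) and $[\alpha^k(x_i),\mathfrak{D}(x_j)]$ (arising from $i=p<q=m$), so this group should coincide with the left-hand side once the signs are reconciled. For the $m\notin\{p,q\}$ terms I fix the pair $(p,q)$, factor out the common $\alpha^k([x_p,x_q])$, and recognize the remaining sum over $m$ (again after stripping $\alpha^k$ inside $\phi$) as precisely an instance of the compatibility hypothesis $\sum_{\ell}(-1)^{|\mathfrak{D}||X|^{\ell-1}}\phi(\dots,\mathfrak{D}(x_\ell),\dots)=0$ applied to the $(n-2)$-tuple obtained by deleting positions $p$ and $q$. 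Hence this whole group vanishes and only the first group survives, yielding \eqref{alphaKderiv2}.

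The hard part will be the sign bookkeeping, concentrated in the second group. Deleting the two bracket positions $p$ and $q$ shifts every partial degree sum, and one must check that the accumulated signs $(-1)^{p+q+1}(-1)^{\gamma^X_{pq}}(-1)^{|\mathfrak{D}||X|^{m-1}}$ — where $\gamma$ is now computed with $\mathfrak{D}(x_m)$ of degree $|\mathfrak{D}|+|x_m|$ occupying slot $m$ — reorganize into exactly the partial-degree weights required by the hypothesis for the reduced $(n-2)$-tuple. The analogous reconciliation in the first group must additionally absorb the factor $(-1)^{|\mathfrak{D}||x_i|}$. Tracking how the Koszul sign $\gamma^X_{ij}$ interacts with the extra degree $|\mathfrak{D}|$ carried by the derived slot is the only genuinely delicate computation; the structural identities $\mathfrak{D}\alpha=\alpha\mathfrak{D}$, $\alpha$-multiplicativity, and $\phi\circ\alpha=\phi$ enter only to normalize arguments and are otherwise routine.
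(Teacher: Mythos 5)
Your proposal is correct and follows essentially the same route as the paper's own proof: both expand $\mathfrak{D}([x_1,\dots,x_n]_\phi)$ and $\sum_{l}[\alpha^k(x_1),\dots,\mathfrak{D}(x_l),\dots,\alpha^k(x_n)]_\phi$ via \eqref{nProduct}, match the terms where $\mathfrak{D}$ lands in a bracket slot against the left-hand side (using the binary derivation rule, $\mathfrak{D}\alpha=\alpha\mathfrak{D}$ and the $\alpha$-invariance of $\phi$), and for each fixed pair $(i,j)$ invoke the hypothesis to annihilate the terms where $\mathfrak{D}$ sits inside $\phi$. The Koszul-sign bookkeeping you flag as the delicate step is in fact largely suppressed in the paper's own computation, so your outline is, if anything, the more careful one on that point.
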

\begin{proof}
Let $X=(x_1,\dots,x_n)\in\wedge^n \mathcal{H}(\mathfrak g)$, on the one hand we get

\begin{align*}
\mathfrak{D}([x_1,\dots,x_n]_\phi)&= \mathfrak{D}\Big(\displaystyle\sum_{i<j}(-1)^{i+j+1}
\phi(\alpha(x_1),\dots,\alpha(\widehat{x}_i),\dots,\alpha(\widehat{x}_j),\dots,\alpha(x_n))[\alpha(x_i),\alpha(x_j)]\Big)\\
&=\displaystyle\sum_{i<j}(-1)^{i+j+1}
\phi(\alpha(x_1),\dots,\alpha(\widehat{x}_i),\dots,\alpha(\widehat{x}_j),\dots,\alpha(x_n))\mathfrak{D}([\alpha(x_i),\alpha(x_j)])\\
&=\displaystyle\sum_{i<j}(-1)^{i+j+1}
\phi(x_1,\dots,\widehat{x}_i,\dots,\widehat{x}_j,\dots,x_n)[\alpha(\mathfrak{D}(x_i)),\alpha^{k+1}(x_j)]\\
&+\displaystyle\sum_{i<j}(-1)^{i+j+1}
\phi(x_1,\dots,\widehat{x}_i,\dots,\widehat{x}_j,\dots,x_n)[\alpha^{k+1}(x_i),\alpha(\mathfrak{D}(x_j))],
\end{align*}
 on the other hand, we have
 \begin{align*}
&\displaystyle\sum_{l=1}^n[\alpha^k(x_1),\dots,\alpha^k(x_{l-1}),\mathfrak{D}(x_l),\dots,\alpha^k(x_{l+1}),\dots,\alpha^k(x_n)]_\phi \\&=
\displaystyle\sum_{l=1}^n\displaystyle\sum_{i<j\;;\;i,j\neq l}(-1)^{i+j+1}
\phi(\alpha^k(x_1),\dots,\widehat{\alpha^k(x_i)},\dots,\mathfrak{D}(x_l),\dots,\widehat{\alpha^k(x_j)},\dots,\alpha^k(x_n)) \\
& \hspace{12cm} [\alpha^k(x_i),\alpha^k(x_j)] \\
&+\displaystyle\sum_{l=1}^n\displaystyle\sum_{i<l}(-1)^{i+l+1}
\phi(\alpha^k(x_1),\dots,\widehat{\alpha^k(x_i)},\dots,\widehat{\mathfrak{D}(x_l)},\dots,\alpha^k(x_n))[\alpha^k(x_i),\mathfrak{D}(x_l)]\\
&+\displaystyle\sum_{l=1}^n\displaystyle\sum_{l=i<j}(-1)^{j+l+1}
\phi(\alpha^k(x_1),\dots,\widehat{\mathfrak{D}(x_l)},\dots,\widehat{\alpha^k(x_j)},\dots,\alpha^k(x_n))[\mathfrak{D}(x_l),\alpha^k(x_j)].
\end{align*}
If $\mathfrak{D}$ is an $\alpha^k$-derivation then $$\mathfrak{D}([x_1,\dots,x_n]_\phi)=\displaystyle\sum_{l=1}^n(-1)^{|\mathfrak{D}||X|^{i-1}}[\alpha^k(x_1),\dots,\alpha^k(x_{l-1}),\mathfrak{D}(x_l),\dots,\alpha^k(x_{l+1}),\dots,\alpha^k(x_n)]_\phi,$$ which gives
\begin{multline*}
\hspace{-10pt} \sum_{\substack{i<j\\i,j\neq l}}(-1)^{i+j+1}\Big(\displaystyle\sum_{l=1}^n(-1)^{|\mathfrak{D}||X|^{l-1}}\displaystyle
\phi(\alpha^k(x_1),\dots,\widehat{\alpha^k(x_i)},\dots,\mathfrak{D}(x_l),\dots,\widehat{\alpha^k(x_j)},\dots,\alpha^k(x_n))\Big) \\ %
[\alpha^k(x_i),\alpha^k(x_j)]=0.
\end{multline*}
Finally, if we fixed $(i,j)$ we have
$$\displaystyle\sum_{l=1}^{n-2}(-1)^{|\mathfrak{D}||X|^{l-1}}\displaystyle
\phi(\alpha^k(x_1),\dots,\mathfrak{D}(x_l),\dots,\alpha^k(x_{n-2}))=0.$$
\end{proof}
\begin{proposition}

Let $(\mathfrak g,[\cdot,\cdot],\alpha)$ be a Hom-Lie superalgebra and let $\mathfrak{D}\in QDer(\mathfrak g)$ be an $\alpha^k$-quasi-derivation and $\mathfrak{D}':\mathfrak{g}\to\mathfrak{g}$ the endomorphism associated to $\mathfrak{D}$ such that
$$\sum_{i=1}^{n-2}(-1)^{|\mathfrak{D}||X|^{i-1}}\phi(x_1,\dots \mathfrak{D}(x_i),\dots,x_{n-2})=0.$$
Then $\mathfrak{D}$ is  an $\alpha^k$-quasi-derivation of the $n$-Hom-Lie superalgebra $(\mathfrak g,[\cdot,\dots,\cdot]_\phi,\alpha)$ with the same endomorphism associated to $\mathfrak{D}'$.
\end{proposition}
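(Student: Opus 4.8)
The plan is to verify the defining identity of an $\alpha^k$-quasiderivation for the induced $n$-ary bracket, that is, to show
$$\sum_{l=1}^n(-1)^{|\mathfrak{D}||X|^{l-1}}[\alpha^k(x_1),\dots,\mathfrak{D}(x_l),\dots,\alpha^k(x_n)]_\phi=\mathfrak{D}'([x_1,\dots,x_n]_\phi),$$
where $\mathfrak{D}'$ is the very same associated endomorphism supplied by the hypothesis $\mathfrak{D}\in QDer(\mathfrak g)$. The whole computation runs in close parallel to the preceding proposition on $\alpha^k$-derivations; the only structural change is that the binary derivation identity is replaced throughout by the binary quasiderivation identity $[\mathfrak{D}(x),\alpha^k(y)]+(-1)^{|\mathfrak{D}||x|}[\alpha^k(x),\mathfrak{D}(y)]=\mathfrak{D}'([x,y])$, which moves the operator from $\mathfrak{D}$ to $\mathfrak{D}'$ on the bracket.

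First I would expand the left-hand side by inserting the definition \eqref{nProduct} of $[\cdot,\dots,\cdot]_\phi$ into each summand and sorting the resulting terms by the position of $\mathfrak{D}(x_l)$. Exactly as in the derivation case this splits into three families: a family $S_A$ in which $l\neq i,j$, so $\mathfrak{D}(x_l)$ sits inside the scalar factor $\phi(\dots)$ while the binary bracket is $[\alpha^k(x_i),\alpha^k(x_j)]$; and two families $S_B$, $S_C$ in which $l=j$ respectively $l=i$, so $\mathfrak{D}(x_l)$ occupies one of the two slots of the binary bracket. Throughout I will keep the Koszul signs $(-1)^{\gamma^X_{ij}}$ and the insertion signs $(-1)^{|\mathfrak{D}||X|^{l-1}}$ attached to each term, taking care that replacing $x_l$ by $\mathfrak{D}(x_l)$ shifts its degree by $|\mathfrak{D}|$.

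Next I would dispose of $S_A$: after fixing $(i,j)$ and factoring out the common bracket $[\alpha^k(x_i),\alpha^k(x_j)]$, its coefficient is precisely $\sum_{l}(-1)^{|\mathfrak{D}||X|^{l-1}}\phi(\dots,\mathfrak{D}(x_l),\dots)$ over the $(n-2)$-tuple obtained by deleting $x_i,x_j$, which vanishes by the standing hypothesis on $\phi$ and $\mathfrak{D}$. For $S_B+S_C$ I would collect, for each fixed $(i,j)$, the two surviving terms into $\phi(x_1,\dots,\widehat{x}_i,\dots,\widehat{x}_j,\dots,x_n)\big([\mathfrak{D}(x_i),\alpha^k(x_j)]+(-1)^{|\mathfrak{D}||x_i|}[\alpha^k(x_i),\mathfrak{D}(x_j)]\big)$ and then apply the binary quasiderivation identity to rewrite the parenthesis as $\mathfrak{D}'([x_i,x_j])$.

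Finally I would compare with the right-hand side: since each $\phi(\dots)$ is a scalar, linearity of $\mathfrak{D}'$ gives $\mathfrak{D}'([x_1,\dots,x_n]_\phi)=\sum_{i<j}(-1)^{i+j+1}(-1)^{\gamma^X_{ij}}\phi(\dots)\mathfrak{D}'([x_i,x_j])$, which matches $S_B+S_C$ term by term, whence the desired identity holds with associated endomorphism $\mathfrak{D}'$. The only delicate point, just as in the derivation proposition, is the sign bookkeeping: reconciling the Koszul exponents $\gamma^X_{ij}$ after the parity shift caused by $\mathfrak{D}$ with the insertion signs $(-1)^{|\mathfrak{D}||X|^{l-1}}$ and the sign $(-1)^{|\mathfrak{D}||x_i|}$ produced by the quasiderivation relation. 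This bookkeeping is identical to the one already carried out for derivations, so it can be imported verbatim, the genuinely new ingredient being only the passage from $\mathfrak{D}([x_i,x_j])$ to $\mathfrak{D}'([x_i,x_j])$.
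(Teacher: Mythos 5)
Your proof is correct and is exactly the argument the paper intends: the paper states this proposition without any proof, leaving it as the evident adaptation of the preceding derivation proposition, and your proposal carries out precisely that adaptation (the three-family decomposition $S_A$, $S_B$, $S_C$, vanishing of the $\phi$-internal family by the standing hypothesis, and the binary quasiderivation identity replacing the derivation identity so that $\mathfrak{D}'([x_i,x_j])$ appears and factors out of the sum by linearity). The one point worth making explicit in your sign bookkeeping is that the relative sign $(-1)^{|\mathfrak{D}||x_i|}$ demanded by the quasiderivation identity, and the cancellation of all other $|\mathfrak{D}|$-dependent signs, follow because $\phi$ is even, so every nonzero term forces $|X|-|x_i|-|x_j|=\overline{0}$; with that observation (which is equally needed, though unstated, in the paper's derivation proof) your claim that the bookkeeping imports verbatim is sound.
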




\section{$n$-ary Hom-Nambu superalgebras induced by Hom-Lie superalgebras } \label{sec:naryhomnambusuperindHomLiesuper}


In this section we construct  an $n$-ary Hom-Nambu superalgebras with a help of a given Hom-Lie superalgebra by analogue of Hom-Lie triple system given in \cite{Yau:HomNambuLie} in graded case.
 Let $(\mathfrak{g},[\cdot,\cdot],\alpha)$ be a multiplicative Hom-Lie superalgebra. Define the following $n$-linear map $[\cdot,\dots,\cdot]_n:\mathfrak g^{\otimes n}\to \mathfrak g \label{crochet_n}$:
 \begin{equation}\label{bracketnuplet}
\Big[x_1,\dots,x_n\Big]_n=\Big[\big[[\dots[x_1,x_2],\alpha(x_3)],\alpha^2(x_4)\big]\dots\alpha^{n-3}(x_{n-1})],\alpha^{n-2}(x_{n})\Big].
\end{equation}
For n=2, $[x_1,x_2]_2=[x_1,x_2]$ and for $n\geq 3$, $[x_1,\dots,x_n]_n=[[x_1,\dots,x_{n-1}]_{n-1},\alpha^{n-2}(x_n)]$.

\begin{thm}\label{n-upletfrom Lie}
Let $(\mathfrak{g},[\cdot,\cdot], \alpha)$ be a multiplicative  Hom-Lie superalgebra. Then
$$\mathfrak{g}_n=(\mathfrak{g},[\cdot ,\dots,\cdot ]_n, \alpha^{n-1})$$
is a multiplicative $n$-ary Hom-Nambu superalgebra.
\end{thm}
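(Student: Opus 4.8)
The plan is to verify the two defining properties of a multiplicative $n$-ary Hom-Nambu superalgebra for the triple $(\mathfrak g,[\cdot,\dots,\cdot]_n,\alpha^{n-1})$: multiplicativity of $\alpha^{n-1}$ with respect to $[\cdot,\dots,\cdot]_n$, and the super-Hom-Nambu identity \eqref{NambuIdentity} with all twisting maps taken equal to $\alpha^{n-1}$. Since a Hom-Nambu superalgebra need not be super-skew-symmetric, no symmetry of the iterated bracket has to be checked, and the grading property $[\mathcal N_{k_1},\dots,\mathcal N_{k_n}]_n\subset\mathcal N_{k_1+\dots+k_n}$ is immediate because $[\cdot,\cdot]$ and $\alpha$ are even. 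The whole argument rests on the recursion $[x_1,\dots,x_n]_n=[[x_1,\dots,x_{n-1}]_{n-1},\alpha^{n-2}(x_n)]$ noted after \eqref{bracketnuplet}, which lets me collapse the first $n-1$ entries into a single element of $\mathfrak g$ and thereby reduce everything to binary computations.

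First I would record an auxiliary multiplicativity lemma: for all $m\geq 0$ and $k\geq 2$,
\[
\alpha^m([y_1,\dots,y_k]_k)=[\alpha^m(y_1),\dots,\alpha^m(y_k)]_k .
\]
This follows by induction on $k$ from the recursion and from $\alpha([a,b])=[\alpha(a),\alpha(b)]$ (so that each $\alpha^m$ commutes with the binary bracket). Its case $m=n-1$, $k=n$ is exactly the required multiplicativity of $\alpha^{n-1}$, and the lemma is used repeatedly to move powers of $\alpha$ across iterated brackets. Next I would reduce the super-Hom-Nambu identity to a single-element statement. Writing $w=[x_1,\dots,x_{n-1}]_{n-1}$ (so $|w|=|X|$) and $u=[y_1,\dots,y_n]_n$, the recursion gives $[x_1,\dots,x_{n-1},y_i]_n=[w,\alpha^{n-2}(y_i)]$, and the lemma applied to $[\alpha^{n-1}(x_1),\dots,\alpha^{n-1}(x_{n-1})]_{n-1}=\alpha^{n-1}(w)$ turns the left side of \eqref{NambuIdentity} into $[\alpha^{n-1}(w),\alpha^{n-2}(u)]$. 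Hence \eqref{NambuIdentity} is equivalent to the key identity
\[
[\alpha^{n-1}(w),\alpha^{n-2}(u)]=\sum_{i=1}^{n}(-1)^{|w|\,|Y|^{i-1}}\big[\alpha^{n-1}(y_1),\dots,[w,\alpha^{n-2}(y_i)],\dots,\alpha^{n-1}(y_n)\big]_n,
\]
to be proved for arbitrary homogeneous $w\in\mathfrak g$.

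I would prove this key identity by induction on $n$. The base case $n=2$ reads $[\alpha(w),[y_1,y_2]]=[[w,y_1],\alpha(y_2)]+(-1)^{|w|\,|y_1|}[\alpha(y_1),[w,y_2]]$, which is precisely the rearranged super-Hom-Jacobi identity \eqref{RepresentationAdjointe}. For the inductive step, set $v=[y_1,\dots,y_{n-1}]_{n-1}$, so $u=[v,\alpha^{n-2}(y_n)]$, and expand the left side as $[\alpha^{n-1}(w),[\alpha^{n-2}(v),\alpha^{2n-4}(y_n)]]$ using the lemma. Applying \eqref{RepresentationAdjointe} once (with $a=\alpha^{n-2}(w)$, $b=\alpha^{n-2}(v)$, $c=\alpha^{2n-4}(y_n)$) splits this into $(-1)^{|w|\,|Y|^{n-1}}[\alpha^{n-1}(v),[\alpha^{n-2}(w),\alpha^{2n-4}(y_n)]]$, which after peeling off the last slot via the recursion is exactly the $i=n$ summand on the right, plus the term $[[\alpha^{n-2}(w),\alpha^{n-2}(v)],\alpha^{2n-3}(y_n)]$. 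Factoring out the common binary bracket with $\alpha^{2n-3}(y_n)$ from the summands $i=1,\dots,n-1$, it remains to show
\[
\sum_{i=1}^{n-1}(-1)^{|w|\,|Y|^{i-1}}C_i=[\alpha^{n-2}(w),\alpha^{n-2}(v)],\qquad C_i=\big[\alpha^{n-1}(y_1),\dots,[w,\alpha^{n-2}(y_i)],\dots,\alpha^{n-1}(y_{n-1})\big]_{n-1}.
\]
The crucial observation is that this is exactly the inductive hypothesis for $n-1$ applied to the arguments $\alpha(y_1),\dots,\alpha(y_{n-1})$ with the same $w$, since $[\alpha(y_1),\dots,\alpha(y_{n-1})]_{n-1}=\alpha(v)$ by the lemma; this closes the induction.

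I expect the main obstacle to be the sign and $\alpha$-exponent bookkeeping. One must check that the Koszul sign $(-1)^{|w|\,|Y|^{i-1}}$ demanded by \eqref{NambuIdentity} agrees with the sign emitted by the single use of super-Hom-Jacobi (in particular $(-1)^{|w|\,|Y|^{n-1}}$ on the $i=n$ term, with $|v|=|Y|^{n-1}$), and—most delicately—that the inductive hypothesis must be invoked with the \emph{shifted} arguments $\alpha(y_j)$ rather than $y_j$, so that the exponent of $\alpha$ on $v$ on the left and on the surviving $y_j$ on the right come out correct. Once this matching is pinned down, the remaining steps are routine applications of bilinearity of $[\cdot,\cdot]$ and of the multiplicativity lemma.
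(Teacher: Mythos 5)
Your proof is correct and takes essentially the same route as the paper: your ``key identity'' is exactly the paper's Lemma \ref{adj2} (that $\mathrm{ad}^2$ distributes over the iterated bracket) evaluated at the shifted arguments $\alpha^{n-2}(y_j)$, and you prove it by the same induction on $n$, with the super-Hom-Jacobi identity \eqref{RepresentationAdjointe} as base case and the recursion $[x_1,\dots,x_n]_n=[[x_1,\dots,x_{n-1}]_{n-1},\alpha^{n-2}(x_n)]$ driving the inductive step. The only cosmetic differences are that the paper states the lemma unshifted and inserts the $\alpha^{n-2}$-shift at the point of application (whereas you build the shift into the statement, which is why your inductive hypothesis must be invoked at $\alpha(y_j)$), and that you spell out the multiplicativity of $\alpha^{n-1}$ on the $n$-bracket, which the paper uses implicitly.
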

To prove this theorem we need the following lemma.
\begin{lem}\label{adj2}
Let $(\mathfrak{g},[\cdot ,\cdot ], \alpha)$ be a multiplicative Hom-Lie superalgebra, and $\text{ad}^2$ the adjoint map defined by
$\text{ad}_x^2(y)=\text{ad}_x(y)=[x,y].$ Then, we have
$$\text{ad}_{\alpha^{n-1}(x)}^2[y_1,\dots,y_n]_n=\displaystyle\sum_{k=1}^n
(-1)^{|x||Y|^{k-1}}[\alpha(y_1),\dots,\alpha(y_{k-1}),\text{ad}_x^2(y_k),\alpha(y_{k+1}),\dots,\alpha(y_n)]_n,$$
where $x\in\mathcal{H}(\mathfrak{g}), y\in \mathcal{H}(\mathfrak g)$ and $(y_1,\dots,y_n)\in\mathcal{H}(\mathfrak{g})^n$.
\end{lem}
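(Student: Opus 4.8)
The plan is to prove this lemma by induction on $n$, the arity of the bracket $[\cdot,\dots,\cdot]_n$. This is the natural strategy because the iterated bracket $[y_1,\dots,y_n]_n$ is itself defined recursively via $[y_1,\dots,y_n]_n=[[y_1,\dots,y_{n-1}]_{n-1},\alpha^{n-2}(y_n)]$, so an inductive hypothesis on the $(n-1)$-ary case should feed directly into the computation for the $n$-ary case. The base case $n=2$ reduces to the statement that $\operatorname{ad}^2_{\alpha(x)}[y_1,y_2]$ distributes as a graded derivation over the binary bracket, which is precisely the super-Hom-Jacobi identity written in the adjoint form \eqref{RepresentationAdjointe}; here I would simply unwind $\operatorname{ad}^2_{\alpha(x)}[y_1,y_2]=[\alpha(x),[y_1,y_2]]$ and apply \eqref{RepresentationAdjointe} with the appropriate sign conventions.

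For the inductive step, assume the claim holds for arity $n-1$. First I would write $[y_1,\dots,y_n]_n=[[y_1,\dots,y_{n-1}]_{n-1},\alpha^{n-2}(y_n)]$, so that
$$\operatorname{ad}^2_{\alpha^{n-1}(x)}[y_1,\dots,y_n]_n=\bigl[\alpha^{n-1}(x),[[y_1,\dots,y_{n-1}]_{n-1},\alpha^{n-2}(y_n)]\bigr].$$
Now I would apply the super-Hom-Jacobi identity \eqref{RepresentationAdjointe} to the inner binary bracket, treating $u=[y_1,\dots,y_{n-1}]_{n-1}$ and $v=\alpha^{n-2}(y_n)$ as the two arguments. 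Crucially, to apply that identity with $\alpha^{n-1}(x)$ as the first slot, one needs $\alpha^{n-1}(x)=\alpha(\alpha^{n-2}(x))$ and the arguments in the form $\alpha(\cdot)$; here multiplicativity of $\alpha$ must be used to move powers of $\alpha$ through the bracket, e.g. $\alpha^{n-2}([y_1,\dots,y_{n-1}]_{n-1})$ versus $[\alpha(y_1),\dots,\alpha(y_{n-1})]_{n-1}$. This splits the expression into two pieces: one where $\operatorname{ad}^2_x$ (via the inner Jacobi) lands on $[y_1,\dots,y_{n-1}]_{n-1}$, and one where it lands on $\alpha^{n-2}(y_n)$, producing the $k=n$ term of the desired sum. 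On the first piece I would then invoke the inductive hypothesis to distribute $\operatorname{ad}^2$ across the $(n-1)$-ary bracket, yielding the terms $k=1,\dots,n-1$.

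The main obstacle I anticipate is bookkeeping the Koszul signs $(-1)^{|x||Y|^{k-1}}$ correctly through each step, together with tracking the powers of $\alpha$. Two sign sources must be reconciled: the sign $(-1)^{|x||y|}$ emerging each time the super-Hom-Jacobi identity \eqref{RepresentationAdjointe} is applied, and the accumulated parity $|Y|^{k-1}=\sum_{l=1}^{k-1}|y_l|$ that $x$ must be commuted past to reach the $k$-th slot. When the inductive hypothesis is applied to $[y_1,\dots,y_{n-1}]_{n-1}$, its sign $(-1)^{|x||Y|^{k-1}}$ for $k\le n-1$ should match, while the top term $k=n$ picks up the full sign $(-1)^{|x||Y|^{n-1}}$ from passing $x$ across the entire bracket $[y_1,\dots,y_{n-1}]_{n-1}$, whose total parity is $|Y|^{n-1}=\sum_{l=1}^{n-1}|y_l|$. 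I would handle this by carefully computing the parity of each homogeneous subexpression and verifying that the two contributions assemble into a single sum $\sum_{k=1}^n$ with the uniform sign stated. The commutation relations $\alpha\circ\operatorname{ad}^2_x=\operatorname{ad}^2_{\alpha(x)}\circ\alpha$ (a consequence of multiplicativity) will be the technical glue needed to align the $\alpha$-powers, and I expect the verification to be routine but sign-intensive once the recursion and the identity \eqref{RepresentationAdjointe} are set up.
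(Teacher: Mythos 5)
Your proposal is correct and follows essentially the same route as the paper's own proof: induction on the arity $n$, with the base case $n=2$ being the super-Hom-Jacobi identity in adjoint form \eqref{RepresentationAdjointe}, and the inductive step splitting $[y_1,\dots,y_n]_n=[[y_1,\dots,y_{n-1}]_{n-1},\alpha^{n-2}(y_n)]$ via that same identity, using multiplicativity of $\alpha$ to align the $\alpha$-powers, and absorbing the two resulting pieces into the terms $k\le n-1$ (by the inductive hypothesis) and $k=n$ of the stated sum. The sign and $\alpha$-power bookkeeping you flag is exactly the content of the paper's computation, and your description of how the signs $(-1)^{|x||Y|^{k-1}}$ arise is accurate.
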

\begin{proof}
  For $n=2$, using the super-Hom-Jacobi  identity we have
  \begin{align*}
\text{ad}_{\alpha(x)}^2[y,z]&=[\alpha(x),[y,z]]=[[x,y],\alpha(z)]+(-1)^{|x||y|}[\alpha(y),[x,z]]\\&=
[\text{ad}_x^2(y),\alpha(z)]+(-1)^{|x||y|}[\alpha(y),\text{ad}_x^2(z)].
  \end{align*}
 Assume that the property is true up to order $n$, that is $$\text{ad}_{\alpha^{n-1}(X)}^2[y_1,\dots,y_n]_n=\displaystyle\sum_{k=1}^n
(-1)^{|X||Y|^{k-1}}[\alpha(y_1),\dots,\alpha(y_{k-1}),\text{ad}_X^2(y_k),\alpha(y_{k+1}),\dots,\alpha(y_n)]_n.$$
Let $x\in\mathcal{H}(\mathfrak{g})$ and $(y_1,\dots,y_{n+1})\in\mathcal{H}(\mathfrak{g})^{n+1}$, we have
\begin{align*}
&\text{ad}^2_{\alpha^n(x)}[y_1,\dots,y_{n+1}]=\text{ad}^2_{\alpha^n(x)}[[y_1,\dots,y_n]_n,\alpha^{n-1}(y_{n+1})]_2\\
&=\Big[\text{ad}^2_{\alpha^{n-1}(x)}[y_1,\dots,y_n]_n,\alpha^n(y_{n+1})\Big]_2+
(-1)^{|x||Y|}\Big[[\alpha(y_1),\dots,\alpha(y_n)]_n,\text{ad}^2_{\alpha^{n-1}(x)}(\alpha^{n-1}(y_{n+1}))\Big]_2\\
&=\displaystyle\sum_{k=1}^n\Big[[\alpha(y_1),\dots,\alpha(y_{k-1}),\text{ad}^2_x(y_k),\alpha(y_{k+1}),\dots,\alpha(y_n)]_n,\alpha^n(y_{n+1})\Big]\\
&+(-1)^{|x||Y|^{k-1}}\Big[[\alpha(y_1),\dots,\alpha(y_n)]_n,\alpha^{n-1}(\text{ad}^2_x(y_{n+1}))\Big]_2\\
&=\displaystyle\sum_{k=1}^n(-1)^{|x||Y|^{k-1}}\Big[\alpha(y_1),\dots,\alpha(y_{k-1}),\text{ad}^2_x(y_k),\alpha(y_{k+1}),\dots,\alpha(y_n),\alpha(y_{n+1})\Big]_{n+1}\\
&+\Big[\alpha(y_1),\dots,\alpha(y_n),\text{ad}^2_x(y_{n+1})\Big]_{n+1}\\
&=\displaystyle\sum_{k=1}^{n+1}(-1)^{|x||Y|^{k-1}}\Big[\alpha(y_1),\dots,\alpha(y_{k-1}),\text{ad}^2_x(y_k),\alpha(y_{k+1}),\dots,\alpha(y_{n+1})\Big]_{n+1}.
\end{align*}
The lemma is proved.
\end{proof}

\begin{proof}(\textbf{Proof of Theorem \ref{n-upletfrom Lie}})
Let $X=(x_1,\dots,x_{n-1})\in\mathcal{H}(\mathfrak{g})^{n-1}$ and $Y=(y_1,\dots,y_n)\in\mathcal{H}(\mathfrak{g})^n$. Using Lemma \ref{adj2}, we have
\begin{align*}
&\Big[\alpha^{n-1}(x_1),\dots,\alpha^{n-1}(x_{n-1}),[y_1,\dots,y_n]_n\Big]_n\\=&
\Big[[\alpha^{n-1}(x_1),\dots,\alpha^{n-1}(x_{n-1})]_{n-1},[\alpha^{n-2}(y_1),\dots,\alpha^{n-2}(y_n)]_n\Big]_2\\=&
\text{ad}^2_{\alpha^{n-1}[x_1,\dots,x_{n-1}]_{n-1}}([\alpha^{n-2}(y_1),\dots,\alpha^{n-2}(y_n)]_n)\\=&
\displaystyle\sum_{k=1}^n(-1)^{|X||Y|^{k-1}}\Big[\alpha^{n-1}(y_1),\dots,\text{ad}^2_{[x_1,\dots,x_{n-1}]_{n-1}}(\alpha^{n-2}(y_k)),\dots,\alpha^{n-1}(y_n)\Big]_n\\=&
\displaystyle\sum_{k=1}^n(-1)^{|X||Y|^{k-1}}\Big[\alpha^{n-1}(y_1),\dots,[[x_1,\dots,x_{n-1}]_{n-1},\alpha^{n-2}(y_k)]_2,\dots,\alpha^{n-1}(y_n)\Big]_n\\=&
\displaystyle\sum_{k=1}^n(-1)^{|X||Y|^{k-1}}\Big[\alpha^{n-1}(y_1),\dots,[x_1,\dots,x_{n-1},y_k]_n,\dots,\alpha^{n-1}(y_n)\Big]_n.
\end{align*}
\end{proof}
\begin{example}Consider the $2$-dimensional multiplicative Hom-Lie superalgebras $\mathfrak{g}^3_{1,1}$ and $\mathfrak{g}^4_{1,1}$ given in Theorem \ref{classification2dimension}. We can construct a multiplicative $n$-ary Hom-Nambu superalgebras structures on $\mathfrak{g}^3_{1,1}$ and $\mathfrak{g}^4_{1,1}$ given respectively by:
$$[e_1,e_0,\dots,e_0]^3_n=(-1)^{n-1}e_1\ \
\textrm{and}\ \ [e_1,e_0,\dots,e_0]^4_n=-(-a)^{n-2}e_1.$$
The other brackets are zero.
\end{example}
\begin{example}[\cite{WangZhangWei2016:claslowdimmulthomliesuperalg}] Consider a $3$-dimensional graded linear space $L=L_0\oplus L_1$, where $L_0$ is generated by $e_1$ and   $L_1$ is generated by $e_2 ,e_3$. Define an even linear map $\alpha:L\to L$ by
$$\alpha(e_1)=ae_1,\ \ \alpha(e_2)= ae_2 \ \ \alpha(e_3)= e_3$$ and an even super-skewsymmetric bilinear map $[\cdot,\cdot]:L\times L\to L$ given by
$$[e_1,e_2]=[e_2,e_2]=[e_3,e_3]=0,\ \ [e_1,e_3]=be_2,\ \ [e_3,e_3]=ce_1,$$
where $a,b,c$ are parameters and $a\neq0$.
Then $(L,[\cdot,\cdot],\alpha)$ is a multiplicative Hom-Lie superalgebra.
Therefore, using Theorem \ref{n-upletfrom Lie}, we can construct a multiplicative $3$-ary Hom-Nambu superalgebra $(L,[\cdot,\cdot,\cdot]_3,\alpha^2)$, where the ternary bracket $[\cdot,\cdot,\cdot]_3$ is given by $[e_1,e_3,e_3]= bce_1$ and $[e_2,e_3,e_3]= bce_2$. We can also construct a multiplicative $n$-ary Hom-Nambu superalgebra $(L,[\cdot,\dots,\cdot]_n,\alpha^{n-1})$, where the $n$-ary bracket $[\cdot,\dots,\cdot]_n$ is given by:\\
$\bullet$ If $n=4p$, then $[e_1,e_3,e_3,\dots,e_3]= b^{p}c^{p+1}e_1$ and $[e_2,e_3,e_3,\dots,e_3]= b^{p+1}c^{p}e_2$.
\\
$\bullet$ If $n=4p+1$, $[e_1,e_3,e_3,\dots,e_3]= b^{p+1}c^{p+1}e_2$ and $[e_2,e_3,e_3,\dots,e_3]= b^{p+1}c^{p+1}e_1$.
\\
$\bullet$ If $n=4p+2$, then $[e_1,e_3,e_3,\dots,e_3]= b^{p+1}c^{p+2}e_1$ and $[e_2,e_3,e_3,\dots,e_3]= b^{p+2}c^{p+1}e_2$.\\
$\bullet$ If $n=4p+3$, then $[e_1,e_3,e_3,\dots,e_3]= b^{p+2}c^{p+2}e_2$ and $[e_2,e_3,e_3,\dots,e_3]= b^{p+2}c^{p+2}e_1$.

\end{example}
\begin{proposition}
  Let $(\mathfrak{g},[\cdot ,\cdot ], \alpha)$ be a multiplicative Hom-Lie superalgebra and $\mathfrak{D}:\mathfrak{g}\to\mathfrak{g}$
  an $\alpha^k$-derivation of $\mathfrak{g}$ for an integer $k$. Then $\mathfrak{D}$ is an $\alpha^k$-derivation of $\mathfrak{g}_n$.
\end{proposition}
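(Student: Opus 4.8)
The plan is to argue by induction on the arity $n$, using the recursive definition $[x_1,\dots,x_n]_n=[[x_1,\dots,x_{n-1}]_{n-1},\alpha^{n-2}(x_n)]$ to reduce every step to the binary $\alpha^k$-derivation property that holds on $(\mathfrak g,[\cdot,\cdot],\alpha)$ by hypothesis. Concretely, I will verify the two requirements of an $\alpha^k$-derivation of $\mathfrak g_n=(\mathfrak g,[\cdot,\dots,\cdot]_n,\alpha^{n-1})$: commutation with the twisting map, and the $n$-ary twisted Leibniz rule \eqref{alphaKderiv2} with the $\alpha^k$ inserted in the untouched slots.

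Before the induction I would record two preliminary facts. First, since $\mathfrak D\circ\alpha=\alpha\circ\mathfrak D$, the operator $\mathfrak D$ commutes with every power $\alpha^j$; in particular it commutes with the twisting map $\alpha^{n-1}$ of $\mathfrak g_n$, which settles condition \eqref{alphaKderiv1}. Second, I would prove by a short induction on $m$ that each power $\alpha^k$ is a morphism for the iterated bracket, i.e. $\alpha^k([x_1,\dots,x_m]_m)=[\alpha^k(x_1),\dots,\alpha^k(x_m)]_m$; the base case is multiplicativity of $\alpha$, and the inductive step only moves $\alpha^k$ across the outer binary bracket and the factor $\alpha^{m-2}$. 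This is what lets me reinterpret $\alpha^k$-twisted arguments sitting inside an iterated bracket.

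For the main induction the base case $n=2$ is exactly the hypothesis, since $[\cdot,\cdot]_2=[\cdot,\cdot]$ and $\mathfrak g_2=\mathfrak g$. For the step I apply $\mathfrak D$ to $[x_1,\dots,x_n]_n=[[x_1,\dots,x_{n-1}]_{n-1},\alpha^{n-2}(x_n)]$ and use the binary $\alpha^k$-Leibniz rule to split it into a term $[\mathfrak D([x_1,\dots,x_{n-1}]_{n-1}),\alpha^{k+n-2}(x_n)]$ and a term $(-1)^{|\mathfrak D|\,|X|^{n-1}}[\alpha^k([x_1,\dots,x_{n-1}]_{n-1}),\alpha^{n-2}(\mathfrak D(x_n))]$, where $|X|^{n-1}=\sum_{j=1}^{n-1}|x_j|$ is the degree of the inner bracket. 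To the first term I apply the induction hypothesis in arity $n-1$ and reassemble through the recursive definition, using $\alpha^{n-2}(\alpha^k(x_n))=\alpha^{k+n-2}(x_n)$; this produces exactly the summands $i=1,\dots,n-1$. The second term, after invoking the morphism fact and $\mathfrak D\circ\alpha^{n-2}=\alpha^{n-2}\circ\mathfrak D$, is precisely the $i=n$ summand.

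The rearrangements are routine; the part that needs care is the sign bookkeeping. The Koszul sign of the $i=n$ summand must be $(-1)^{|\mathfrak D|\,|X|^{n-1}}$, and I have to check that this coincides with the sign $(-1)^{|\mathfrak D|\,|[x_1,\dots,x_{n-1}]_{n-1}|}$ generated by the second half of the binary Leibniz rule — which holds because the degree of the inner $(n-1)$-ary bracket equals $\sum_{j=1}^{n-1}|x_j|$. Keeping the powers of $\alpha$ aligned, so that the twist in the last slot is $\alpha^{k+n-2}$ on both sides, is the other point to confirm, and it follows mechanically from the recursive definition. I expect the sign matching in the $i=n$ term to be the only genuine obstacle; everything else reduces to the two preliminary lemmas.
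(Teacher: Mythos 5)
Your proof is correct and follows essentially the same route as the paper's: induction on the arity, splitting $[x_1,\dots,x_n]_n$ through its recursive definition, applying the binary $\alpha^k$-Leibniz rule, and matching the $i=n$ sign via $|[x_1,\dots,x_{n-1}]_{n-1}|=|X|^{n-1}$ together with $\mathfrak{D}\circ\alpha^{n-2}=\alpha^{n-2}\circ\mathfrak{D}$ and the morphism property of $\alpha^k$ on iterated brackets. The only cosmetic differences are that you start the induction at $n=2$ where the paper starts at $n=3$, and you state explicitly the two preliminary facts (commutation of $\mathfrak{D}$ with powers of $\alpha$, multiplicativity of $\alpha^k$ for $[\cdot,\dots,\cdot]_m$) that the paper uses tacitly.
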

\begin{proof}
We use the mathematical induction. For $n=3$, given $x,y,z\in\mathcal{H}(\mathfrak{g})$, we have
  \begin{align*}
  &\mathfrak{D}([x,y,z])=\mathfrak{D}([[x,y],\alpha(z)])\\
=&[\mathfrak{D}([x,y]),\alpha^{k+1}(z)]+(-1)^{|\mathfrak{D}||[x,y]|}[[\alpha^k(x),\alpha^k(y)],\mathfrak{D}(\alpha(z))]\\
=&[[\mathfrak{D}(x),\alpha^k(y)],\alpha^{k+1}(z)]+(-1)^{|\mathfrak{D}||x|}[[\alpha^k(x),\mathfrak{D}(y)],\alpha^{k+1}(z)] \\
 & \hspace{5cm} +(-1)^{|\mathfrak{D}|(|x|+|y|)}[[\alpha^k(x),\alpha^k(y)],\alpha(\mathfrak{D}(z))]\\
=&[\mathfrak{D}(x),\alpha^k(y),\alpha^k(z)]+(-1)^{|\mathfrak{D}||x|}[\alpha^k(x),\mathfrak{D}(y),\alpha^k(z)]
  +(-1)^{|\mathfrak{D}|(|x|+|y|)}[\alpha^k(x),\alpha^k(y),\mathfrak{D}(z)].
  \end{align*}
  Now, suppose that the property is true to order $n-1$:
  $$\mathfrak{D}([x_1,\dots,x_{n-1}]_{n-1})=
  \displaystyle\sum_{i=1}^n(-1)^{|\mathfrak{D}||X|^{i-1}}[\alpha^k(x_1),\dots,D(x_k),\dots,\alpha^k(x_{n-1})]_{n-1}.$$
 If $(x_1,\dots,x_n)\in\mathfrak{g}^n$, then
  \begin{align*}
\mathfrak{D}([x_1,\dots,x_n]_n)=&\mathfrak{D}(\Big[[x_1,\dots,x_{n-1}]_{n-1},\alpha^{n-2}(x_n)\Big])\\
=&\Big[\mathfrak{D}([x_1,\dots,x_{n-1}]_{n-1}),\alpha^{n+k-2}(x_n)\Big]\\
&+(-1)^{|\mathfrak{D}||[x_1,\dots,x_{n-1}]_{n-1}|}\Big[[\alpha^k(x_1),\dots,\alpha^k(x_{n-1})]_{n-1},\mathfrak{D}(\alpha^{n-2}(x_n))\Big]\\
=&\Big[\mathfrak{D}([x_1,\dots,x_{n-1}]_{n-1}),\alpha^{n-2}(\alpha^k(x_n))\Big]
\\&+(-1)^{|\mathfrak{D}||X|^{n-1}}\Big[[\alpha^k(x_1),\dots,\alpha^k(x_{n-1})]^{n-1},\alpha^{n-2}(\mathfrak{D}(x_n))\Big]\\
=&\displaystyle\sum_{i=1}^{n-1}(-1)^{|\mathfrak{D}||X|^{i-1}}
\Big[[\alpha^k(x_1),\dots,\mathfrak{D}(x_i),\dots,\alpha^k(x_{n-1})]_{n-1},\alpha^{n-2}(\alpha^k(x_n))\Big]\\
&+(-1)^{|\mathfrak{D}||X|^{n-1}}[\alpha^k(x_1),\dots,\alpha^k(x_{n-1}),\mathfrak{D}(x_n)]_n\\
=& \displaystyle\sum_{i=1}^{n-1}(-1)^{|\mathfrak{D}||X|^{i-1}}[\alpha^k(x_1),\dots,\mathfrak{D}(x_i),\dots,\alpha^k(x_{n-1}),\alpha^k(x_n)]_n\\
&+(-1)^{|\mathfrak{D}||X|^{n-1}}[\alpha^k(x_1),\dots,\alpha^k(x_{n-1}),\mathfrak{D}(x_n)]_n\\
=&\displaystyle\sum_{i=1}^n(-1)^{|\mathfrak{D}||X|^{i-1}}[\alpha^k(x_1),\dots,\mathfrak{D}(x_i),\dots,\alpha^k(x_{n-1}),\alpha^k(x_n)]_n,
  \end{align*}
which completes the proof.
\end{proof}
\begin{proposition}Let $(\mathfrak{g},[\cdot ,\cdot ], \alpha)$ be a multiplicative Hom-Lie superalgebra.
For endomorphisms $\mathfrak{D}$,$\mathfrak{D}'$,$\dots$,$\mathfrak{D}^{(n-1)}$ of $\mathfrak{g}$ such that
$\mathfrak{D}^{(i)} $ is an $\alpha^k$-quasiderivation with associated endomorphism
$\mathfrak{D}^{(i+1)} $ for  $0\leq i\leq n-2$, the $(n + 1)$-tuple
$(\mathfrak{D},\mathfrak{D},\mathfrak{D}',\mathfrak{D}'', \dots ,\mathfrak{D}^{(n-1)})$ is an $(n + 1)$-ary
$\alpha^k$-derivation of $\mathfrak{g}_{n}$.
\end{proposition}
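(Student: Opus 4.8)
The plan is to establish the defining identity of a generalized derivation for $\mathfrak g_n$ by induction on $n$, peeling off one argument at a time through the recursive form $[x_1,\dots,x_n]_n=[\,[x_1,\dots,x_{n-1}]_{n-1},\alpha^{n-2}(x_n)]$ of \eqref{bracketnuplet} and applying, at each step, the defining identity of a single $\alpha^k$-quasiderivation to the outer binary bracket. Writing $\mathfrak D^{(0)}=\mathfrak D$ and $T_0=T_1=\mathfrak D^{(0)}$, $T_{i-1}=\mathfrak D^{(i-2)}$ for $2\le i\le n$, what must be shown is
\[
\mathfrak D^{(n-1)}([x_1,\dots,x_n]_n)=\sum_{i=1}^{n}(-1)^{|T_{i-1}||X|^{i-1}}[\alpha^k(x_1),\dots,T_{i-1}(x_i),\dots,\alpha^k(x_n)]_n
\]
for all homogeneous $x_1,\dots,x_n$. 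Before starting I would record two simplifications. Comparing $\mathbb Z_2$-degrees in the quasiderivation identity relating $\mathfrak D^{(i)}$ to $\mathfrak D^{(i+1)}$ forces $|\mathfrak D^{(i+1)}|=|\mathfrak D^{(i)}|$, so all the maps share one parity $d:=|\mathfrak D|$ and every sign above is simply $(-1)^{d|X|^{i-1}}$. I also use that each $\mathfrak D^{(i)}$ commutes with $\alpha$, hence $\mathfrak D^{(i)}\circ\alpha^{m}=\alpha^{m}\circ\mathfrak D^{(i)}$, together with multiplicativity in the form $\alpha^k([z_1,\dots,z_{n-1}]_{n-1})=[\alpha^k(z_1),\dots,\alpha^k(z_{n-1})]_{n-1}$.

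The base case $n=2$ is immediate: here $\mathfrak g_2=\mathfrak g$ with its binary bracket, the tuple is $(\mathfrak D,\mathfrak D,\mathfrak D')$, and the displayed identity reads $\mathfrak D'([x_1,x_2])=[\mathfrak D(x_1),\alpha^k(x_2)]+(-1)^{d|x_1|}[\alpha^k(x_1),\mathfrak D(x_2)]$, which is exactly the hypothesis that $\mathfrak D=\mathfrak D^{(0)}$ is an $\alpha^k$-quasiderivation with associated endomorphism $\mathfrak D'=\mathfrak D^{(1)}$ (the case $i=0$).

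For the inductive step I assume the identity for $\mathfrak g_{n-1}$ (outer map $\mathfrak D^{(n-2)}$, slot maps $T_0,\dots,T_{n-2}$) and apply the quasiderivation identity of $\mathfrak D^{(n-2)}$, whose associated map is $\mathfrak D^{(n-1)}$, to the outer binary bracket with $u=[x_1,\dots,x_{n-1}]_{n-1}$ and $v=\alpha^{n-2}(x_n)$:
\[
\mathfrak D^{(n-1)}([x_1,\dots,x_n]_n)=[\mathfrak D^{(n-2)}(u),\alpha^k(v)]+(-1)^{d|X|^{n-1}}[\alpha^k(u),\mathfrak D^{(n-2)}(v)].
\]
In the first term I insert the induction hypothesis for $\mathfrak D^{(n-2)}(u)$ and re-fold each summand via $[\,w\,,\alpha^{n-2}(\alpha^k(x_n))]$ back into an $n$-ary bracket; since the first $n-1$ arguments and their degrees are untouched, this reproduces the summands $i=1,\dots,n-1$ of the target, with slot maps $T_0,\dots,T_{n-2}$ and signs $(-1)^{d|X|^{i-1}}$. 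In the second term I rewrite $\alpha^k(u)=[\alpha^k(x_1),\dots,\alpha^k(x_{n-1})]_{n-1}$ and $\mathfrak D^{(n-2)}(v)=\alpha^{n-2}(\mathfrak D^{(n-2)}(x_n))$, so the recursive formula turns it into $(-1)^{d|X|^{n-1}}[\alpha^k(x_1),\dots,\alpha^k(x_{n-1}),\mathfrak D^{(n-2)}(x_n)]_n$, precisely the missing $i=n$ summand with $T_{n-1}=\mathfrak D^{(n-2)}$. Adding the two contributions yields the identity for $\mathfrak g_n$.

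The bracket re-folding is routine; the step needing genuine care is the sign accounting, which here is tamed by the two observations above: the common parity $d$ collapses every degree factor to $(-1)^{d|X|^{i-1}}$, and the partial degrees $|X|^{i-1}$ for $i\le n-1$ coincide whether read in the $(n-1)$- or the $n$-element list, so the induction-hypothesis signs transfer verbatim while the quasiderivation supplies exactly the sign $(-1)^{d|X|^{n-1}}$ required for the last slot. The one structural assumption I lean on is that the quasiderivations commute with $\alpha$, as in \eqref{alphaKderiv1}; without it the identification $\mathfrak D^{(n-2)}(\alpha^{n-2}(x_n))=\alpha^{n-2}(\mathfrak D^{(n-2)}(x_n))$ breaks and the $i=n$ slot cannot be reconstructed, so I would make this commutation explicit at the outset.
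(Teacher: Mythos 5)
Your proof is correct and follows essentially the same route as the paper's: the paper likewise peels off the last argument through the recursion $[x_1,\dots,x_n]_n=\bigl[[x_1,\dots,x_{n-1}]_{n-1},\alpha^{n-2}(x_n)\bigr]$ and applies, at each stage, the quasiderivation identity of $\mathfrak{D}^{(i)}$ with associated map $\mathfrak{D}^{(i+1)}$, writing the resulting telescoping expansion with vertical dots where you formalize it as induction on $n$. Your explicit insistence that the $\mathfrak{D}^{(i)}$ commute with $\alpha$ is a sound refinement rather than a deviation: the paper's computation silently drops the $\alpha^{n-2}$ twist from the nested bracket, and restoring it shows that exactly this commutation is needed to re-fold the term $\bigl[\alpha^k\bigl([x_1,\dots,x_{n-1}]_{n-1}\bigr),\mathfrak{D}^{(n-2)}(\alpha^{n-2}(x_n))\bigr]$ into the $i=n$ summand.
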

\begin{proof}
Let $x_1,\dots,x_n\in \mathfrak{g}$, then
\begin{align*}
\mathfrak{D}^{(n-1)}([x_1,\dots,x_n]_n)&=\mathfrak{D}^{(n-1)}([[x_1,\dots,x_{n-1}]_{n-1},x_n])\\
&=[\mathfrak{D}^{(n-2)}([x_1,\dots,x_{n-1}]_{n-1}),\alpha^k(x_n)]\\
&+(-1)^{|\mathfrak{D}^{(n-2)}||X|^{n-1}}[[\alpha^k(x_1),\dots,\alpha^k(x_{n-1})]_{n-1},\mathfrak{D}^{(n-2)}(x_n)]\\
&=[[\mathfrak{D}^{(n-3)}([x_1,\dots,x_{n-2}]),\alpha^k(x_n)],\alpha^k(x_n)]\\
&+(-1)^{|\mathfrak{D}^{(n-3)}||X|^{n-2}}\Big[\Big[[\alpha^k(x_1),\dots,\alpha^k(x_{n-2})],\mathfrak{D}^{(n-3)}(x_{n-1})\Big],\alpha^k(x_n)\Big]\\
&+(-1)^{|\mathfrak{D}^{(n-2)}||X|^{n-1}}\Big[[\alpha^k(x_1),\dots,\alpha^k(x_{n-1})]_{n-1},\mathfrak{D}^{(n-2)}(x_n)\Big]\\
&\vdots\\
&=[\mathfrak{D}(x_1),\alpha^k(x_2),\dots,\alpha^k(x_n)]_n+(-1)^{|\mathfrak{D}||x_1|}[\alpha^k(x_1),\mathfrak{D}(x_2),\dots,\alpha^k(x_n)]_n\\
&+(-1)^{|\mathfrak{D}|(|x_1|+|x_2|)}[\alpha^k(x_1),\alpha^k(x_2),\mathfrak{D}'(x_3),\dots,\alpha^k(x_n)]_n\\
&+\dots+(-1)^{|\mathfrak{D}^{(n-2)}||X|^{n-1}}[\alpha^k(x_1),\dots,\alpha^{k}(x_{n-1}),\mathfrak{D}^{(n-2)}(x_n)]_n).
\end{align*}
Therefore  $(n + 1)$-tuple $(\mathfrak{D},\mathfrak{D},\mathfrak{D}',\mathfrak{D}'', \dots ,\mathfrak{D}^{(n-1)})$ is a generalized
$\alpha^k$-derivation of $\mathfrak{g}_{n}$.
\end{proof}

\section{Rota-Baxter $n$-ary Hom-Nambu superalgebras} \label{sec:rotabaxternaryhomNambusuperalg}

In this section, we introduce the notion of Rota-Baxter operators of Hom-Nambu superalgebras and $3$-Hom-pre-Lie algebras. Then we introduce the notion of a $3$-Hom-pre-Lie superalgebra which is closely related to Rota-Baxter operators of weight $0$.

\subsection{Rota-Baxter Operator on $n$-ary Hom-Nambu superalgebras}

Let $(A, \cdot,\alpha)$ be a $\mathbb{K}$-super-linear space with an even  binary operation $\cdot$ and linear map $\alpha:A\to A$ and let $\lambda\in \mathbb{K}$.
If an even linear map $R: A\to A$ satisfies, for all  $x, y \in A$,
\begin{equation}\label{RBaxterBinary}R\alpha=\alpha R,\ \
R(x)\cdot R(y)=R(R(x)\cdot y+x\cdot R(y)+\lambda x\cdot y) ,
\end{equation}
then $R$ is called a  Rota-Baxter operator of weight $\lambda$ on Hom-superalgebra $(A, \cdot, \alpha)$.

We generalize the concepts of a Rota-Baxter operator  to $n$-ary Hom-Nambu superalgebras.
\begin{definition}
 Let $\lambda\in \mathbb{K}$ and
an  $n$-ary Hom-Nambu superalgebra  $(\mathcal{N}, [\cdot , \dots, \cdot],\alpha)$. A Rota-Baxter operator of weight $\lambda$ on $(A,[\cdot , \dots, \cdot],\alpha)$ is an even linear map $R: \mathcal{N}\to \mathcal{N}$ such that $R\alpha=\alpha R$ satisfying
\begin{equation}
[ R(x_1), \dots, R(x_n)]
=R\big( \sum\limits_{\emptyset \neq I\subseteq [n]}\lambda^{|I|-1} [ \hat{R}(x_1), \dots, \hat{R}(x_i), \dots, \hat{R}(x_{n})]\big),
\end{equation}
where
$\hat{R}(x_i):=\hat{R}_I(x_i):=\left\{\begin{array}{ll} x_i, & i\in I, \\ R(x_i), & i\not\in I \end{array}\right. \text{ for all } x_1,\dots, x_n\in \mathcal{N}.
$
In particular, a  Rota-Baxter operator of weight $\lambda$  of  ternary Hom-Nambu superalgebra $(\mathcal{N},[\cdot , \dots, \cdot],\alpha)$ is  an even linear map
$R: \mathcal{N}\to \mathcal{N}$ commuting with $\alpha$ such that
\begin{eqnarray*}
[ R(x_1),R(x_2),R(x_3)]
&=& R\Big(
[ R(x_1),R(x_2),x_3] +[ R(x_1),x_2,R(x_3)] +[ x_1,R(x_2),R(x_3)]  \\
&+&\lambda [ R(x_1),x_2,x_3]
+\lambda [ x_1,R(x_2),x_3]
+\lambda [ x_1,x_2,R(x_3)]\\
&+&\lambda^2 [x_1,x_2,x_3]\Big).
\end{eqnarray*}

\end{definition}
\begin{proposition}
  Let $(\mathcal{N}, [\cdot , \dots, \cdot],\alpha)$  be a $n$-ary Hom-Nambu superalgebra over a field $\mathbb{K}$. An invertible even linear mapping $R : \mathcal{N}\to \mathcal{N}$
is a Rota-Baxter operator of weight $0$ on $\mathcal{N}$ if and only if $R^{-1}$ is an even derivation
on $\mathcal{N}$.
\end{proposition}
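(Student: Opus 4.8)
The plan is to exploit the invertibility of $R$ to convert the weight-$0$ Rota--Baxter identity into the derivation identity for $R^{-1}$ by a direct change of variables; no structural input beyond the defining relations and linear algebra is required.

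First I would specialize the Rota--Baxter relation to $\lambda = 0$. Since the summand indexed by $I \subseteq [n]$ carries the coefficient $\lambda^{|I|-1}$, every term with $|I| \geq 2$ vanishes and only the singletons $I = \{i\}$ survive. Recalling that $\hat{R}_{\{i\}}$ leaves the $i$-th entry untwisted and applies $R$ to all others, the defining relation collapses to
\[
[R(x_1),\dots,R(x_n)] = R\Big(\sum_{i=1}^n [R(x_1),\dots,R(x_{i-1}),x_i,R(x_{i+1}),\dots,R(x_n)]\Big)
\]
for all $x_1,\dots,x_n \in \mathcal{N}$, together with the commutation $R\alpha = \alpha R$.

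Next I would apply $R^{-1}$ to this collapsed identity and perform the change of variables $w_i = R(x_i)$, which is legitimate because $R$ is invertible and the $w_i$ then range over all of $\mathcal{N}$. Writing $D := R^{-1}$ and noting $x_i = D(w_i)$, the identity becomes, after renaming $w_i$ back to $x_i$,
\[
D[x_1,\dots,x_n] = \sum_{i=1}^n [x_1,\dots,x_{i-1},D(x_i),x_{i+1},\dots,x_n].
\]
Since $R$ is even, so is $D = R^{-1}$, hence $|D| = \overline 0$ and every Koszul sign $(-1)^{|D||X|^{i-1}}$ equals $1$; thus the displayed equation is exactly the untwisted ($\alpha^0$-)derivation identity. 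Moreover $R\alpha = \alpha R$ is equivalent to $D\alpha = \alpha D$, i.e. $[D,\alpha]=0$, which is the remaining requirement for $D$ to be a derivation of $\mathcal{N}$.

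Finally I would observe that every step above is an equivalence: collapsing the sum at $\lambda = 0$, applying the invertible map $R^{-1}$, and the substitution $w_i = R(x_i)$ are all reversible. Hence the converse holds verbatim: starting from the derivation identity for $D = R^{-1}$, applying $R$ and substituting $x_i \mapsto R(x_i)$ reconstructs the weight-$0$ Rota--Baxter relation, while $D\alpha=\alpha D$ gives back $R\alpha=\alpha R$. The only point that would ordinarily require care is the sign bookkeeping intrinsic to the super setting, but the evenness of $R$ trivializes all signs, so no genuine obstacle arises; the essential content is simply the invertible change of variables.
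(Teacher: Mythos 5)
Your proof is correct and follows essentially the same route as the paper: at weight $0$ the Rota--Baxter sum collapses to the singleton terms, and the substitution $X_k = R(x_k)$ followed by applying $R^{-1}$ turns that identity into the derivation identity for $R^{-1}$, with all steps reversible. Your additional remarks on the commutation $R\alpha = \alpha R \Leftrightarrow R^{-1}\alpha = \alpha R^{-1}$ and on the triviality of the Koszul signs (since $R$ is even) are points the paper leaves implicit, but they do not change the argument.
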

\begin{proof}
$R$ is an even invertible Rota-Baxter operator of weight $0$ on $\mathcal{N}$ if and only if
\begin{align*}
\forall x_1,\dots,x_n\in A: \, [R(x_1),\dots,R(x_n)]=R\Big(\displaystyle\sum_{i=1}^n[R(x_1),\dots,x_i,\dots,R(x_n)]\Big).
\end{align*}
For $X_k=R(x_k),\;k\in\{1,\dots,n\}$:
$[X_1,\dots,X_n]=R\Big(\displaystyle\sum_{i=1}^n[X_1,\dots,R^{-1}(X_i),\dots,X_n]\Big).$
Hence,
$R^{-1}([X_1,\dots,X_n])=\displaystyle\sum_{i=1}^n[X_1,\dots,R^{-1}(X_i),\dots,X_n].$
Thus $R^{-1}$ is an even derivation on $A$.
\end{proof}
\begin{proposition}
   Let $R$ be a Rota-Baxter of weight $0$ of Hom-Lie superalgebra $(\mathfrak{g},[\cdot,\cdot],\alpha)$ and $\phi\in\wedge^{n-2}\mathfrak g^*$ an even $(n-2)$-cochaine satisfying the conditions \eqref{NHomLieProduct} and \eqref{NHomLieProduct1}. Then $R$ is a Rota-Baxter operator of weight $0$ on the $n$-ary Hom-Nambu superalgebra  $(\mathfrak{g},[\cdot,\dots,\cdot]_\phi,\alpha)$  defined in \eqref{crochet_n}
 if and only if $R$ satisfies
\begin{align}
&\forall x_1,\dots,x_n\in\mathcal{H}(\mathfrak{g}): \nonumber \\
&\sum_{k<l}^{n}\Big(\sum_{i\neq k,l}^{n} \phi\big( R(x_1), \dots ,\widehat{R(x_k)}, \dots,\widehat{R(x_l)}, \dots, R(x_{n})\big)\Big)[R(x_k),R(x_l)]\in \ker(R).
\label{RotaBaxterCondition}
\end{align}
\end{proposition}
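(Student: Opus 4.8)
The plan is to expand the weight-$0$ Rota-Baxter condition for the induced bracket directly from its definition \eqref{nProduct} and to reduce it, pair by pair, to the binary weight-$0$ identity $[R(x),R(y)]=R([R(x),y]+[x,R(y)])$ already available on $(\mathfrak g,[\cdot,\cdot],\alpha)$ from \eqref{RBaxterBinary}. By Theorem \ref{HomLieSupAlToNaryHomLieSupAlg} the hypotheses \eqref{NHomLieProduct} and \eqref{NHomLieProduct1} ensure that $(\mathfrak g,[\cdot,\dots,\cdot]_\phi,\alpha)$ is an $n$-Hom-Lie superalgebra, so the statement is well posed, and $R$ is a weight-$0$ Rota-Baxter operator on it precisely when
$$[R(x_1),\dots,R(x_n)]_\phi=R\Big(\sum_{i=1}^n[R(x_1),\dots,x_i,\dots,R(x_n)]_\phi\Big).$$
First I would substitute \eqref{nProduct} into both members. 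The left-hand side becomes $\sum_{k<l}(-1)^{k+l+1}(-1)^{\gamma^X_{kl}}\phi(R(x_1),\dots,\widehat{R(x_k)},\dots,\widehat{R(x_l)},\dots,R(x_n))[R(x_k),R(x_l)]$, and the right-hand side is $R$ applied to a double sum over the distinguished index $i$ and over pairs $k<l$.

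Next I would classify the terms of the right-hand side according to whether $i$ coincides with one of $k,l$ or not. Because $R$ is even, replacing $R(x_i)$ by $x_i$ alters no degree, so the prefactor $(-1)^{k+l+1}(-1)^{\gamma^X_{kl}}$ is identical in every case, which is what makes the rearrangement manageable. When $i=k$ the bracketed slot yields $[x_k,R(x_l)]$, when $i=l$ it yields $[R(x_k),x_l]$, and in both situations the scalar cochain is evaluated on all-$R$ arguments with slots $k,l$ deleted; when $i\notin\{k,l\}$ the bracket is $[R(x_k),R(x_l)]$ while the bare $x_i$ survives inside $\phi$ at slot $i$.

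The key step is to collect, for each fixed pair $(k,l)$, the two contributions coming from $i=k$ and $i=l$. Since $\phi$ is $\mathbb{K}$-valued and $R$ is $\mathbb{K}$-linear the scalar cochain factor passes through $R$, leaving $R([x_k,R(x_l)]+[R(x_k),x_l])=[R(x_k),R(x_l)]$ by the binary weight-$0$ identity; these reassemble exactly into the $(k,l)$ term of the left-hand side. Hence the difference of the two members equals $-R$ applied to the sum over pairs $k<l$, with the common prefactor, of the surviving $i\notin\{k,l\}$ terms, namely
$$\sum_{k<l}(-1)^{k+l+1}(-1)^{\gamma^X_{kl}}\Big(\sum_{i\neq k,l}\phi(R(x_1),\dots,x_i,\dots,\widehat{R(x_k)},\dots,\widehat{R(x_l)},\dots,R(x_n))\Big)[R(x_k),R(x_l)].$$
Therefore the Rota-Baxter identity for $[\cdot,\dots,\cdot]_\phi$ holds if and only if this residual sum lies in $\ker(R)$, which is precisely condition \eqref{RotaBaxterCondition}.

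The step I expect to be the main obstacle is the sign and index bookkeeping: one must check that the Koszul signs $(-1)^{\gamma^X_{kl}}$ attached to the $i=k$ and $i=l$ contributions genuinely coincide, so that the two commutators add with a common prefactor into $[x_k,R(x_l)]+[R(x_k),x_l]$, and that no term is counted twice in passing from the ordered sum over $i$ to the pairwise grouping. Once the signs are reconciled, the remainder of the argument is a direct application of the linearity of $R$ and the binary weight-$0$ Rota-Baxter identity.
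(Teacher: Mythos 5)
Your proposal is correct and follows essentially the same route as the paper's own proof: expand both sides of the weight-$0$ Rota-Baxter identity via \eqref{nProduct}, split the right-hand side according to whether the distinguished index $i$ equals $k$, $l$, or neither, pull the scalar factor $\phi(\cdots)$ through $R$ by linearity, and recombine the $i=k$ and $i=l$ contributions into $[R(x_k),R(x_l)]$ using the binary identity \eqref{RBaxterBinary}, so that the difference of the two sides is exactly $R$ applied to the residual $i\notin\{k,l\}$ sum. Your remark that the evenness of $R$ makes the Koszul prefactors agree across the three cases is precisely the sign bookkeeping the paper leaves implicit.
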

\begin{proof}
For $X=(x_1,\dots,x_n)\in\mathcal{H}(\mathfrak{g})^{\otimes n}$,
\begin{align*}
&[R(x_1),\dots,R(x_n)]_{\phi}= \\=&\displaystyle\sum_{k<l}(-1)^{|k|+|l|+1}(-1)^{\gamma_{kl}^X}
\phi(R(x_1),\dots,\widehat{R(x_k)},\dots,\widehat{R(x_l)},\dots,R(x_n))[R(x_k),R(x_l)]\\
=&\displaystyle\sum_{k<l}(-1)^{|k|+|l|+1}(-1)^{\gamma_{kl}^X}
\phi\Big(R(x_1),\dots,\widehat{R(x_k)},\dots,\widehat{R(x_l)},\dots,R(x_n)\Big)([R(x_k),x_l]+[x_k,R(x_l)])
\end{align*}
On the other hand,
\begin{align*}
&R\Big(\displaystyle\sum_{i=1}^n[R(x_1),\dots,x_i,\dots,R(x_n)]_{\phi}\Big)=\\=&
R\Big(\displaystyle\sum_{i=1}^n\displaystyle\sum_{k<l;k,l\neq i}(-1)^{k+l+1}(-1)^{\gamma_{kl}^X}
\phi(R(x_1),\dots,\widehat{R(x_k)},\dots,x_i,\dots,\widehat{R(x_l)},\dots,R(x_n)) \Big. \\
& \hspace{12cm} \Big. [R(x_k),R(x_l)]\Big)\\
+&R\Big(\displaystyle\sum_{i=1}^n\displaystyle\sum_{k<i}(-1)^{k+i+1}(-1)^{\gamma_{ki}^X}
\phi(R(x_1),\dots,\widehat{R(x_k)},\dots,\widehat{x_i},\dots,R(x_n))[R(x_k),x_i]\Big)\\
+&R\Big(\displaystyle\sum_{i=1}^n\displaystyle\sum_{i<k}(-1)^{k+i+1}(-1)^{\gamma_{ik}^X}
\phi(R(x_1),\dots,\widehat{x_i},\dots,\widehat{R(x_k)},\dots,R(x_n))[x_i,R(x_k)]\Big)
\end{align*}
It is easy to see that
\begin{align*}
& R\Big(\displaystyle\sum_{i=1}^n[R(x_1),\dots,x_i,\dots,R(x_n)]_{\phi}\big)-[R(x_1),\dots,R(x_n)]_{\phi}=\\
& R\Big(\displaystyle\sum_{i=1}^n\displaystyle\sum_{\substack{k<l \\ k,l\neq i}}(-1)^{k+l+1}(-1)^{\gamma_{kl}^X}
\phi(R(x_1),\dots,\widehat{R(x_k)},\dots,x_i,\dots,\widehat{R(x_l)},\dots,R(x_n))[R(x_k),R(x_l)]\Big)
\end{align*}
Then,  $R$ is a Rota-Baxter operator on the $n$-ary Hom-Nambu superalgebra  $(\mathfrak{g},[\cdot,\dots,\cdot]_\phi,\alpha)$  defined in \eqref{crochet_n}
 if and only if
\begin{multline*}
R\Big(\displaystyle\sum_{i=1}^n\displaystyle\sum_{k<l;k,l\neq i}(-1)^{k+l+1}(-1)^{\gamma_{kl}^X}
\phi(R(x_1),\dots,\widehat{R(x_k)},\dots,x_i,\dots,\widehat{R(x_l)},\dots,R(x_n))\Big. \\
\Big. [R(x_k),R(x_l)]\Big)=0,
\end{multline*}
which gives
\begin{multline*} \displaystyle\sum_{k<l}^{n}(-1)^{k+l+1}(-1)^{\gamma_{kl}^X}\Big(\sum_{i\neq k,l}^{n} \phi\big( R(x_1), \dots ,\widehat{R(x_k)}, \dots, x_i, \dots,\widehat{R(x_l)}, \dots, R(x_{n})\big)\Big. \\
[R(x_k),R(x_l)]\in \ker(R).
\end{multline*}
\end{proof}
\begin{proposition}
   A Rota-Baxter $R$ of weight $0$ of Hom-Lie superalgebra $(\mathfrak{g},[\cdot,\cdot],\alpha)$ is a Rota-Baxter operator on the associated $n$-ary Hom-Nambu algebra    $(\mathfrak{g},[\cdot,\dots,\cdot]_n,\alpha^{n-2})$  defined in \eqref{bracketnuplet}.
\end{proposition}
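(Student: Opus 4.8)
The plan is to prove the statement by induction on the arity $n$, exploiting the recursive definition $[x_1,\dots,x_n]_n=[[x_1,\dots,x_{n-1}]_{n-1},\alpha^{n-2}(x_n)]$ coming from \eqref{bracketnuplet}. Since $R$ commutes with $\alpha$ it commutes with every power of $\alpha$, so compatibility of $R$ with the structure map of the induced algebra is immediate, and the only substantive thing to verify is the weight-$0$ $n$-ary Rota-Baxter identity
\begin{equation*}
[R(x_1),\dots,R(x_n)]_n=R\Big(\sum_{i=1}^n[R(x_1),\dots,R(x_{i-1}),x_i,R(x_{i+1}),\dots,R(x_n)]_n\Big).
\end{equation*}
The base case $n=2$ is exactly the hypothesis that $R$ is a weight-$0$ Rota-Baxter operator on $(\mathfrak{g},[\cdot,\cdot],\alpha)$, since $\mathfrak{g}_2=(\mathfrak{g},[\cdot,\cdot],\alpha)$.

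For the inductive step I would assume the identity holds at arity $n-1$. Writing $B=\sum_{i=1}^{n-1}[R(x_1),\dots,x_i,\dots,R(x_{n-1})]_{n-1}$, the induction hypothesis reads $[R(x_1),\dots,R(x_{n-1})]_{n-1}=R(B)$. Using the recursion together with $R\alpha=\alpha R$, I compute
\begin{align*}
[R(x_1),\dots,R(x_n)]_n&=\big[[R(x_1),\dots,R(x_{n-1})]_{n-1},\alpha^{n-2}(R(x_n))\big]\\
&=\big[R(B),R(\alpha^{n-2}(x_n))\big].
\end{align*}
Applying the binary weight-$0$ Rota-Baxter relation \eqref{RBaxterBinary} to the pair $(B,\alpha^{n-2}(x_n))$ gives
\begin{equation*}
\big[R(B),R(\alpha^{n-2}(x_n))\big]=R\big([R(B),\alpha^{n-2}(x_n)]+[B,R(\alpha^{n-2}(x_n))]\big).
\end{equation*}
The first inner bracket equals $[[R(x_1),\dots,R(x_{n-1})]_{n-1},\alpha^{n-2}(x_n)]=[R(x_1),\dots,R(x_{n-1}),x_n]_n$, which is precisely the $i=n$ summand. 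Expanding the second inner bracket by bilinearity and using $R(\alpha^{n-2}(x_n))=\alpha^{n-2}(R(x_n))$, one gets $[B,\alpha^{n-2}(R(x_n))]=\sum_{i=1}^{n-1}[R(x_1),\dots,x_i,\dots,R(x_{n-1}),R(x_n)]_n$, supplying the summands $i=1,\dots,n-1$. Collecting both contributions inside $R$ yields the desired identity at arity $n$.

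The reassuring feature of the weight-$0$ case is that there are no super-sign complications to track: the relation \eqref{RBaxterBinary} is a sign-free bilinear identity, and the weight-$0$ $n$-ary Rota-Baxter sum carries no Koszul signs either, so no parity factors appear during the matching of terms. Consequently I expect the main obstacle to be purely organizational rather than computational, namely identifying the last factor of the recursion with the $i=n$ term and recognizing the induction hypothesis hidden inside the remaining summands. The only points requiring genuine care are the use of $R\alpha=\alpha R$ to move $R$ through $\alpha^{n-2}$, and the observation that $B$ is homogeneous of degree $|x_1|+\dots+|x_{n-1}|$ because $R$ is even and hence parity-preserving, which is what makes applying \eqref{RBaxterBinary} to the pair $(B,\alpha^{n-2}(x_n))$ legitimate.
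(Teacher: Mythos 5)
Your proof is correct and follows essentially the same route as the paper's: induction on the arity via the recursion $[x_1,\dots,x_n]_n=\big[[x_1,\dots,x_{n-1}]_{n-1},\alpha^{n-2}(x_n)\big]$, the commutation $R\alpha^{p}=\alpha^{p}R$, and the binary weight-$0$ identity \eqref{RBaxterBinary} combined with the induction hypothesis read in both directions. The only cosmetic differences are that you apply \eqref{RBaxterBinary} once to the aggregated sum $B$ where the paper applies it summand-by-summand and then recombines, and that you start the induction at $n=2$, which lets you skip the paper's separate $n=3$ base-case computation.
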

\begin{proof}It easy to show  that $\alpha^pR=R\alpha^p$ for any integer $p$.
We use the mathematical induction on the integer $n\geq3$:\\
i) For $n=3$:
Let $x,y,z\in\mathcal{H}(\mathfrak{g})$, we have:
\begin{align*}
& [R(x),R(y),R(z)]_3 = [[R(x),R(y)],\alpha(R(z))] = \\
& =[R([R(x),y]),R(\alpha(z))]+[R([x,R(y)]),R(\alpha(z))]\\
& =R([R([R(x),y]),\alpha(z)])+R([[R(x),y],R(\alpha(z))]) \\
& +R([R([x,R(y)]),\alpha(z)])+R([[x,R(y)],R(\alpha(z))])\\
& =R([R([R(x),y]),\alpha(z)])+R([R([x,R(y)]),\alpha(z)]) \\
& +R([R(x),y,R(z)]_3)+R([x,R(y),R(\alpha(z))]_3)\\
& =R([R([R(x),y])+R([x,R(y)]),\alpha(z)])\\
& +R([R(x),y,R(z)]_3)+R([x,R(y),R(z)]_3)\\
& =R([R(x),R(y),z]_3)+R([R(x),y,R(z)]_3)+R([x,R(y),R(z)]_3)\\
& =[R(x),R(y),R(z)]_3
\end{align*}
ii) Assume the property is true to order $n>3$, that is:
\begin{align*}
& \forall (x_1,\dots,x_{n-1})\in\mathcal{H}(\mathfrak{g})^{\otimes n-1}: \\
& [R(x_1),\dots,R(x_{n-1})]_{n-1}=R\Big(\displaystyle\sum_{i=1}^{n-1}[R(x_1),\dots,x_i,\dots,R(x_{n-1})]_{n-1}\Big).
\end{align*}
For $(x_1,\dots,x_n)\in\mathcal{H}(\mathfrak{g})^{\otimes n}$,
\begin{align*}
&[R(x_1),\dots,R(x_n)]_n=[[R(x_1),\dots,R(x_{n-1})]_{n-1},\alpha^{n-2}(R(x_n))]\\
&=\displaystyle\sum_{i=1}^{n-1}\Big[R([R(x_1),\dots,x_i,\dots,R(x_{n-1})]_{n-1}),R(\alpha^{n-2}(x_n))\Big]\\
&=R\Big(\displaystyle\sum_{i=1}^{n-1}\Big[[R(x_1),\dots,x_i,\dots,R(x_{n-1})]_{n-1},\alpha^{n-2}(R(x_n))\Big]\Big)\\
&+R\Big(\displaystyle\sum_{i=1}^{n-1}\Big[R([R(x_1),\dots,x_i,\dots,R(x_{n-1})]_{n-1}),\alpha^{n-2}(x_n)\Big]\Big)\\
&=R\Big(\displaystyle\sum_{i=1}^{n-1}[R(x_1),\dots,x_i,\dots,R(x_{n-1}),R(x_n)]_n\Big)\\&+
R\Big(\Big[\displaystyle\sum_{i=1}^{n-1}R([R(x_1),\dots,x_i,\dots,R(x_{n-1})]_{n-1}),\alpha^{n-2}(x_n)\Big]\Big)\\
&=R\Big(\displaystyle\sum_{i=1}^{n-1}[R(x_1),\dots,x_i,\dots,R(x_{n-1}),R(x_n)]_n\Big)+
R\big([[R(x_1),\dots,R(x_{n-1})]_{n-1},\alpha^{n-2}(x_n)]\Big)\\
&=R\Big(\displaystyle\sum_{i=1}^{n-1}[R(x_1),\dots,x_i,\dots,R(x_{n-1}),R(x_n)]_n\Big)+
R\big([R(x_1),\dots,R(x_{n-1}),x_n]_n\Big)\\
&=R\Big(\displaystyle\sum_{i=1}^n[R(x_1),\dots,x_i,R(x_n)]_n\Big).
\end{align*}
The theorem is proved.
\end{proof}
\subsection{$3$-Hom-pre-Lie superalgebras}
\

\vskip0.2cm

In this subsection,  we generalize the notion of a $3$-Hom-pre-Lie
algebra introduced in \cite{BaiGuoSheng:BialgYangBaxtereqManintri3Liealg} to the super case, which is closely related to Rota Baxter operators. In particular,
there is a construction of  $3$-Hom-pre-Lie superalgebras obtained from
$3$-Hom-Lie superalgebras.

\begin{definition}
A triple $(A,\{\cdot,\cdot,\cdot\},\alpha)$, consisting of a linear super-space $A$ and two even linear maps $\{\cdot,\cdot,\cdot\}:A\otimes A\otimes A\to A$ and $\alpha:A\to A$, is called a {\it $3$-Hom-pre-Lie superalgebra} if the following identities hold:
\begin{align}
\{x,y,z\}&=-(-1)^{|x||y|}\{y,x,z\},\label{eq:d1}\\
\nonumber \{\alpha(x_1),\alpha(x_2),\{x_3,x_4,x_5\}\} &=\{[x_1,x_2,x_3]_C,\alpha(x_4),\alpha(x_5)\} \\
\nonumber &+(-1)^{|x_3|(|x_1|+|x_2|)}\{\alpha(x_3),[x_1,x_2,x_4]_C,\alpha(x_5)\}\\
&+(-1)^{(|x_1|+|x_2|)(|x_3|+|x_4|)}\{\alpha(x_3),\alpha(x_4),\{x_1,x_2,x_5\}\},\label{eq:d2}\\
\nonumber \{ [x_1,x_2,x_3]_C,\alpha(x_4), \alpha(x_5)\} &=\{\alpha(x_1),\alpha(x_2),\{ x_3,x_4, x_5\}\} \\
\nonumber &+(-1)^{|x_1|(|x_2|+|x_3|)}\{\alpha(x_2),\alpha(x_3),\{ x_1,x_4,x_5\}\}\\
&+(-1)^{|x_3|(|x_1|+|x_2|)}\{\alpha(x_3),\alpha(x_1),\{ x_2,x_4, x_5\}\},\label{eq:d3}
\end{align}
 where $x,y,z, x_i\in\mathcal{H}( A), 1\leq i\leq 5$ and $[\cdot,\cdot,\cdot]_C$ is called $3$-supercommutator and  defined by
\begin{align}
& \forall  x,y,z\in \mathcal{H}(A): \nonumber \\ %
& [x,y,z]_C=\{x,y,z\}+(-1)^{|x|(|y|+|z|)}\{y,z,x\}+(-1)^{|z|(|x|+|y|)}\{z,x,y\}.\label{eq:3cc}
\end{align}
\end{definition}

\begin{proposition}\label{PreLieSupAlgToLieSupAlg}
Let $(A,\{\cdot,\cdot,\cdot\},\alpha)$ be a $3$-Hom-pre-Lie superalgebra. Then the induced $3$-supercommutator in \eqref{eq:3cc} and the linear map $\alpha$ define a $3$-Hom-Lie superalgebra on $A$.
\end{proposition}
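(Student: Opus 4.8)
The plan is to verify directly the two defining axioms of a $3$-Hom-Lie superalgebra for the pair $([\cdot,\cdot,\cdot]_C,\alpha)$: super-skew-symmetry \eqref{SuperSkewSym1} of the $3$-supercommutator \eqref{eq:3cc}, and the super-Hom-Nambu (Filippov) identity \eqref{NambuIdentity} specialised to $n=3$ with $\alpha_1=\alpha_2=\alpha$. All the input is the partial skew-symmetry \eqref{eq:d1} and the two compatibility conditions \eqref{eq:d2}, \eqref{eq:d3}.

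First I would establish super-skew-symmetry. Since \eqref{eq:3cc} defines $[\cdot,\cdot,\cdot]_C$ as a signed cyclic symmetrisation of $\{\cdot,\cdot,\cdot\}$, a direct sign count gives the cyclic relation $[y,z,x]_C=(-1)^{|x|(|y|+|z|)}[x,y,z]_C$. Skew-symmetry under the transposition of the first two slots, $[x,y,z]_C=-(-1)^{|x||y|}[y,x,z]_C$, then follows by applying \eqref{eq:d1} to each of the three summands of \eqref{eq:3cc} and matching them termwise. Because a $3$-cycle together with the transposition $(1\,2)$ generate $S_3$ with the expected Koszul signs, these two relations yield the full super-skew-symmetry \eqref{SuperSkewSym1}.

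The substantive step is the $n=3$ identity, whose left-hand side is $[\alpha(x_1),\alpha(x_2),[x_3,x_4,x_5]_C]_C$ and whose right-hand side is $[[x_1,x_2,x_3]_C,\alpha(x_4),\alpha(x_5)]_C+(-1)^{(|x_1|+|x_2|)|x_3|}[\alpha(x_3),[x_1,x_2,x_4]_C,\alpha(x_5)]_C+(-1)^{(|x_1|+|x_2|)(|x_3|+|x_4|)}[\alpha(x_3),\alpha(x_4),[x_1,x_2,x_5]_C]_C$. I would expand every occurrence of $[\cdot,\cdot,\cdot]_C$ via its cyclic definition \eqref{eq:3cc}. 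Whenever a summand places an inner product $\{x_k,x_l,x_m\}$ in the third slot with $\alpha$'s in the first two, I apply \eqref{eq:d2}; whenever it places a commutator $[\cdot,\cdot,\cdot]_C$ in some slot, I use \eqref{eq:d1} to move it into the first slot and then apply \eqref{eq:d3}. Iterating these reductions expresses both sides as signed sums of the basic monomials $\{\alpha(x_i),\alpha(x_j),\{x_k,x_l,x_m\}\}$, and the identity reduces to the assertion that the two resulting sums agree monomial by monomial.

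The main obstacle is the Koszul sign bookkeeping rather than the combinatorial skeleton, which is identical to the ungraded statement proved in \cite{BaiGuoSheng:BialgYangBaxtereqManintri3Liealg}. Each reordering -- coming from the cyclic symmetrisation in \eqref{eq:3cc}, from \eqref{eq:d1}, or from the inner/outer degree factors $(-1)^{|X||Y|^{i-1}}$ in \eqref{NambuIdentity} -- contributes a sign depending on the homogeneous degrees, and one must confirm that these are mutually consistent on each monomial. The prefactors $(-1)^{|x_3|(|x_1|+|x_2|)}$ and $(-1)^{(|x_1|+|x_2|)(|x_3|+|x_4|)}$ appearing in \eqref{eq:d2} and \eqref{eq:d3} are precisely those needed for this matching: after the substitutions the spurious cross terms cancel in pairs and the surviving terms assemble exactly into the right-hand side of the Filippov identity, so that $([\cdot,\cdot,\cdot]_C,\alpha)$ is super-skew-symmetric and satisfies \eqref{NambuIdentity}, hence is a $3$-Hom-Lie superalgebra.
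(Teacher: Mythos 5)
Your proposal is correct and follows essentially the same route as the paper: super-skew-symmetry of $[\cdot,\cdot,\cdot]_C$ is deduced from \eqref{eq:d1} together with the graded cyclic structure of \eqref{eq:3cc}, and the $n=3$ super-Hom-Nambu identity is verified by expanding all supercommutators via \eqref{eq:3cc} and cancelling the resulting monomials using \eqref{eq:d2} and \eqref{eq:d3}. The paper's proof is exactly this expansion written out term by term (with the final cancellation likewise asserted rather than displayed), so your plan, including the sign bookkeeping caveat, reproduces it faithfully.
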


\begin{proof} By \eqref{eq:d1}, the induced $3$-supercommutator $[\cdot,\cdot,\cdot]_C$ in \eqref{eq:3cc} is super-skew-symmetric. For $x_1,x_2,x_3,x_4,x_5\in \mathcal{H}(A)$,
\begin{align*}
&[\alpha(x_1),\alpha(x_2),[x_3,x_4,x_5]_C]_C-[[x_1,x_2,x_3]_C,\alpha(x_4),\alpha(x_5)]_C\\
&\hspace{3cm} -(-1)^{|x_3|(|x_1|+|x_2|)}[\alpha(x_3),[x_1,x_2,x_4]_C,\alpha(x_5)]_C\\
&\hspace{5cm} -(-1)^{(|x_1|+|x_2|)(|x_3|+|x_4|)}[\alpha(x_3),\alpha(x_4),[ x_1,x_2, x_5]_C]_C\\
&=\{\alpha(x_1),\alpha(x_2),\{x_3,x_4,x_5\}\}+(-1)^{|x_3|(|x_4|+|x_5|)}\{\alpha(x_1),\alpha(x_2),\{x_4,x_5,x_3\}\}\\
&+(-1)^{|x_5|(|x_3|+|x_4|)}\{\alpha(x_1),\alpha(x_2),\{x_5,x_3,x_4\}\}\\
&+(-1)^{|x_1|(|x_2|+|x_3|+|x_4|+|x_5|)}\{\alpha(x_2),[x_3,x_4,x_5]_C, \alpha(x_1)\}\\
&+(-1)^{(|x_1|+|x_2|)(|x_3|+|x_4|+|x_5|)}\{[x_3,x_4,x_5]_C, \alpha(x_1), \alpha(x_2)\}\\
&-\{[x_1,x_2,x_3]_C,\alpha(x_4),\alpha(x_5)\} \\
&-(-1)^{(|x_1|+|x_2|+|x_3|)(|x_4|+|x_5|)}\{\alpha(x_4),\alpha(x_5),\{x_1,x_2,x_3\}\}\\
&-(-1)^{(|x_1|+|x_2|)(|x_3|+|x_4|+|x_5|)}(-1)^{|x_1|(|x_2|+|x_3|)}\{\alpha(x_4),\alpha(x_5),\{x_2,x_3,x_1\}\}\\
&-(-1)^{(|x_1|+|x_2|)(|x_3|+|x_4|+|x_5|)}(-1)^{|x_3|(|x_1|+|x_2|)}\{\alpha(x_4),\alpha(x_5),\{x_3,x_1,x_2\}\}\\
&-(-1)^{|x_5|(|x_1|+|x_2|+|x_3|+|x_4|+|x_5|)}\{\alpha(x_5),[x_1,x_2,x_3]_C,\alpha(x_4)\}\\
&-(-1)^{|x_3|(|x_1|+|x_2|)}\{\alpha(x_3),[x_1,x_2,x_4]_C,\alpha(x_5)\} \\
&-(-1)^{|x_3|(|x_4|+|x_5|)}\{[x_1,x_2,x_4]_C,\alpha(x_5),\alpha(x_3)\}\\
&-(-1){|x_5|(|x_1|+|x_2|+|x_3|+|x_4|)}(-1)^{|x_3|(|x_1|+|x_2|)}\{\alpha(x_5),\alpha(x_3),\{x_1,x_2,x_4\}\}\\
&-(-1)^{|x_5|(|x_1|+|x_2|+|x_3|+|x_4|)}(-1)^{|x_1|(|x_2|+|x_4|)+|x_3|(|x_1|+|x_2|)}\{\alpha(x_5),\alpha(x_3),\{x_2,x_4,x_1\}\}\\
&-(-1)^{|x_5|(|x_1|+|x_2|+|x_3|+|x_4|)}(-1)^{|x_4|(|x_1|+|x_2|)+|x_3|(|x_1|+|x_2|)}\{\alpha(x_5),\alpha(x_3),\{x_4,x_1,x_2\}\}\\
&-(-1)^{|x_3|(|x_4|+|x_5|)+|x_4|(|x_1|+|x_2|)}\{\alpha(x_4),[ x_1,x_2, x_5]_C,\alpha(x_3)\} \\
&-(-1)^{|x_5|(|x_3|+|x_4|)}\{[ x_1,x_2, x_5]_C,\alpha(x_3),\alpha(x_4)\}\\
&-(-1)^{(|x_1|+|x_2|)(|x_3|+|x_4|)}\{\alpha(x_3),\alpha(x_4),\{ x_1,x_2, x_5\}\}\\
&-(-1)^{(|x_1|+|x_2|)(|x_3|+|x_4|+|x_5|)}\{\alpha(x_3),\alpha(x_4),\{ x_5,x_1, x_2\}\}\\
&-(-1)^{|x_1|(|x_2|+|x_5|)}(-1)^{(|x_1|+|x_2|)(|x_3|+|x_4|)}\{\alpha(x_3),\alpha(x_4),\{ x_2,x_5, x_1\}\}=0.
\end{align*}
when applying identities \eqref{eq:d2} and \eqref{eq:d3}. Thus the proof is completed.
\end{proof}

\begin{definition}
Let $(A,\{\cdot,\cdot,\cdot\},\alpha)$ be a $3$-Hom-pre-Lie superalgebra. The $3$-Hom-Lie superalgebra $(A,[\cdot,\cdot,\cdot]_C,\alpha)$
is called the {\it sub-adjacent $3$-Hom-Lie superalgebra} of $(A,\{\cdot,\cdot,\cdot\},\alpha)$ and $(A,\{\cdot,\cdot,\cdot\},\alpha)$ is called a {\it compatible
$3$-Hom-pre-Lie superalgebra} of the $3$-Hom-Lie superalgebra $(A,[\cdot,\cdot,\cdot]_C,\alpha)$.
\end{definition}

New identities of $3$-pre-Hom-Lie superalgebras can be derived from Proposition~\ref{PreLieSupAlgToLieSupAlg}.
\begin{cor}
Let $(A,\{\cdot,\cdot,\cdot\},\alpha)$ be a $3$-Hom-pre-Lie algebra. The following identities hold:
\begin{multline*}
\{ [x_1,x_2,x_3]_C,\alpha(x_4), \alpha(x_5)\}-(-1)^{|x_3||x_4|}\{[x_1,x_2,x_4]_C,\alpha(x_3), \alpha(x_5) \}\\
+(-1)^{|x_2|(|x_3|+|x_4|)}\{[x_1,x_3,x_4]_C,\alpha(x_2), \alpha(x_5)\} \\
-(-1)^{|x_1|(|x_2|+|x_3|+|x_4|)}\{[x_2,x_3,x_4]_C,\alpha(x_1), \alpha(x_5)\}=0,
\end{multline*}
\begin{multline*}
\{\alpha(x_1),\alpha(x_2),\{x_3,x_4, x_5\}\}+(-1)^{(|x_1|+|x_2|)(|x_3|+|x_4|)}\{\alpha(x_3),\alpha(x_4),\{x_1,x_2, x_5\}\} \\
+(-1)^{|x_1|(|x_2|+|x_3|+|x_4|)+|x_3||x_4|}\{\alpha(x_2),\alpha(x_4),\{x_3,x_1, x_5\}\} \\
+(-1)^{|x_3|(|x_1|+|x_2|)}\{\alpha(x_3),\alpha(x_1),\{x_2,x_4, x_5\}\}\\
+(-1)^{|x_1|(|x_2|+|x_3|)}\{\alpha(x_2),\alpha(x_3),\{x_1,x_4, x_5\}\} \\
+(-1)^{|x_4|(|x_2|+|x_3|)}\{\alpha(x_1),\alpha(x_4),\{x_2,x_3, x_5\}\} =0,
\end{multline*}
for $x_i\in \mathcal{H}(A), 1\leq i\leq 5$.
\end{cor}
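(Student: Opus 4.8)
The plan is to derive both identities directly from the three defining axioms \eqref{eq:d1}, \eqref{eq:d2}, \eqref{eq:d3} of a $3$-Hom-pre-Lie superalgebra, in the same spirit as the proof of Proposition~\ref{PreLieSupAlgToLieSupAlg}. The organizing observation is that, since $[x_1,x_2,x_3]_C$ is super-skew-symmetric, each expression $\{[x_1,x_2,x_3]_C,\alpha(x_4),\alpha(x_5)\}$ is super-skew-symmetric in $x_1,x_2,x_3$, and by \eqref{eq:d3} it expands into the three ``atoms'' $\{\alpha(x_i),\alpha(x_j),\{x_k,x_l,x_5\}\}$ whose inner entry $x_l$ is the fixed argument. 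Every atom is normalized, up to a Koszul sign, by \eqref{eq:d1} (skew in the outer pair $i,j$ and in the inner pair $k,l$), so after reduction each atom is labelled by the unordered partition $\{i,j\}\mid\{k,l\}$ of $\{1,2,3,4\}$. I would first record how \eqref{eq:d1}, \eqref{eq:d2} and \eqref{eq:d3} act on these atoms, after which everything becomes bookkeeping organized by partition rather than by the term of origin.

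For the second identity I would argue as follows. Applying \eqref{eq:d3} to $\{[x_1,x_2,x_3]_C,\alpha(x_4),\alpha(x_5)\}$ reproduces exactly three of its six summands (those with inner entry $x_4$). Applying \eqref{eq:d3} to $\{[x_1,x_2,x_4]_C,\alpha(x_3),\alpha(x_5)\}$ and reordering the inner and outer pairs by \eqref{eq:d1} accounts for two further summands, at the cost of reintroducing the atom $\{\alpha(x_1),\alpha(x_2),\{x_3,x_4,x_5\}\}$. The single remaining summand $\{\alpha(x_3),\alpha(x_4),\{x_1,x_2,x_5\}\}$ is then tied to that same atom by \eqref{eq:d2} (using \eqref{eq:d1} to move the inner commutator into the first slot). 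Substituting, the two commutator expressions $\{[x_1,x_2,x_3]_C,\alpha(x_4),\alpha(x_5)\}$ and $\{[x_1,x_2,x_4]_C,\alpha(x_3),\alpha(x_5)\}$ cancel in pairs and the six-term sum collapses to $0$.

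For the first identity I would substitute \eqref{eq:d3} into each of its four terms, namely the brackets built on the triples $(x_1,x_2,x_3)$, $(x_1,x_2,x_4)$, $(x_1,x_3,x_4)$ and $(x_2,x_3,x_4)$, producing twelve atoms. After normalizing each atom by \eqref{eq:d1}, they distribute over the three partitions of $\{1,2,3,4\}$ into two pairs, with two atoms per orientation; collecting them shows that the alternating sum equals precisely twice the left-hand side of the second identity (the coefficients, including all super signs, match exactly), and hence vanishes by the previous step. Alternatively one can run the same partition-bookkeeping argument for the first identity directly, reducing it to two instances of \eqref{eq:d3} together with \eqref{eq:d2}.

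The routine part is the algebra itself; the only genuine difficulty is the Koszul-sign bookkeeping in the super setting. Each transposition of an inner pair $\{x_k,x_l,x_5\}\to\{x_l,x_k,x_5\}$ or of an outer pair contributes a factor $-(-1)^{|x_k||x_l|}$, and one must verify that these, multiplied with the explicit coefficients occurring in \eqref{eq:d2} and \eqref{eq:d3}, reproduce exactly the signs $(-1)^{|x_3||x_4|}$, $(-1)^{|x_2|(|x_3|+|x_4|)}$, $(-1)^{|x_1|(|x_2|+|x_3|+|x_4|)}$ and $(-1)^{(|x_1|+|x_2|)(|x_3|+|x_4|)}$ appearing in the statement. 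Grouping the computation by the partition to which an atom belongs, rather than by the summand that produced it, is what keeps this sign tracking under control, and it is the step I would carry out most carefully.
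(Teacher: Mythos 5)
Your proposal is correct, and in fact it supplies a proof where the paper gives none: the corollary is stated without proof, preceded only by the remark that the identities ``can be derived from Proposition~\ref{PreLieSupAlgToLieSupAlg}''. Your route bypasses the sub-adjacent $3$-Hom-Lie superalgebra entirely and argues straight from the axioms \eqref{eq:d1}, \eqref{eq:d2}, \eqref{eq:d3}, which is the same computational style as the paper's proof of Proposition~\ref{PreLieSupAlgToLieSupAlg} itself; what your approach buys is a self-contained verification with the Koszul signs made explicit, which is exactly what the paper omits.

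I checked the two claims your plan rests on, and both hold. Write $A_{ij,kl}:=\{\alpha(x_i),\alpha(x_j),\{x_k,x_l,x_5\}\}$. For the second identity: substitute into \eqref{eq:d2} the \eqref{eq:d3}-expansion of $\{[x_1,x_2,x_3]_C,\alpha(x_4),\alpha(x_5)\}$, and, after using \eqref{eq:d1} to move $[x_1,x_2,x_4]_C$ into the first slot (the prefactor $(-1)^{|x_3|(|x_1|+|x_2|)}$ then becomes $-(-1)^{|x_3||x_4|}$), the \eqref{eq:d3}-expansion of $\{[x_1,x_2,x_4]_C,\alpha(x_3),\alpha(x_5)\}$ with its atoms renormalized by \eqref{eq:d1}; e.g.\ $-(-1)^{|x_3||x_4|}A_{12,43}=+A_{12,34}$, and the atoms $A_{24,13}$, $A_{41,23}$ acquire exactly the coefficients $(-1)^{|x_1|(|x_2|+|x_3|+|x_4|)+|x_3||x_4|}$ and $(-1)^{|x_4|(|x_2|+|x_3|)}$ of the statement. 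One of the two resulting copies of $A_{12,34}$ cancels the left-hand side of \eqref{eq:d2}, the other survives as the leading term, and the remainder is precisely the six-term identity. (Minor wording point: the commutator expressions are eliminated by substitution rather than ``cancelling in pairs''; the genuine cancellation is the one just described.) For the first identity: expanding its four terms by \eqref{eq:d3} and normalizing by \eqref{eq:d1} produces each of the six oriented-partition atoms exactly twice with equal signs --- for instance $A_{12,34}$ appears with coefficient $+1$ from both the $[x_1,x_2,x_3]_C$ and the $[x_1,x_2,x_4]_C$ terms, and $A_{34,12}$ with coefficient $(-1)^{(|x_1|+|x_2|)(|x_3|+|x_4|)}$ from both the $[x_1,x_3,x_4]_C$ and the $[x_2,x_3,x_4]_C$ terms --- so the alternating sum equals twice the left-hand side of the second identity, hence vanishes; since this is a multiplication by $2$ rather than a division, no characteristic hypothesis is even consumed. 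Your proof is complete once these sign checks are written out.
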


\begin{proposition}\label{pro:3preLieT}
Let $(A,[\cdot,\cdot,\cdot],\alpha)$ be a $3$-Hom-Lie superalgebra and  $R:A\to A$ is an operator Rota-baxter of weight $0$. Then there exists a $3$-Hom-pre-Lie superalgebra structure on $A$ given by
\begin{equation}
\{x,y,z\}=[R(x),R(y),z],\quad\forall ~ x,y,z\in \mathcal{H}(A).
\end{equation}
\end{proposition}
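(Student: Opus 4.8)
The plan is to verify the three defining identities \eqref{eq:d1}, \eqref{eq:d2}, \eqref{eq:d3} for the even product $\{x,y,z\}=[R(x),R(y),z]$ in that order; \eqref{eq:d1} is immediate, while \eqref{eq:d2} and \eqref{eq:d3} both rest on a single auxiliary relation. Since $R$ is even, $|R(x)|=|x|$, so the super-skew-symmetry \eqref{SuperSkewSym} of the ternary bracket in its first two slots gives $\{x,y,z\}=[R(x),R(y),z]=-(-1)^{|x||y|}[R(y),R(x),z]=-(-1)^{|x||y|}\{y,x,z\}$, which is \eqref{eq:d1}.

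The key step is the auxiliary relation $[R(x),R(y),R(z)]=R([x,y,z]_C)$, where $[\cdot,\cdot,\cdot]_C$ is the $3$-supercommutator \eqref{eq:3cc}. I would obtain it from the weight-$0$ Rota-Baxter condition $[R(x),R(y),R(z)]=R\big([R(x),R(y),z]+[R(x),y,R(z)]+[x,R(y),R(z)]\big)$ by rewriting each term inside $R$ as a pre-Lie product: the first term is $\{x,y,z\}$ by definition, and for the terms $[R(x),y,R(z)]$ and $[x,R(y),R(z)]$ I would move the two $R$'s into adjacent positions using \eqref{SuperSkewSym} and then apply \eqref{eq:d1}, turning them into $(-1)^{|z|(|x|+|y|)}\{z,x,y\}$ and $(-1)^{|x|(|y|+|z|)}\{y,z,x\}$ respectively. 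Their sum is exactly \eqref{eq:3cc}, establishing the relation. This is the only place where the Rota-Baxter hypothesis is used.

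With this relation available, \eqref{eq:d2} follows almost directly. Using $R\alpha=\alpha R$ and the definition, its left side $\{\alpha(x_1),\alpha(x_2),\{x_3,x_4,x_5\}\}$ equals $[\alpha(R(x_1)),\alpha(R(x_2)),[R(x_3),R(x_4),x_5]]$, which is precisely the left side of the super-Hom-Nambu identity \eqref{NambuIdentity} (for $n=3$) applied to $R(x_1),R(x_2)$ and $R(x_3),R(x_4),x_5$. Expanding the right side of \eqref{NambuIdentity}, replacing $[R(x_1),R(x_2),R(x_3)]$ and $[R(x_1),R(x_2),R(x_4)]$ by $R([x_1,x_2,x_3]_C)$ and $R([x_1,x_2,x_4]_C)$, and repackaging via the definition of $\{\cdot,\cdot,\cdot\}$ reproduces the three terms on the right of \eqref{eq:d2} with the stated signs.

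For \eqref{eq:d3} I would first use the auxiliary relation to write its left side as $[[R(x_1),R(x_2),R(x_3)],\alpha(R(x_4)),\alpha(x_5)]$, then apply \eqref{NambuIdentity} with operating pair $R(x_4),x_5$ acting on $[R(x_1),R(x_2),R(x_3)]$, after a cyclic reordering of the outer bracket via \eqref{SuperSkewSym}. This yields a three-term sum in which each inner bracket has the form $[R(x_4),x_5,R(x_i)]$; a cyclic swap turns it into $[R(x_i),R(x_4),x_5]=\{x_i,x_4,x_5\}$, and moving the surviving factor into the third slot by \eqref{SuperSkewSym} produces the cyclic sum on the right of \eqref{eq:d3}. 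The only real difficulty is the Koszul-sign bookkeeping here, since each reordering contributes a sign governed by the degree of the moved factor. The organizing observation is that $x_4$ and $x_5$ travel together as a fixed pair, so in each term the exponents containing $|x_4|+|x_5|$ occur with even multiplicity and cancel modulo $2$, leaving exactly the prescribed exponents $|x_1|(|x_2|+|x_3|)$ and $|x_3|(|x_1|+|x_2|)$ among $x_1,x_2,x_3$. Once this cancellation pattern is recorded, \eqref{eq:d1}, \eqref{eq:d2} and \eqref{eq:d3} are all verified, and since $\{\cdot,\cdot,\cdot\}$ and $\alpha$ are even this exhibits $(A,\{\cdot,\cdot,\cdot\},\alpha)$ as a $3$-Hom-pre-Lie superalgebra.
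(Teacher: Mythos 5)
Your proposal is correct and follows essentially the same route as the paper: establish \eqref{eq:d1} from super-skew-symmetry, derive the key relation $[R(x),R(y),R(z)]=R([x,y,z]_C)$ from the weight-$0$ Rota-Baxter identity together with \eqref{eq:3cc}, and then obtain \eqref{eq:d2} and \eqref{eq:d3} by applying the super-Hom-Nambu identity \eqref{NambuIdentity} (with acting pairs $(R(x_1),R(x_2))$ and $(R(x_4),x_5)$ respectively) after rewriting all terms in bracket form via $R\alpha=\alpha R$. Your sign bookkeeping for \eqref{eq:d3}, including the cancellation of all exponents involving $|x_4|+|x_5|$, checks out and in fact spells out details the paper leaves implicit in its phrase ``by super-Nambu identity.''
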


\begin{proof} Let $x,y,z\in \mathcal{H}(A)$.
It is obvious that
$$\{x,y,z\}=[R(x),R(y),z]=-(-1)^{|x||y|}[R(y),R(x),z]=-(-1)^{|x||y|}\{y,x,z\}.$$
Furthermore, the following equation holds:
$$
[x,y,z]_C=[R(x),R(y),z]+(-1)^{|z|(|x|+|y|)}[R(z),R(x),y]+(-1)^{|x|(|y|+|z|)}[R(y),R(z),x].
$$
Since $R$ is a Rota-Baxter operator, we have
$$
R([x,y,z]_C)=[R(x),R(y),R(z)].
$$
For $x_1, x_2,x_3,x_4,x_5\in \mathcal{H}(A)$,
\begin{align*}
\{\alpha(x_1),\alpha(x_2),\{x_3,x_4,x_5 \}\}=&[R(\alpha(x_1)),R(\alpha(x_2)),[R(x_3),R(x_4),x_5]]\\
=&[\alpha(R(x_1)),\alpha(R(x_2)),[R(x_3),R(x_4),x_5]];\\
\{[x_1,x_2,x_3]_C,\alpha(x_4),\alpha(x_5)\}=&[R([x_1,x_2,x_3]_C),R(\alpha(x_4)),\alpha(x_5)]\\
=&[[R(x_1),R(x_2),R(x_3)],\alpha(R(x_4)),\alpha(x_5)];\\
\{\alpha(x_3),[x_1,x_2,x_4]_C,x_5\}=&[R(\alpha(x_3)),R([x_1,x_2,x_4]_C),\alpha(x_5)]\\
=&[\alpha(R(x_3)),[R( x_1),R(x_2),R(x_4)],\alpha(x_5)];\\
\{\alpha(x_3),\alpha(x_4),\{x_1,x_2,x_5\}\}=&[R(\alpha(x_3)),R(\alpha(x_4)),[R(x_1),R(x_2),x_5]]\\
=&\alpha(R(x_3)),\alpha(R(x_4)),[R(x_1),R(x_2),x_5]].
\end{align*}
By Condition \eqref{NambuIdentity}, \eqref{eq:d2} holds. On the other hand, we have
\begin{align*}
\{[x_1,x_2,x_3]_C,\alpha(x_4), \alpha(x_5)\}=&[R([x_1,x_2,x_3]_C),R(\alpha(x_4)),\alpha(x_5)]\\
=&[[R(x_1),R(x_2),R(x_3)],\alpha(R(x_4)),\alpha(x_5)];\\
\{\alpha(x_1),\alpha(x_2),\{x_3,x_4, x_5\}\}=&[R(\alpha(x_1)),R(\alpha(x_2)),[R(x_3),R(x_4),x_5]]\\
=&[\alpha(R(x_1)),\alpha(R(x_2)),[R(x_3),R(x_4),x_5];\\
\{\alpha(x_2),\alpha(x_3),\{x_1,x_4, x_5\}\}=&[R(\alpha(x_2)),R(\alpha(x_3)),[R(x_1),R(x_4),x_5]]\\
=&[\alpha(R(x_2)),\alpha(R(x_3)),[R(x_1),R(x_4),x_5];\\
\{\alpha(x_3),\alpha(x_1),\{x_2,x_4, x_5\}\}=&[R(\alpha(x_3)),R(\alpha(x_1)),[R(x_2),R(x_4),x_5]]\\
=&[\alpha(R(x_3)),\alpha(R(x_1)),[R(x_2),R(x_4),x_5]].
\end{align*}
By super-Nambu identity, \eqref{eq:d3} holds. This completes the proof.
\end{proof}

\begin{cor}
With the above conditions,  $(A,[\cdot,\cdot,\cdot]_C,\alpha)$ is a $3$-Hom-Lie
superalgebra as the sub-adjacent $3$-Hom-Lie superalgebra of the $3$-Hom-pre-Lie
superalgebra given in Proposition \ref{pro:3preLieT}, and $R$ is a $3$-Hom-Lie superalgebra morphism from $(A,[\cdot,\cdot,\cdot]_C,\alpha)$ to $(A,[\cdot,\cdot,\cdot],\alpha)$. Furthermore,
$R(A)=\{R(x)|x\in A\}\subset A$ is a $3$-Hom-Lie super-subalgebra of $A$ and there is an induced $3$-Hom-pre-Lie superalgebra structure $(\{\cdot,\cdot,\cdot\}_{R(A)},\alpha)$ on
$R(A)$ given by
\begin{equation}
\{R(x),R(y),R(z)\}_{R(A)}:=R(\{x,y,z\}),\quad\;\forall x,y,z\in \mathcal{H}(A).
\end{equation}
\end{cor}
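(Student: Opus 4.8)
The plan is to assemble the four assertions from results already established, leaving only the induced structure on $R(A)$ to require a genuinely new verification. For the first assertion, I note that Proposition~\ref{pro:3preLieT} already endows $A$ with the $3$-Hom-pre-Lie superalgebra structure $\{x,y,z\}=[R(x),R(y),z]$; applying Proposition~\ref{PreLieSupAlgToLieSupAlg} to this structure immediately yields that $(A,[\cdot,\cdot,\cdot]_C,\alpha)$ is a $3$-Hom-Lie superalgebra, which is its sub-adjacent one by definition.

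For the second assertion, I would reuse the identity $R([x,y,z]_C)=[R(x),R(y),R(z)]$ derived inside the proof of Proposition~\ref{pro:3preLieT} from the weight-$0$ Rota-Baxter relation. Together with the hypothesis $R\alpha=\alpha R$, this says exactly that $R$ is a morphism of $3$-Hom-Lie superalgebras from $(A,[\cdot,\cdot,\cdot]_C,\alpha)$ to $(A,[\cdot,\cdot,\cdot],\alpha)$. The third assertion is then a one-line consequence: for any $R(x),R(y),R(z)\in R(A)$ one has $[R(x),R(y),R(z)]=R([x,y,z]_C)\in R(A)$, and $\alpha(R(x))=R(\alpha(x))\in R(A)$, so $R(A)$ is closed under both the bracket and $\alpha$ and is thus a $3$-Hom-Lie super-subalgebra.

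The substance lies in the fourth assertion, and the step I expect to be the main obstacle is \emph{well-definedness} of $\{R(x),R(y),R(z)\}_{R(A)}:=R(\{x,y,z\})=R([R(x),R(y),z])$. In the first two arguments there is nothing to check, since the value depends on $x,y$ only through $R(x),R(y)$. For the third argument I would show that $w\in\ker R$ forces $R([R(x),R(y),w])=0$: since $R$ is even, $\ker R$ is a graded subspace, and substituting $x_3=w$ into the weight-$0$ Rota-Baxter relation $[R(x),R(y),R(w)]=R([R(x),R(y),w]+[R(x),y,R(w)]+[x,R(y),R(w)])$ and using $R(w)=0$ collapses the right-hand side to $R([R(x),R(y),w])$ while the left-hand side vanishes. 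Hence the operation descends consistently to $R(A)$.

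Once well-definedness is secured, the remaining verifications are formal. By construction $R$ is a surjective even linear map satisfying $R(\{x,y,z\})=\{R(x),R(y),R(z)\}_{R(A)}$ and $R\alpha=\alpha R$, so each of the multilinear identities \eqref{eq:d1}, \eqref{eq:d2}, \eqref{eq:d3} valid for $\{\cdot,\cdot,\cdot\}$ on $A$ transports under $R$ to the corresponding identity for $\{\cdot,\cdot,\cdot\}_{R(A)}$ on $R(A)$; in particular the induced $3$-supercommutator satisfies $[R(x),R(y),R(z)]_C=R([x,y,z]_C)=[R(x),R(y),R(z)]$, confirming that the sub-adjacent $3$-Hom-Lie superalgebra of $(\{\cdot,\cdot,\cdot\}_{R(A)},\alpha)$ is exactly the subalgebra structure inherited from $A$. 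This completes the proof plan.
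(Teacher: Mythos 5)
Your proposal is correct. The paper states this corollary without any proof, treating it as an immediate consequence of Proposition~\ref{pro:3preLieT} (whose proof already contains the key identity $R([x,y,z]_C)=[R(x),R(y),R(z)]$) together with Proposition~\ref{PreLieSupAlgToLieSupAlg}, and your first three assertions follow exactly this intended route: sub-adjacency from Proposition~\ref{PreLieSupAlgToLieSupAlg}, the morphism property from the displayed identity plus $R\alpha=\alpha R$, and closure of $R(A)$ as a one-line consequence. Where you go beyond the paper is the fourth assertion: since $R$ is \emph{not} assumed invertible here (in contrast with Proposition~\ref{pro:preLieOoper}), the formula $\{R(x),R(y),R(z)\}_{R(A)}:=R(\{x,y,z\})$ genuinely requires the well-definedness check you supply, and your argument --- substituting $x_3=w\in\ker R$ into the weight-zero Rota-Baxter identity, so that the left-hand side $[R(x),R(y),R(w)]$ vanishes while the right-hand side collapses to $R([R(x),R(y),w])$ --- is precisely the right, and the only non-formal, step; dependence on the first two arguments only through $R(x),R(y)$ is indeed automatic from $\{x,y,z\}=[R(x),R(y),z]$. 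The transport of identities \eqref{eq:d1}--\eqref{eq:d3} along the surjection $R:A\to R(A)$ then works as you describe, because $R$ is even (so all Koszul signs are preserved) and intertwines both ternary products and $\alpha$, and your closing observation that the sub-adjacent bracket of $\{\cdot,\cdot,\cdot\}_{R(A)}$ coincides with the restriction of $[\cdot,\cdot,\cdot]$ to $R(A)$ is a correct and clarifying supplement. In short, your write-up is not merely consistent with the paper's (omitted) argument; it fills a real gap that the paper leaves implicit.
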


\begin{proposition}\label{pro:preLieOoper}
Let $(A,[\cdot,\cdot,\cdot],\alpha)$ be a $3$-Hom-Lie superalgebra. Then there exists a compatible $3$-Hom-pre-Lie superalgebra if and only if there exists an invertible Rota-Baxter operator $R$ on $A$.
\end{proposition}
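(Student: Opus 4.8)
The plan is to prove the two implications separately, using Proposition~\ref{pro:3preLieT} and its accompanying corollary for the backward direction and the left-multiplication representation for the forward one. For the backward implication, suppose $R$ is an invertible Rota-Baxter operator of weight $0$. By Proposition~\ref{pro:3preLieT} the product $\{x,y,z\}_0=[R(x),R(y),z]$ makes $A$ a $3$-Hom-pre-Lie superalgebra, and by the corollary to that proposition $R$ is an isomorphism of $3$-Hom-Lie superalgebras from its sub-adjacent algebra onto $(A,[\cdot,\cdot,\cdot],\alpha)$. Since $R$ is bijective and commutes with $\alpha$, I would transport $\{\cdot,\cdot,\cdot\}_0$ along $R$ and set $\{x,y,z\}:=R\big(\{R^{-1}(x),R^{-1}(y),R^{-1}(z)\}_0\big)=R\big([x,y,R^{-1}(z)]\big)$. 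As the image of a $3$-Hom-pre-Lie structure under an $\alpha$-equivariant bijection, this is again a $3$-Hom-pre-Lie superalgebra, so the only real point of this direction is to check that it is \emph{compatible}, i.e. that its induced $3$-supercommutator \eqref{eq:3cc} equals $[\cdot,\cdot,\cdot]$.

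To establish compatibility I would expand $[x,y,z]_C$ from \eqref{eq:3cc} into the three cyclic terms $R([x,y,R^{-1}(z)])$, $(-1)^{|x|(|y|+|z|)}R([y,z,R^{-1}(x)])$ and $(-1)^{|z|(|x|+|y|)}R([z,x,R^{-1}(y)])$, and then use the super-skew-symmetry \eqref{SuperSkewSym1} to move the $R^{-1}$-entry of the second and third terms back into first and middle position respectively. Each move produces a sign that squares the already present cyclic sign, so both double factors cancel and the sum collapses to $R\big([x,y,R^{-1}(z)]+[R^{-1}(x),y,z]+[x,R^{-1}(y),z]\big)$. Applying the weight-$0$ Rota-Baxter identity to the triple $R^{-1}(x),R^{-1}(y),R^{-1}(z)$ identifies this expression with $[x,y,z]$, giving $[\cdot,\cdot,\cdot]_C=[\cdot,\cdot,\cdot]$ as required.

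For the forward implication, assume a compatible $3$-Hom-pre-Lie superalgebra $(A,\{\cdot,\cdot,\cdot\},\alpha)$ is given. I would introduce the left-multiplication maps $L(x,y)\in\mathrm{End}(A)$ defined by $L(x,y)z=\{x,y,z\}$ and first verify that $L$ is a representation of the sub-adjacent $3$-Hom-Lie superalgebra on $A$: this is exactly what the identities \eqref{eq:d2} and \eqref{eq:d3}, together with the super-skew-symmetry \eqref{eq:d1}, encode. Then the identity map $\mathrm{id}_A$ is an invertible operator whose defining relation $[x,y,z]=\{x,y,z\}+(-1)^{|x|(|y|+|z|)}\{y,z,x\}+(-1)^{|z|(|x|+|y|)}\{z,x,y\}$ is precisely the compatibility \eqref{eq:3cc}; this exhibits $\mathrm{id}_A$ as an invertible (relative) Rota-Baxter operator associated with $L$, which is the operator sought.

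The step I expect to be the main obstacle is the forward direction: the operator that arises naturally is a relative Rota-Baxter operator with respect to the left-multiplication representation $L$, not a Rota-Baxter operator for the bracket itself (the adjoint representation in \eqref{NambuIdentity}), so the delicate points are to confirm that $L$ genuinely satisfies the representation axioms and to reconcile this relative operator with the notion of Rota-Baxter operator on $A$ while keeping every super-sign under control. By contrast, in the backward direction the only substantive work is the sign bookkeeping in collapsing the cyclic sum through \eqref{SuperSkewSym1}, which is routine but error-prone.
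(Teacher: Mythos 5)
Your backward implication (invertible Rota--Baxter operator $\Rightarrow$ compatible $3$-Hom-pre-Lie superalgebra) is correct and is essentially the paper's own proof: the paper likewise takes the structure $\{x,y,z\}=[R(x),R(y),z]$ from Proposition~\ref{pro:3preLieT}, transports it along $R$ to $\{x,y,z\}_A=R\{R^{-1}(x),R^{-1}(y),R^{-1}(z)\}=R([x,y,R^{-1}(z)])$, and verifies compatibility by exactly the computation you outline, namely applying the weight-$0$ Rota--Baxter identity to the triple $R^{-1}(x),R^{-1}(y),R^{-1}(z)$ and rewriting the three resulting terms cyclically via super-skew-symmetry so that they match the $3$-supercommutator \eqref{eq:3cc}.

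The forward implication is where your proposal has a genuine gap, and you have in fact located it yourself but underestimated it: the operator your construction produces is $\mathrm{id}_A$ viewed as a \emph{relative} Rota--Baxter ($\mathcal{O}$-)operator with respect to the left-multiplication representation $L(x,y)z=\{x,y,z\}$ of the sub-adjacent algebra, and this cannot be ``reconciled'' with the paper's notion of a Rota--Baxter operator on $A$. With the paper's definition, which is taken relative to the bracket itself, $R=\mathrm{id}_A$ satisfies the weight-$0$ identity only if $[x,y,z]=3[x,y,z]$ for all arguments, i.e.\ only if the bracket vanishes; so the identity map is essentially never a Rota--Baxter operator on $A$, and no sign bookkeeping or verification of the representation axioms for $L$ can repair this. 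You should also know that the paper does not close this gap either: its proof consists solely of the backward direction, and the ``only if'' half of the stated equivalence is never addressed. In the ungraded original \cite{BaiGuoSheng:BialgYangBaxtereqManintri3Liealg}, the equivalence is stated for invertible $\mathcal{O}$-operators with respect to an arbitrary representation, and there the forward direction is precisely your argument (the identity map as an $\mathcal{O}$-operator for $L$); once ``$\mathcal{O}$-operator'' is replaced by ``Rota--Baxter operator on $A$,'' as in this paper, the forward implication no longer follows by this route, and the paper offers no substitute. So your diagnosis of where the difficulty lies is accurate, but the difficulty is not a delicate verification to be carried out --- it is that the statement, as phrased, demands a stronger object than the construction can yield.
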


\begin{proof}
Let $R$ be an invertible Rota-Baxter operator of $A$. Then there exists a $3$-Hom-pre-Lie superalgebra structure $(\{x,y,z\},\alpha)$ on $A$
defined by
$$\{x,y,z\}=ad_{(R(x),R(y))}(z),\quad \forall x,y,z\in \mathcal{H}(A).$$
Moreover, there is an induced $3$-Hom-pre-Lie superalgebra structure $(\{\cdot,\cdot,\cdot\}_A,\alpha)$ on $A=R(A)$ given by
$$\{x,y,z\}_A=R\{R^{-1}(x),R^{-1}(y),R^{-1}(z)\}=R(ad_{(x,y)}(R^{-1}(z)))$$
for all $x,y,z\in \mathcal{H}(A)$. Since $R$ is an operator Rota-Baxter on $A$, we have
\begin{align*}
&[x,y,z]=R\Big([x,y,R^{-1}(z)]+[x,R^{-1}(y),z]+[R^{-1}(x),y,z]\Big)\\
&=R\Big(ad_{(x,y)}(R^{-1}(z))+(-1)^{|z|(|x|+|y|)}ad_{(z,x)}(R^{-1}(y))+(-1)^{|x|(|y|+|z|)}ad_{(y,z)}(R^{-1}(x))\Big)\\
&=\{x,y,z\}_A+(-1)^{|z|(|x|+|y|)}\{z,x,y\}_A+(-1)^{|x|(|y|+|z|)}\{y,z,x\}_A
\end{align*}
 Therefore $(A,\{\cdot,\cdot,\cdot\}_A,\alpha)$ is a compatible $3$-Hom-pre-Lie superalgebra of $(A,[\cdot,\cdot,\cdot])$.
\end{proof}

\section*{Acknowlegments} Dr. Sami Mabrouk is grateful to the research environment in Mathematics and
Applied Mathematics MAM, the Division of Applied Mathematics
of the School of Education, Culture and Communication
at M{\"a}lardalen University for hospitality and an excellent and inspiring environment for research and research education and cooperation in Mathematics during his visit in Autumn 2019.

\end{document}